\newtheorem{lemma}{Lemma}[section]
\newtheorem{thm}[lemma]{Theorem}
\newtheorem{cor}[lemma]{Corollary}
\newtheorem{defn}[lemma]{Definition}
\newtheorem{thm*}{Theorem}
\newtheorem{conj}[lemma] {Conjecture}
\newenvironment{customthm}[1]
  {\innercustomthm}
  {\endinnercustomthm}
\theoremstyle{definition}
\theoremstyle{remark}
\numberwithin{equation}{section}
\newcommand{\homeo}{\stackrel{\cong}{\to}}
\newcommand{\T}{\mathbf{T}}
\renewcommand{\S}{\mathbf{S}}
\newcommand{\Z}{\mathbb{Z}}
\newcommand{\R}{\mathbb{R}}
\renewcommand{\v}{\mathbf{v}}
\newcommand{\w}{\mathbf{w}}
\renewcommand{\P}{\mathcal{P}}
\newcommand{\Q}{\mathcal{Q}}
\newcommand{\so}{\Rightarrow}
\renewcommand{\iff}{\Leftrightarrow}
\newcommand{\N}{\mathbb{N}}
\newcommand{\p}{\mathbf{p}}
\newcommand{\x}{\mathbf{x}}
\newcommand{\h}{\mathbf{h}}
\newcommand{\e}{\mathbf{e}}
\newcommand{\0}{\mathbf{0}}
\newcommand{\C}{\textsf{{C}}}
\renewcommand{\u}{\mathbf{u}}
\newcommand{\y}{\mathbf{y}}
\newcommand{\ep}{\epsilon}
\newcommand{\Etilde}{\widetilde{E}}
\newcommand{\Ebar}{\overline{E}}
\newcommand{\Ehat}{\widehat{E}}
\renewcommand{\h}{\mathbf{h}}
\renewcommand{\k}{\mathbf{k}}
\newcommand{\actson}{\curvearrowright}
\newcommand{\gothG}{\mathfrak{G}}
\newcommand{\gothH}{\mathfrak{H}}
\newcommand{\Qtilde}{\widetilde{\mathcal{Q}}}
\newcommand{\Qbar}{\overline{\mathcal{Q}}}
\newcommand{\Ptilde}{\widetilde{\mathcal{P}}}
\begin{document}

\title[Topological speedups of $\Z^d$-actions]{Topological speedups of $\Z^d$-actions}

\author[A.S.A. Johnson]{Aimee S. A. Johnson$^1$} 

\thanks{$^1$Corresponding author:  Department of Mathematics and Statistics, Swarthmore College, 500 College Ave.
Swarthmore, PA 19081; email:  \texttt{aimee@swarthmore.edu}}

\author[D. McClendon]{ and David M. McClendon$^2$}

\thanks{$^2$Department of Mathematics and Computer Science, Ferris State University, ASC 2021, Big Rapids, MI 49307; email:  \texttt{mcclend2@ferris.edu}}



\date{February 23, 2021}

\maketitle

\begin{abstract}
We study minimal $\mathbb{Z}^d$-Cantor systems and the relationship between their speedups, their collections of invariant Borel measures, their associated unital dimension groups, and their orbit equivalence classes.  In the particular case of minimal $\mathbb{Z}^d$-odometers, we 
show that their bounded speedups must again be odometers but, contrary to the 1-dimensional case, they need not be conjugate, or even isomorphic, to the original.  
\end{abstract}

%
%

\section{Introduction}

Speedups of measurable dynamical systems have been studied since the 1969 work of Neveu \cite{N,Neveu}.  In this context, the object under consideration is a Lebesgue probability space $(X,\mathcal{X},\mu)$ with an ergodic, measure-preserving transformation $T : X \to X$, and by its ``speedup'' we mean a transformation $T^{p}$ where  $p:X \to \Z^+$.  Neveu characterized exactly which functions $p$ lead to $T^p$ being bijective, and proved a version of Abramov's formula relating the entropy of $T^p$ to the entropy of $T$.  In a seminal paper of 1985, Arnoux, Ornstein, and Weiss \cite{AOW} showed that if $T$ and $S$ are any two ergodic, measure-preserving automorphisms, there is a speedup of $T$ that is measurably conjugate to $S$.  In other words, if the integral of $p$ is no object, then one can speed up $T$ to ``look like" $S$.  This (trivial) classification of ergodic transformations up to ``speedup equivalence'' has the same flavor as work of Dye \cite{Dye, Dye2} in which he proved that all ergodic, measure-preserving automorphisms are (measurably) orbit equivalent.  

The work of Arnoux, Ornstein and Weiss was generalized to ergodic compact group extensions by Babichev, Burton, and Fieldsteel in 2011 \cite{BBF}, and to actions of commuting transformations (i.e. ``higher-dimensional'' actions) by the authors in 2014 \cite{JM1} and 2015 \cite{JM2}.

In this paper we consider speedups in the topological category.  This was first done by Ash \cite{Ash} in 2016 when he studied systems of the form $(X,T)$ where $X$ is a minimal Cantor space and $T: X \to X$ is a homeomorphism.  Similar to how the result of \cite{AOW} reflects Dye's Theorem, Ash's results are closely tied to fundamental results about topological orbit equivalence proved by Giordano, Putnam, and Skau \cite{GPS}.  In particular, the last authors showed that orbit equivalence for such an $(X,T)$ is governed by a unital ordered dimension group that can be associated to the system.  They proved two such systems are orbit equivalent if and only if these dimension groups are isomorphic, and also if and only if there is a homeomorphism between the phase spaces of the systems that induces a bijection between their sets of invariant Borel measures.  Ash's work similarly relates these objects to speedups.  For instance, he showed that one minimal Cantor system is a speedup of another if and only if a surjection between their unital ordered dimension groups exists, and this is in turn equivalent to the existence of a homeomorphism between the phase spaces which induces an injection on the sets of invariant Borel measures. 

This work was continued in a 2018 paper by Alvin, Ash, and Ormes \cite{AAO}, with the additional assumption that the speedup function $p$ is bounded.   They studied the family of minimal Cantor systems given by odometers and showed there is little freedom for their speedups: a minimal bounded speedup of an odometer must be a conjugate odometer.

 In this paper, we study these topological notions in the context of actions of $\Z^d$.   Section 2 provides further background for our work.  In Section 3, we relate speedups to invariant measures for the actions and to the orbit equivalence theory of Giordano, Putnam and Skau.  This section culminates with a series of results which we summarize here:

\begin{customthm}{A}  Suppose $(X_1, \T_1)$ is a minimal $\Z^{d_1}$-Cantor system and $(X_2, \T_2)$ is a minimal $\Z^{d_2}$-Cantor system.  If there is a speedup of $\T_1$ conjugate to $\T_2$, then:
\begin{enumerate}
\item there is a homeomorphism $F : X_1 \to X_2$ which induces an injective transformation from the set of $\T_1$-invariant measures to the set of $\T_2$-invariant measures; and
\item there is a surjective group homomorphism from the dimension group of $\T_2$ to the dimension group of $\T_1$ which preserves the positive cones and order units of those groups.\end{enumerate}
\end{customthm}

\begin{customthm}{B}  Suppose $(X_1, \T_1)$ is a $\Z^{d_1}$-odometer and $(X_2, \T_2)$ is a $\Z^{d_2}$-odometer.  If there is a speedup of $\T_1$ conjugate to $\T_2$, then $\T_1$ and $\T_2$ are orbit equivalent.
\end{customthm}

We prove the individual statements of these theorems in Lemma \ref{maintoptheorempart2}, Theorem \ref{mainodometertheorem}, and Theorem \ref{maintoptheorempart1}.

Section 4 addresses a partial converse of Theorem B for the case where $d_2=1$:

\begin{customthm}{C} 
Suppose $(X_1, \T_1)$ is a $\Z^{d_1}$-odometer and $(X_2, T_2)$ is a $\Z$-odometer.  If $\T_1$ and $T_2$ are orbit equivalent, then 
 for any cone $\C \subseteq \Z^{d_1}$, there is a $\C$-speedup $S$ of $\T_1$ that is topologically conjugate to $T_2$. 
\end{customthm}

Finally, in Section 5 we describe what is meant by a ``bounded'' speedup, and we discuss the properties of those speedups in the setting of 
$\Z^d$-odometers.  We prove in Theorems \ref{bddspeedupthm} and \ref{notconjugate}  the following results, which show that in some ways, bounded speedups of $\Z^d$-odometers are similar to the 
one dimensional setting (statement 1) but in other ways they are not (statement 2).

\begin{customthm}{D} 
Suppose $(X, \T)$ is a free $\Z^{d_1}$-odometer.  If $\S : \Z^{d_2} \actson X$ is a minimal bounded speedup of $\T$, then:
\begin{enumerate}
\item  $(X,\S)$ is a free $\Z^{d_2}$-odometer, but
\item $(X,\S)$ is not  necessarily conjugate to $(X, \T)$.
\end{enumerate}
\end{customthm}

We conclude by showing, via Theorem  \ref{orbteqvandspeedups} and Corollary \ref{nospeedup}, that the choice of cone $\C$ impacts 
whether one can obtain a bounded speedup of one $\Z^d$-odometer which is conjugate to a second:

\begin{customthm}{E} 
\color{white} Z
\color{black}
\begin{enumerate}
\item  For any two $\Z^d$-odometers that are continuously orbit equivalent, there is a cone $\C \subseteq \Z^{d}$ such that one of the odometers is conjugate to a 
$\C$-speedup of the other.
\item There exists $\Z^2$-odometers that are continuously orbit equivalent and a cone $\C \subseteq \Z^{2}$ such that no bounded $\C$-speedup of one odometer is conjugate to the other.
\end{enumerate}
\end{customthm}

%
%

\section{Terminology}


\subsection{Dynamical systems}

We begin with some standard definitions from topological dynamics.  First, given a group $G$ and a topological space $X$, we say that $(X,\T)$ is a \textbf{$G$-action}, and write $\T : G \actson X$, if for every $g \in G$, there is a homeomorphism $\T^g : X \to X$, and these homeomorphisms satisfy $\T^{gh} = \T^g \circ \T^h$ for every $g, h \in G$ and also that $\T^{0}(x) = x$ for all $x \in X$, where $0$  denotes the identity element of $G$.
In this paper, we concern ourselves with actions where $G = \Z^d$ for some $d$, and will henceforth only give definitions in this setting.  However, the ideas presented in this section apply to actions of more general groups as well.

Given $\T : \Z^d \actson X$ and $x \in X$, the \textbf{orbit} of $x$ is the set $\{\T^\v(x) : \v \in \Z^d\}$.  A $\Z^d$-action $(X,\T)$ is called \textbf{free} if, for any $x \in X$, $\T^\v(x) = x$ implies $\v = \0$.  A $\Z^d$-action is called \textbf{minimal} if every orbit is dense in $X$.  A minimal $\Z^d$-action $(X,\T)$ on a Cantor space $X$ is called a \textbf{minimal $\Z^d$-Cantor system}.  Given a minimal $\Z^d$-Cantor system $(X,\T)$, the set of Borel probability measures invariant under each $\T^\v$ is denoted $\mathcal{M}(X,\T)$.  If for $\mu \in \mathcal{M}(X,\T)$, the only Borel sets invariant under every $\T^\v$ have $\mu$-measure $0$ or $1$, then we say $(X,\T)$ is \textbf{ergodic} with respect to $\mu$.  It is well known that $\mathcal{M}(X,\T) \neq \emptyset$; if $\mathcal{M}(X,\T)$ consists of exactly one measure $\mu$, then $(X,\T)$ is ergodic with respect to $\mu$ and we say that $(X,\T)$ is \textbf{uniquely ergodic}; we may indicate such a system by $(X,\T,\mu)$.


\subsection{Equivalence relations on actions}  A natural problem in topological dynamics is to classify systems up to various notions of equivalence, the most natural notion being conjugacy.  Suppose $\T : \Z^d \actson X$ and $\S : \Z^d \actson Y$.  We say $(X,\T)$ and $(Y,\S)$ are \textbf{(topologically) conjugate} if there is a homeomorphism $\Phi : X \to Y$ such that $\Phi \circ \T^\v = \S^\v \circ \Phi$ for all $\v \in \Z^d$.  

Suppose $\T : \Z^2 \actson X$ and one defines $\S$ by ``switching the generators'' of $\T$, i.e. $\S^{(v_1, v_2)}(x) = \T^{(v_2, v_1)}(x)$.  
In general, such a $\T$ and $\S$ are not conjugate,
 but they must be equivalent in the following weaker sense:  if $\T : \Z^d \actson X$ and $\S : \Z^d \actson Y$, we say $(X,\T)$ and $(Y,\S)$ are \textbf{isomorphic} if there is a homeomorphism $\Phi : X \to Y$ and a group isomorphism $\vartheta : \Z^d \to \Z^d$ such that $\Phi \circ \T^\v = \S^{\vartheta(\v)} \circ \Phi$ for all $\v \in \Z^d$.  In this setting, $\vartheta$ must be given by some matrix in $GL_d(\Z)$.

An even weaker notion of equivalence is when $\T$ and $\S$ can be said to have ``the same orbits''.  More precisely, let $\T : \Z^{d_1} \actson X$ and $\S : \Z^{d_2} \actson Y$.  We say $(X,\T)$ and $(Y,\S)$ are \textbf{orbit equivalent} if there is a homeomorphism $\Phi : X \to Y$ (called an \textbf{orbit equivalence}) such that for every $x \in X$,
\[\Phi\left( \bigcup_{\v \in \Z^{d_1}} \T^\v(x)\right) =  \bigcup_{\v \in \Z^{d_2}} \S^\v\left(\Phi(x)\right).\]

If  $\T : \Z^{d_1} \actson X$ and $\S : \Z^{d_2} \actson Y$ are free actions which are orbit equivalent via $\Phi : X \to Y$, then there is a function 
$\h_\Phi : X \times \Z^{d_2} \to \Z^{d_1}$ such that for all $x \in X$ and $\v\in \Z^{d_2}$,
\[\S^\v(\Phi(x)) = \Phi\left(\T^{\h_\Phi(x,\v)}(x)\right)\]
and a function $\h_{\Phi^{-1}} : Y \times \Z^{d_1} \to \Z^{d_2}$ such that for all $y \in Y$ and $\w\in\Z^{d_1}$,
\[\T^\w(\Phi^{-1}(y)) = \Phi^{-1}\left(\S^{\h_{\Phi^{-1}}(y,\w)}(y)\right).\]
The functions $\h_\Phi$ and $\h_{\Phi^{-1}}$ are called the \textbf{orbit cocycles} associated to the orbit equivalence $\Phi$.  

For a general orbit equivalence, the orbit cocycles $\h_\Phi$ and $\h_{\Phi^{-1}}$ may or may not be continuous, but is natural to ask for some sort of continuity.  With this in mind, we say $\T : \Z^{d_1} \actson X$ and $\S : \Z^{d_2} \actson Y$ are \textbf{continously orbit equivalent} if there is an orbit equivalence $\Phi : X \to Y$ whose orbit cocycles $\h_\Phi$ and $\h_{\Phi^{-1}}$ are continuous with respect to the given topologies on $X$ and $Y$, the discrete topologies on $\Z^{d_1}$ and $\Z^{d_2}$, and the product topologies on $X \times \Z^{d_2}$ and $Y \times \Z^{d_1}$.

It is clear that conjugate actions are isomorphic, isomorphic actions are continuously orbit equivalent, and continuously orbit equivalent actions are orbit equivalent.  However, none of these equivalence relations coincide (see \cite{L}, \cite{CM}, \cite{GPS3}).


\subsection{Speedups}

In this paper, we examine a relation on minimal $\Z^d$-Cantor systems coming from speedups.  Speedups were initially studied by Neveu \cite{N}, \cite{Neveu}, although he did not use the terminology ``speedup''.  However, 
the word ``speedup'' came to be used because of its interpretation in the one-dimensional case (i.e. actions of $\Z$).  Essentially, if  $T : X \to X$ is some map, then a ``speedup'' of $T$ is a map $T^p : X \to X$ where $p : X \to \{1, 2, 3, ...\}$.  So $(X,T^p)$ is a system in which points are ``sped up'', i.e. they move forward more quickly than they do under $T$.  

To define what is meant by a speedup of a $\Z^d$-action, it becomes necessary to specify what one means by ``moving forward''.    Toward that end, we make the following definitions:

\begin{defn} A \emph{\textbf{filled cone}} is an open, connected subset of $\mathbb{R}^d$ whose boundary consists of $d$ distinct hyperplanes passing through the origin.  A \textbf{\emph{cone}} is the intersection of a filled cone with  $(\mathbb{Z}^d - \{\mathbf{0}\})$.
\end{defn}  

In particular, notice the zero vector does not belong to any cone.  There are only two cones which are subsets of $\Z$:  $\Z^+ = \{1,2,3,...\}$ 
and $\Z^- = \{..., -3,-2,-1\}$.  

\begin{defn} Let $\T : \Z^{d_1} \actson X$ and let $d_2 \in\Z^+$.

A \textbf{\emph{cocycle}} for $\T$ is a function $\p : X \times \Z^{d_2} \to \Z^{d_1}$ such that $\p(x,\0) = \0$ for all $x \in X$ and $\p(x, \v) + \p(\T^{\p(x,\v)}(x), \w) = \p(x, \v + \w)$ for all $x \in X$ and all $\v, \w \in \Z^{d_2}$.  

A \textbf{\emph{speedup}} of $(X,\T)$ is an action $\S : \Z^{d_2} \actson X$ where $\S^\v(x) = \T^{\p(x,\v)}(x)$ for some cocycle $\p$ called the \textbf{\emph{speedup cocycle}}.

Given a cone $\C\subseteq\Z^{d_1}$, if 
$(X,\S)$ is a speedup of $(X,\T)$ such that its speedup cocycle $\mathbf{p}$ satisfies $\p(x, \e_j) \in \C$ for all $j \in \{1, ..., d_2\}$ and all $x \in X$, then we say $(X,\S)$ is a $\C$\textbf{\emph{-speedup}} of $(X,\T)$.  Here, and throughout the paper, $\e_j = (0,...,0,1,0,...,0)$ is the $j^{th}$ standard basis vector.
\end{defn}

Observe that if $(X,\S)$ is a $\C$-speedup of $(X,\T)$ with speedup cocycle $\p$, then it follows from the cocycle relation, together with the fact 
that cones are closed under addition, that $\p(x,\v) \in \C$ for all $\v \in [0,\infty)^{d_2} - \{\0\}$.

Suppose $(X,\T)$ and $(Y,\S)$ are orbit equivalent via $\Phi : X \to Y$.  Then the orbit cocycles $\mathbf{h}_\Phi$ and $\mathbf{h}_{\Phi^{-1}}$ are indeed cocycles, and $\h$ can be thought of as a speedup cocycle giving a speedup of $(X,\T)$ (though not necessarily a $\C$-speedup for a particular cone $\C$) which is conjugate to $(Y,\S)$.  However, speedups are not necessarily orbit equivalences:  given $T : \Z \actson X$, the cocycle $p(x,v) =2v$ defines $(X,T^2)$ as a speedup of $(X,T)$.  In many cases, $T^2$ is not conjugate, nor even orbit equivalent, to $T$.

To define a $\C$-speedup of $(X,\T)$ with $\C\subseteq\Z^{d}$, it is sufficient to specify $d$ functions $\p_1, ..., \p_d : X \to \C$ with the property that, for all $i,j \in\{1, ..., d\}$,
\[\p_i(\T^{\e_j}(x)) + \p_j(x) = \p_j(\T^{\e_i}(x)) + \p_i(x).\]
Then by defining $\S^{\e_j} = \T^{\p_j}$ (in other words, defining $\p(x,\e_j) = \p_j(x)$ and extending so that $\p$ is a cocycle), so long as $\S$ acts by homeomorphisms, the $\Z^d$-action $(X,\S)$ will be a speedup of $(X,\T)$.  We say that the speedup so defined is \textbf{generated} by the $\p_1, ..., \p_d$.

We remark that by definition, a speedup of $(X,\T)$ must be an action by homeomorphisms, so for example, given $T : \Z \actson X$ and 
$p : X \times \Z \to \Z$ 
with $p(x,1) = 2$ and $p(T(x),1) = 1$ for some $x$, $p$ cannot be a speedup cocycle for a speedup of $T$, because said speedup would map both 
$x$ and $T(x)$ to $T^2(x)$.  In \cite{N}, Neveu gave conditions on the values of $p(x,1)$ which are necessary and sufficient for a function 
$p : X \times \{1\} \to \{1,2,3,...\}$ to generate a valid speedup cocycle for a $\Z$-action.

In general, the speedup cocycle $\p$ defining a speedup need not be continuous, but it must be Borel if the action being sped up is a free action:

\begin{thm} \label{thm1.3} Let $\T: \Z^{d_1} \actson X$ and $\S : \Z^{d_2} \actson X$.  Suppose $(X,\S)$ is a speedup of $(X,\T)$.  If $(X,\T)$ is free, then the speedup cocycle $\p : X \times \Z^{d_2} \to \Z^{d_1}$ is a Borel function.
\end{thm}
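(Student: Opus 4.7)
The plan is to exploit freeness of $\T$ to obtain a pointwise formula for $\p$ that involves only continuous operations, and then leverage the discreteness of $\Z^{d_2}$ to reduce the joint Borel measurability to slice-by-slice measurability.

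First I would fix $\v \in \Z^{d_2}$ and observe that the identity $\S^\v(x) = \T^{\p(x,\v)}(x)$, together with freeness of $\T$, forces $\p(x,\v)$ to be the \emph{unique} element $\w \in \Z^{d_1}$ satisfying $\T^\w(x) = \S^\v(x)$. Consequently, for each $\w \in \Z^{d_1}$,
\[
\{x \in X : \p(x,\v) = \w\} \;=\; \{x \in X : \T^\w(x) = \S^\v(x)\}.
\]
Since $\T^\w$ and $\S^\v$ are both continuous (indeed homeomorphisms) and $X$ is a Cantor (hence Hausdorff) space, the right-hand side is the equalizer of two continuous maps and therefore closed in $X$.

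Next I would conclude that, for each fixed $\v$, the slice $x \mapsto \p(x,\v)$ is a Borel function into $\Z^{d_1}$: its fibers $\{\p(\cdot,\v) = \w\}$ partition $X$ into countably many closed (hence Borel) sets indexed by $\w \in \Z^{d_1}$, which suffices since $\Z^{d_1}$ carries the discrete topology.

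Finally, to upgrade from slice Borel measurability to joint Borel measurability of $\p : X \times \Z^{d_2} \to \Z^{d_1}$, I would use that the product topology on $X \times \Z^{d_2}$ (with $\Z^{d_2}$ discrete) makes $X \times \Z^{d_2}$ a countable disjoint union $\bigsqcup_{\v \in \Z^{d_2}} X \times \{\v\}$ of clopen Borel sets. For any Borel $E \subseteq \Z^{d_1}$,
\[
\p^{-1}(E) \;=\; \bigsqcup_{\v \in \Z^{d_2}} \bigl(\{x : \p(x,\v) \in E\} \times \{\v\}\bigr),
\]
which is a countable union of Borel rectangles and hence Borel. There is no real obstacle here; the only subtle point is invoking freeness of $\T$ to make $\p(x,\v)$ well-defined as a function of $(x, \S^\v(x))$, and this is exactly where the hypothesis is used.
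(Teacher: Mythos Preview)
Your proof is correct and follows essentially the same approach as the paper: both arguments use freeness of $\T$ to identify $\{x : \p(x,\v) = \w\}$ with the coincidence set $\{x : \T^\w(x) = \S^\v(x)\}$, observe this set is closed (you phrase it as an equalizer of continuous maps into a Hausdorff space, the paper verifies it directly with sequences), and then take countable unions. Your treatment of the joint measurability on $X \times \Z^{d_2}$ is in fact slightly more explicit than the paper's, which tacitly identifies $\p^{-1}(S)$ with a union of the sets $A(\v,\w) \subseteq X$ rather than $A(\v,\w) \times \{\w\}$.
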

\begin{proof} Fix $\v \in \Z^{d_1}$ and $\w \in \Z^{d_2}$.  We will show that the set 
\[A(\v,\w) = \{x \in X : \p(x, \w) = \v\}\]
is closed, from which it follows that for any $S \subseteq \Z^{d_1}$,
\[
\p^{-1}(S) = \bigcup_{\v \in S} \bigcup_{\w \in \Z^{d_2}} A(\v,\w)
\]
will be $F_\sigma$, meaning $\p$ is Borel.

Let $\{x_j\} \subseteq A(\v,\w)$ be such that $x_j \to x$.  Since $\T^\v$ and $\S^\w$ are both homeomorphisms, we see that $\T^\v(x_j) \to \T^\v(x)$ and $\S^\w(x_j) \to \S^\w(x)$.  Since, by definition of $A(\v,\w)$, we know $\T^\v(x_j) = \S^\w(x_j)$ for all $j$, we see that $\T^\v(x) = \S^\w(x)$.  Since $(X,\T)$ is free, it follows that $\p(x,\w)= \v$, meaning $x \in A(\v,\w)$.  \end{proof}

Speedups of measure-preserving (as opposed to topological) actions of $\Z^d$ were studied in \cite{AOW} and \cite{BBF} (for $d = 1$) and \cite{JM1} and \cite{JM2} for ($d > 1$).  In particular, a main result of \cite{JM1} is a version of Dye's theorem \cite{Dye, Dye2} stating that given any two ergodic measure-preserving actions of $\Z^d$, they are ``speedup equivalent'', in the sense that for any cone $\C \subset \Z^d$, there is a measurable $\C$-speedup of one which is measurably conjugate to the other.  A major aim of this paper is to investigate analogous results in the topological category.


\subsection{Odometers}\label{odometers}  
We will especially consider a well-studied class of minimal Cantor systems called odometers.  These can be defined in a variety of ways; 
the two approaches we review here are a construction due to Cortez \cite{C} and an equivalent characterization given by Giordano, Putnam and Skau \cite{GPS3}. 

For Cortez' construction, we begin by considering any decreasing sequence $\mathfrak{G} = \{G_j\}_{j = 1}^\infty$ of subgroups of $\Z^d$, where each $G_j$ has finite index in $\Z^d$.  For each $j \geq 1$, let $q_j: \Z^d/G_{j+1} \to \Z^d/G_j$ be the quotient map.  Then, define
\begin{align*}
X_\gothG & = \stackrel[\longleftarrow]{}{\lim} (\Z^d/G_j) \\
& = \{(\x_1, \x_2, \x_3, ...) : \x_j \in \Z^d/G_j \textrm{ and } q_j(\x_{j+1}) = \x_j \textrm{ for all }j\}.
\end{align*}
$X_\gothG $ is a topological group (the topology is the product of the discrete topologies on each $\Z^d/G_n$); for each $j \geq 1$ there is a natural coordinate map $\pi_j : X_\gothG \to \Z^d/G_j$.  More importantly, there is a minimal action $\sigma_\gothG : \Z^d \actson X_\gothG$ given by
\[
\sigma_\gothG^\v(\x_1, \x_2, \x_3, ...) = (\x_1 + \v, \x_2 + \v, \x_3 + \v, ...)
\]  
where the sum in the $j^{th}$ component is taken mod $G_j$.

\begin{defn}[Cortez definition of odometer] \label{CortezDef}
A \textbf{\emph{$\Z^d$-odometer}} is any $\Z^d$-action conjugate to one of the form $(X_\gothG, \sigma_\gothG)$ described above, where $\gothG$ is some decreasing sequence of finite-index subgroups of $\Z^d$.
\end{defn}

We remark that $G$-odometers can be defined for any residually finite group $G$ (not just $\Z^d$); for more, see \cite{CP} or \cite{Dow}.

\begin{thm}[Basic properties of odometers] \label{basicodomprops} Let $\gothG = \{G_1, G_2, ...\}$ be a decreasing sequence of finite-index subgroups of $\Z^d$.
\begin{enumerate}
\item So long as $G_j \neq G_{j+1}$ for infinitely many $j$, $(X_\gothG, \sigma_\gothG)$ is a minimal $\Z^d$-Cantor system;
\item $(X_\gothG, \sigma_\gothG)$ is free if and only if $\stackrel[j=1]{\infty}\bigcap G_j = \{\0\}$;
\item $(X_\gothG, \sigma_\gothG)$ is uniquely ergodic with invariant Borel probability measure $\mu_\gothG$ satisfying $\mu_\gothG \left(\pi_j^{-1}(\x + G_j)\right) = [\Z^d : G_j]^{-1}$ for all $j \geq 1$ and all $\x \in \Z^d$.
\end{enumerate}
\end{thm}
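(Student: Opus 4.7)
The plan is to exploit the compact topological group structure of $X_\gothG$, under which $\sigma_\gothG$ acts by translation by the image of $\Z^d$ under the diagonal map $\iota : \v \mapsto (\v + G_j)_j$. For part (1), I would first note that $X_\gothG$ is a closed subgroup of the countable product $\prod_j \Z^d/G_j$ of finite discrete groups, hence compact, metrizable, and totally disconnected. The hypothesis that $G_j \neq G_{j+1}$ infinitely often gives $[\Z^d : G_j] \to \infty$, so $X_\gothG$ is infinite; being a topological group it is homogeneous, so no point is isolated, and hence $X_\gothG$ is a Cantor space. For minimality, given $x = (\x_j) \in X_\gothG$ and a basic cylinder $\pi_k^{-1}(\y_k + G_k)$, I would choose any $\v \in \Z^d$ representing $\y_k - \x_k \in \Z^d / G_k$; the nesting $G_k \subseteq G_j$ for $j \leq k$ then guarantees $\sigma_\gothG^\v(x) \in \pi_k^{-1}(\y_k + G_k)$, so the orbit of $x$ meets every cylinder and is dense.

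Part (2) follows by directly computing the stabilizer of an arbitrary $x = (\x_j)$. The equation $\sigma_\gothG^\v(x) = x$ unwinds coordinate-by-coordinate to $\v \in G_j$ for every $j$, that is, $\v \in \bigcap_j G_j$. This stabilizer is independent of $x$, so the action is free if and only if $\bigcap_j G_j = \{\0\}$; in fact, each $\sigma_\gothG^\v$ either fixes every point of $X_\gothG$ or none.

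For part (3), I would use that any compact Hausdorff topological group carries a unique normalized Haar measure $\mu_\gothG$, automatically invariant under every translation, in particular under $\sigma_\gothG$. For uniqueness, let $\nu$ be any $\sigma_\gothG$-invariant Borel probability measure. The density-in-cylinders argument of part (1), applied at the identity $\0 \in X_\gothG$, shows that $\iota(\Z^d)$ is a dense subgroup of $X_\gothG$. The translation action $X_\gothG \actson \mathcal{P}(X_\gothG)$ on Borel probability measures is weak-$*$ continuous, so invariance of $\nu$ under the dense subgroup $\iota(\Z^d)$ upgrades to invariance under all of $X_\gothG$, forcing $\nu = \mu_\gothG$ by uniqueness of Haar measure. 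The formula $\mu_\gothG(\pi_j^{-1}(\x + G_j)) = [\Z^d : G_j]^{-1}$ is then immediate, since the $[\Z^d : G_j]$ cylinders indexed by cosets in $\Z^d/G_j$ partition $X_\gothG$ into $\iota$-translates of one another.

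The main obstacle is the upgrade from $\iota(\Z^d)$-invariance to full $X_\gothG$-invariance in the uniqueness step. This is the standard Haar-measure continuity argument: for every $f \in C(X_\gothG)$, the map $y \mapsto \int f(g + y) \, d\nu(g)$ is continuous, so constancy on the dense set $\iota(\Z^d)$ forces constancy on $X_\gothG$. Once this is in hand, the formula on cylinders and the topological claims of parts (1) and (2) reduce to routine bookkeeping on the inverse system.
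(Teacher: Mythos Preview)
The paper states this theorem without proof; it is presented as a collection of well-known facts about $\Z^d$-odometers (see, e.g., Cortez \cite{C}), so there is no argument in the paper to compare against. Your proposal is correct and is the standard route: realize $X_\gothG$ as a compact metrizable totally disconnected group, use homogeneity plus $[\Z^d:G_j]\to\infty$ to get a Cantor space, read off minimality and the stabilizer computation directly from the coordinates, and handle unique ergodicity by identifying $\mu_\gothG$ with Haar measure and upgrading $\iota(\Z^d)$-invariance to full $X_\gothG$-invariance via density and weak-$*$ continuity. The one point worth stating more explicitly is that the density of $\iota(\Z^d)$ in $X_\gothG$ is exactly the minimality argument of part (1) applied to the identity element, which you do note; with that in hand the continuity step $y\mapsto \int f(g+y)\,d\nu(g)$ is routine.
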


We say that a $\Z^d$-odometer is \textbf{product-type} if it is conjugate to a product of $d$ $\Z$-odometers.  Equivalently, this means the odometer is conjugate to some $(X_\gothG, \sigma_\gothG)$, where each group $G_j \in \gothG$ is of the form $G_j = \stackrel[k=1]{d}{\times} (\Z/m_{j,k}\Z)$.

A second characterization of $\Z^d$-odometers, given by Giordano, Putnam and Skau, 
uses Pontryagin duality. 
Given any compact abelian group $K$,  its Pontryagin dual is $\widehat{K}$, 
the set of continuous group homomorphisms from $K$ to the circle $\mathbb{T} = \R/\Z$.   Now let $H$ be a group such that 
$\Z^d \leq H \leq \mathbb{Q}^d$; give both $H$ and $H/\Z^d$ the discrete topology.  
Then
\[H/\Z^d \subseteq \mathbb{Q}^d/\Z^d \subseteq \R^d/\Z^d \cong \mathbb{T}^d,\]
so there is an inclusion map $\rho : H/\Z^d \to \mathbb{T}^d$.  Let $Y_H = \widehat{H/\Z^d}$; using duality we have
$\widehat{\mathbb{T}^d} \cong \widehat{ \R^d/\Z^d} \subseteq Y_H$.  Since $\Z^d \cong \widehat{\mathbb{T}^d}$, we have
$\widehat{\rho} : \Z^d  \to Y_H$.
Thus for every $\v \in \Z^d$, $\widehat{\rho}(\v)$ is a continuous homomorphism from $H/\Z^d$ to $\mathbb{T}$.
We can then define an action $\psi_H : \Z^d \actson Y_H$ by, for each $\v \in \Z^d$,
\[\psi_H^\v(x) = x + \widehat{\rho}(\v),\]
i.e. for any group homomorphism $x : H/\Z^d \to \mathbb{T}$,
\[(\psi_H^\v(x))(\h + \Z^d) = x(\h) e^{2 \pi i (\h \cdot \v)}.\]

\begin{defn}[Giordano-Putnam-Skau definition of odometer]
A \textbf{\emph{$\Z^d$-odometer}} is any $\Z^d$-action conjugate to one of the form $(Y_H, \psi_H)$ described above.  In this setting, we call $H$ the \emph{\textbf{first cohomology group}} of the odometer.
\end{defn}

The reason $H$ is called the ``first cohomology group'' comes from the following ideas first studied by Forrest and Hunton \cite{FH}.  Given a minimal $\Z^d$-Cantor system $(X,\T)$, let $C(X,\Z)$ be the set of continuous functions from $X$ to $\Z$.  $C(X,\Z)$ is a $\Z^d$-module via usual addition and the scalar multiplication $\v \cdot f = f \circ \T^\v$ for $\v \in \Z^d$, $f \in C(X,\Z)$.  In this context, for an odometer $(Y_H, \psi_H)$, $H = H^1(X,\T)$, the first cohomology group of $\Z^d$ with coefficients in the module $C(X,\Z)$.

It turns out that the Cortez and Giordano-Putnam-Skau definitions produce the same class of systems.  More precisely, given any sequence $\gothG = \{G_1, G_2, ...\}$ as in the Cortez definition, define for each $j$,
\[H_j = \{\v \in \R^d : \v \cdot \x \in \Z \textrm{ for all }\x \in G_j\}\]
and set $H = \stackrel[j=1]{\infty}{\bigcup}H_j$.  The odometer $(Y_H, \psi_H)$ is conjugate to $(X_\gothG, \sigma_\gothG)$.  For the reverse direction, given any $H$ with $\Z^d \leq H \leq \mathbb{Q}^d$, define for each $j$, $H_j = \left(\frac 1{j!}\Z^d\right) \cap H$ and set
\[G_j = \{\v \in \R^d : \v \cdot \x \in \Z \textrm{ for all }\x \in H_j\}.\]
This produces a sequence $\gothG = \{G_1, G_2, ...\}$ for which the corresponding odometer $(X_\gothG, \sigma_\gothG)$ is conjugate to $(Y_H, \psi_H)$.

Notice, in the previous paragraph, the ``dual'' relationship between the $G_j$ and the $H_j$ in the two definitions of $\Z^d$-odometer actions.  As we will need notation for this relationship later, we define, for any set $E \subseteq \R^d$, the set $E^*$ by
\[E^* = \{\v \in \R^d : \v \cdot \x \in \Z \textrm{ for all }\x \in E\}.\]

An advantage of the Giordano-Putnam-Skau approach to defining odometers is that the equivalence relations outlined in Section 1.2 can be easily characterized, for $\Z$ and $\Z^2$-odometers, in terms of the first cohomology group of the action:

\begin{thm}[Theorem 1.5, \cite{GPS3}] \label{isomorphismtest} Let $(X,\T)$ be a free $\Z^{d_1}$-odometer whose first cohomology group is $H(\T)$, and let $(Y,\S)$ be a free $\Z^{d_2}$-odometer whose first cohomology group is $H(\S)$. 
\begin{enumerate}
\item If $d_1, d_2 \leq 2$, then $(X,\T)$ and $(Y,\S)$ are conjugate if and only if $d_1 = d_2$ and $H(\T) = H(\S)$.
\item If $d_1, d_2 \leq 2$, then $(X,\T)$ and $(Y,\S)$ are isomorphic if and only if $d_1 = d_2$ and $\alpha(H(\T)) = H(\S)$ for some $\alpha \in GL_{d_1}(\Z)$.
\item If $d_1, d_2 \leq 2$, then $(X,\T)$ and $(Y,\S)$ are continuously orbit equivalent if and only if $d_1 = d_2$ and $\alpha (H(\T)) = H(\S)$ for some $\alpha \in GL_{d_1}(\mathbb{Q})$ with $\det \alpha = \pm 1$.
\item $(X,\T)$ and $(Y,\S)$ are orbit equivalent if and only if the superindex (see \cite{GPS3}) of $H(\T)$ in $\Z^{d_1}$ equals the superindex of $H(\S)$ in $\Z^{d_2}$.  
\end{enumerate}
\end{thm}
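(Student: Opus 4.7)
This theorem is attributed to Giordano, Putnam and Skau, so I would follow their Pontryagin-dual framework for odometers. In each of the four parts, the strategy is to translate the stated dynamical equivalence between $(X,\T)$ and $(Y,\S)$ into an algebraic condition on the pair $(H(\T), H(\S))$ via the identification $Y_H = \widehat{H/\Z^d}$. The action $\psi_H^\v$ is Pontryagin dual to multiplication by the character $\h + \Z^d \mapsto e^{2\pi i(\h \cdot \v)}$ on the discrete group $H/\Z^d$, so any equivariant continuous map between two such odometers dualizes to a group homomorphism between the discrete groups $H(\T)/\Z^{d_1}$ and $H(\S)/\Z^{d_2}$ whose interaction with these characters is transparent.

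For parts (1) and (2), the ``if'' directions are essentially immediate from the construction of $Y_H$. For the ``only if'' direction of (1), a conjugacy $\Phi: Y_{H(\T)} \to Y_{H(\S)}$ dualizes to a group isomorphism $\widehat{\Phi}: H(\S)/\Z^{d_2} \to H(\T)/\Z^{d_1}$ that intertwines the two systems of characters; comparing the resulting character lattices forces $d_1 = d_2$, and lifting $\widehat{\Phi}$ to a $\mathbb{Q}$-linear map on $\mathbb{Q}^d$ fixing $\Z^d$ gives $H(\T) = H(\S)$. Part (2) proceeds identically except the intertwining is allowed to twist by some $\vartheta \in GL_{d_1}(\Z)$, which lifts to the matrix $\alpha$.

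Parts (3) and (4) are more delicate. For continuous orbit equivalence in (3), the orbit cocycles $\h_\Phi$ and $\h_{\Phi^{-1}}$ are locally constant, and I would exploit the dimension hypothesis $d \leq 2$ to argue that the ``slope'' data cut out by these cocycles assembles into a $\mathbb{Q}$-linear map on $\mathbb{Q}^d$; the orbit-preservation condition then forces $\det \alpha = \pm 1$. For plain orbit equivalence in (4) the cocycles are only Borel, and the relevant invariant degrades to the superindex of $H$ in $\Z^d$, which captures how densely $H$ sits in $\mathbb{Q}^d$ in a scale-invariant way. To prove this part I would pass through a Bratteli--Vershik presentation of each odometer and match the combinatorial tower data using equal superindices.

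The main obstacle throughout is the rigidity (``only if'') direction in each case: recovering an algebraic invariant of $H$ from purely dynamical data. The restriction $d \leq 2$ in parts (1)--(3) is essential, since in higher rank the dual groups $H/\Z^d$ admit ``exotic'' self-equivalences that intertwine the character action but not any ambient linear structure, and ruling these out would require invariants strictly finer than the isomorphism class of $H$.
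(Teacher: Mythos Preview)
The paper does not prove this theorem at all: it is quoted verbatim as Theorem 1.5 of \cite{GPS3} and immediately followed by the next subsection, with no proof environment. So there is no ``paper's own proof'' to compare your proposal against; the authors are simply importing the result as a black box from Giordano, Putnam and Skau.

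That said, your sketch is a reasonable outline of the Pontryagin-duality strategy that underlies the GPS3 arguments for parts (1) and (2). Your account of part (3) is vaguer---the passage from continuous orbit cocycles to a single $\mathbb{Q}$-linear map with determinant $\pm 1$ is the genuine content, and ``slope data assembles into a linear map'' hides the work. For part (4), the actual GPS3 route goes through the dimension-group machinery and the $\Z^d$ orbit-equivalence theorem of \cite{GMPS} rather than a direct Bratteli--Vershik matching argument, so your proposed approach there diverges from the source and would need substantially more detail to be convincing. But again, since the present paper offers no proof, none of this bears on the comparison you were asked to make.
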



\subsection{Towers, refinements and Kakutani-Rohklin partitions}

This section describes some machinery that will be used in subsequent proofs.  First, for any nonnegative integer $h$, let $[h] = \{0, 1, 2, ..., h-1\}$.  Second, for any vector $\h = (h_1, ..., h_d) \in \Z^d$ with $\h \geq 0$ (meaning $h_j \geq 0$ for all $j$), set $[\h] = [h_1] \times [h_2] \times ... \times [h_d]$.


\subsubsection{Pretowers and precastles}

A ``pretower'' is simply a rectangular array of disjoint subsets of $X$ of equal measure:

\begin{defn}  
Let $\mu$ be a Borel probability measure on a Cantor space $X$ and let $\h \in \Z^d$ be such that $\h \geq \0$.  A \textbf{\emph{pretower (in $X$)}} is a collection $\{E(\v) : \v \in [\h]\}$ of clopen subsets $E(\v) \subseteq X$, where the sets are pairwise disjoint and all have the same $\mu$-measure.  The vector $\h$ is called the \textbf{\emph{size}} or \textbf{\emph{height}} of the pretower; $d$ is the \textbf{\emph{dimension}} of the pretower; the individual sets $E(\v)$ are called \textbf{\emph{levels}} of the pretower, and $\mu$ is the {\textbf{\emph{pretower measure}}}.  
\end{defn}

\begin{defn}
A \textbf{\emph{precastle (in $X$)}}  is a set of finitely many pretowers in $X$, all having the same dimension and same $\mu$ for their pretower measure, and where the levels of the pretowers are all disjoint from one another. We denote a precastle by $\P = \{E(\alpha, \v) : 1 \leq \alpha \leq t, \v \in [\h(\alpha)]\}$, which indicates that the precastle consists of $t$ many pretowers of respective heights $\h(\alpha)$.
\end{defn}


A one-dimensional (i.e. $d=1$) pretower can be subdivided into three disjoint pieces: the base, the top, and the interior.  We define the 
\textbf{base} and \textbf{top} of a one-dimensional precastle to be the sets
\[
\P_{0} = \bigsqcup_{\alpha = 1}^t E(\alpha, 0) \quad \textrm{ and } \quad 
\P_{2} = \bigsqcup_{\alpha = 1}^t E(\alpha, h(\alpha)-1),
\]
while the \textbf{interior} of a precastle is $\P_1 = \stackrel[\alpha = 1]{t}{\bigsqcup} \stackrel[v=1]{h(\alpha)-2}{\bigsqcup} E(\alpha, v)$.
Setting the \textbf{boundary} of a precastle $\P$, denoted $\partial \P$, to be the union of its base and top, we note that  
$\P_1 = \P - \partial \P$ is the set of points in the precastle that are not in its boundary.


\subsubsection{Towers and castles}   First, given clopen subsets $A$ and $B$ of a Cantor space, we write $T : A \homeo B$ if $T$ is a homeomorphism from $A$ to $B$.  If we define an action by homeomorphisms between the levels of a pretower (precastle), the pretower (precastle) becomes a tower (castle).  

\begin{defn}
Let $\h \in \Z^d$ and suppose $\{E(\v) : \v \in [\h]\}$ is a pretower in a Cantor space $X$.  If for each $\v, \w \in [\h]$, there is  $\T^{\w - \v}: E(\v) \homeo E(\w)$ such that:
\begin{compactenum}
\item for every $\v \in [\h]$, $ \T^{\0} : E(\v) \homeo E(\v)$ is the identity map, 
\item $\T^{\x} \circ \T^{\y} = \T^{\x + \y}$ wherever these maps are defined, and
\item each $\T^{\w - \v}$ preserves the pretower measure $\mu$, meaning for any Borel $A\subseteq E(\w)$, $\mu( \T^{-(\w - \v)} )(A) = \mu(A)$,
\end{compactenum}
then we call the pretower a $\T$\textbf{\emph{-tower}}.  
A $\T$\textbf{\emph{-castle}} is a union of finitely many $\T$-towers of the same dimension and with the same $\mu$ as their pretower measure, and all of whose levels are disjoint.  Given a $\T$-castle $\{E(\alpha, \v) : 1 \leq \alpha \leq t, \v \in [\h(\alpha)]\}$, for any $x \in E(\alpha, \v)$ we define the \textbf{\emph{$\T$-column over $x$}} to be $\{\T^\w(x) : \w \in [\h(\alpha)] - \v\}$.  
\end{defn}


\subsubsection{Kakutani-Rohklin partitions}

Castles are closely related to Kakutani-Rohklin partitions:

\begin{defn}
Let $X$ be a Cantor space and let $\T : \Z^d \actson X$.  A \textbf{\emph{Kakutani-Rohklin (K-R) partition}} for $(X,\T)$ is a partition of $X$ into finitely many clopen sets $\{B(j,\v) : 1 \leq j \leq t, \v \in A(j)\}$, where for each $j$, $A(j)$ is a finite subset of $\Z^d$ containing $\0$, such that for each $\v \in A(j)$, $B(j,\v) = \T^\v(B(j,\0))$.
\end{defn}

If $(X,\T)$ is a minimal $\Z^{d}$-Cantor system with $\mu \in \mathcal{M}(X,\T)$, then any Kakutani-Rohklin partition for $(X,\T)$ with each $A(j) = [\h_j]$ for some $\h_j\in \Z^d$  is a $\T$-castle with pretower measure $\mu$.  However, $\T$-castles need not be K-R partitions for a $\Z^d$-action, because it is possible that $\T$ is not defined on every level, e.g. those levels one could think of as being on the boundary of the rectangle $[\h_j]$.

Odometers possess a useful, standard sequence of K-R partitions, described in the following theorem:

\begin{thm}[K-R partitions for odometers] \label{KRpartthm} Given a $\Z^d$-odometer $(X_\gothG, \sigma_\gothG)$ with unique invariant measure $\mu$, there exists a sequence $\{\P_j\}_{j = 1}^\infty$ of partitions of $X$ with the following properties:
\begin{enumerate}
\item each $\P_j$ is a K-R partition for $(X_\gothG, \sigma_\gothG)$ consisting of one rectangular tower;
\item the partitions $\P_j$ refine, i.e. each atom of $\P_{j}$ is a union of atoms of $\P_{j+1}$;
\item the partitions $\P_j$ generate the topology on $X$;
\item each atom of $\P_j$ has measure $[\Z^d : G_j]^{-1}$;
\item $\mu(\partial \P_j) \to 0$ as $j \to \infty$; and
\item the maximum diameter of any atom of $\P_j$ tends to $0$ as $j \to \infty$.
\end{enumerate}
\end{thm}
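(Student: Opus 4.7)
The plan is to take $\P_j$ to be the clopen partition of $X_\gothG$ whose atoms are the level sets of the $j$-th coordinate map, namely $\pi_j^{-1}(\v + G_j)$ for each coset $\v + G_j \in \Z^d/G_j$. Setting $B_j := \pi_j^{-1}(G_j)$ and using the defining formula $\sigma_\gothG^\v(\x_1,\x_2,\ldots) = (\x_1 + \v, \x_2 + \v, \ldots)$, each atom equals $\sigma_\gothG^\v(B_j)$ as $\v$ ranges over a system of coset representatives of $G_j$, which immediately displays $\P_j$ as a K-R partition with a single tower. Property (4) is then exactly the value Theorem \ref{basicodomprops}(3) assigns to $\mu(B_j)$. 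The refinement property (2) follows because $G_{j+1} \leq G_j$ induces a surjection $\Z^d/G_{j+1} \twoheadrightarrow \Z^d/G_j$ under which each $\P_{j+1}$-atom maps into a unique $\P_j$-atom. Property (3), that the $\P_j$'s generate the topology, is built into the inverse-limit construction of $X_\gothG$, and property (6) (shrinking diameters) follows from (2), (3), and compactness of $X_\gothG$.

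The delicate point is (1), which requires the single tower to be \emph{rectangular}, indexed by $[\h_j]$ for some $\h_j \in \Z^d$ with $\h_j \geq \0$. For this, the coset representatives of $G_j$ in $\Z^d$ must themselves form such a rectangle, which forces $G_j$ to be diagonal in the standard basis, $G_j = h_{j,1}\Z \times \cdots \times h_{j,d}\Z$. Because the conjugacy class of $(X_\gothG, \sigma_\gothG)$ depends only on the underlying directed system, I would first replace $\gothG$ by a cofinal refinement consisting of such diagonal subgroups (exploiting that $N\Z^d \subseteq G$ for every index-$N$ subgroup $G \leq \Z^d$) and simultaneously arrange $\min_k h_{j,k} \to \infty$. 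I expect this cofinal refinement step to be the main obstacle: for odometers that are not of product type, the natural diagonal sub-subgroups of consecutive $G_j$ need not themselves form a chain cofinal with $\gothG$, and a careful interleaving of the two sequences (or an appeal to the structure theorem/Smith normal form for the finite quotients $\Z^d/G_j$) will likely be required.

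Once (1) has been arranged, (5) reduces to a direct count on the tower's boundary: the atoms in $\partial \P_j$ are those $\sigma_\gothG^\v(B_j)$ whose index $\v \in [\h_j]$ has at least one coordinate equal to $0$ or to $h_{j,k} - 1$, of which there are at most $2 N_j \sum_{k=1}^d h_{j,k}^{-1}$, where $N_j = [\Z^d : G_j]$. Since each atom has $\mu$-measure $N_j^{-1}$, we conclude
\[
\mu(\partial \P_j) \;\leq\; 2 \sum_{k=1}^d h_{j,k}^{-1},
\]
which tends to $0$ by our choice of refinement. This completes all six properties.
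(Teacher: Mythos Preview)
Your setup—taking $\P_j$ to be the fibers of $\pi_j$—matches the paper, and your checks of (2), (3), (4), (6) are fine. The flaw is the assertion ``the coset representatives of $G_j$ in $\Z^d$ must themselves form such a rectangle, which forces $G_j$ to be diagonal in the standard basis.'' This implication is false: with $d=2$ and $G = \langle (2,0),(1,2)\rangle$, the rectangle $\{0,1\}\times\{0,1\}$ is a complete set of coset representatives even though $(0,2)\notin G$, so $G\neq 2\Z\times 2\Z$.

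Because of this misstep you are led to replace $\gothG$ by a cofinal chain of diagonal subgroups, which is not only unnecessary but in general impossible: admitting such a cofinal diagonal chain is exactly what it means for the odometer to be product-type, and not every $\Z^d$-odometer is. Your proposed rescues do not close the gap either—the groups $N_j\Z^d$ need not be cofinal with $\{G_j\}$, and Smith normal form diagonalizes each $G_j$ only after a change of the ambient basis of $\Z^d$ that varies with $j$.

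The paper bypasses all of this by exhibiting a rectangular fundamental domain for an \emph{arbitrary} finite-index $G_j\leq\Z^d$, essentially via Hermite normal form. One sets $m_{j,1}=\min\{n>0:n\e_1\in G_j\}$ and, for $k\geq 2$,
\[
m_{j,k}=\min\bigl\{n>0:n\e_k + G_j = (i_1,\ldots,i_{k-1},0,\ldots,0)+G_j\textrm{ for some }i_1,\ldots,i_{k-1}\bigr\},
\]
and checks that $[\mathbf{m}_j]=[m_{j,1}]\times\cdots\times[m_{j,d}]$ meets every coset of $G_j$ exactly once. Then $\P_j=\{\pi_j^{-1}(\v+G_j):\v\in[\mathbf{m}_j]\}$ is already a single rectangular tower with no modification of $\gothG$ required; your boundary estimate for (5) applies verbatim with $h_{j,k}$ replaced by $m_{j,k}$.
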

\begin{proof} 
Let $\gothG = \{G_1, G_2, ...\}$.   For each $j$, define $d$ integers as follows: let 
\[
m_{j,1} = {\mbox{min}}\{ n>0: n \e_1 \in G_j\}.
\]
Since $G_j$ has finite index, such an integer exists.  Then let 
\[
m_{j,2} =  {\mbox{min}}\{ n>0: \exists\, i \textrm{ such that }n\e_2 + G_j = (i,0,...,0) + G_j \}.
\]  
Analogously, for each $k=1,...,d$, define 
\begin{align*}
m_{j,k } & = {\mbox{min}}\{  n>0: \exists \, i_1, i_2, ..., i_{k-1} \textrm{ such that } \\
& \qquad n\e_k + G_j = (i_1,i_2,...,i_{k-1},0,...,0) + G_j \}.
\end{align*}
Defining $\mathbf{m}_j = (m_{j,1}, m_{j,2}, ..., m_{j,d})$, we see that each rectangle $[\mathbf{m}_j]$ contains exactly one representative element from each coset in $\Z^d/G_j$.  

Finally, for each $j \geq 1$ and each $\v \in [\mathbf{m}_j]$, set $B(j,\v) = \pi_j^{-1}(\v + G_j)$, and define $\P_j = \{B(j,\v) : \v \in [\mathbf{m}_j]\}$.  The partitions so defined satisfy the requirements of the theorem.
\end{proof}


\subsubsection{Refinements}

Let $\mathcal{T} = \{E(\alpha, v) : 1 \leq \alpha \leq t, v \in [h(\alpha)]\}$ be a one-dimensional $T$-castle (these constructions extend to higher dimensions, but we will only use them when $d =1$).  A \textbf{refinement} of $\mathcal{T}$ is another $T$-castle $\mathcal{T}'$, where each level of $\mathcal{T}'$ is a subset of a single $E(\alpha, v)$.  
One way to construct a refinement of $\mathcal{T}$ is to partition
each $E(\alpha, 0)$ into finitely many clopen sets $\P(\alpha)$ = $\{E(\alpha, 0, 1), ..., E(\alpha, 0,s(\alpha))\}$.  
The map $T$ then induces a partition on each $E(\alpha, v)$ in such a way that each $T$-tower in $\mathcal{T}$ is divided into $s(\alpha)$ disjoint $T$-towers.  We call the $\mathcal{T}'$ so obtained a \textbf{castle refinement over} $\{ P(\alpha): \alpha\in\{1,...,t\} \, \}$.

We next describe two specific methods of obtaining such a castle refinement that will be used in the proof of Theorem \ref{homeoimpliesspeedup}.  For the first, 
suppose $\{x_j\}$ is some finite set where the $T$-columns over each $x_j$ are pairwise disjoint.  For each $\alpha$, partition $E(\alpha, 0)$ into finitely many disjoint clopen sets $E(\alpha, 0, i)$ such that $E(\alpha, 0,i)$ intersects at most one $T$-column of an $x_j$.  This yields a refinement of $\mathcal{T}$ so that the $x_j$ are in separate $T$-towers, and we say the resulting $T$-castle is obtained by
 \textbf{separating the $x_j$ into distinct towers}.

For the second, let $\P$ be any finite clopen partition of $X$.  For each $\alpha  \in \{1, ..., t\}$, let $\P(\alpha)$ be the partition of $E(\alpha,0)$ into ``$\P$-names'', i.e. we partition each $E(\alpha, 0)$ into maximal clopen atoms $E(\alpha, 0, 1), ..., E(\alpha, 0, s(\alpha))$, where for every $x \in E(\alpha, 0, i)$  and every $v \in [h(\alpha)]$, the atom of $\P$ to which $T^v(x)$ belongs depends only on $v$ and $i$, and not on $x$.  The resulting $T$-castle is called the \textbf{refinement of $\mathcal{T}$ into pure $\P$-columns}.


\subsection{Dimension groups}\label{dimgroups}

We next describe an algebraic object which is a useful tool for studying Cantor minimal systems and that we will use in Theorem \ref{maintoptheorempart1}.  This algebraic object, called a ``dimension group'', was originally introduced by Elliott \cite{E} as an isomorphism invariant of approximately finite algebras.  A different, but equivalent, approach to defining dimension groups, which we follow here, originates in \cite{EHS}.  

\begin{defn} A \textbf{\emph{partially ordered group}} $(G,G^+)$ is a countable abelian group $G$ together with a subset $G^+ \subseteq G$ called the \textbf{\emph{positive cone}}, satisfying:
\begin{enumerate}
\item $G^+ + G^+ \subseteq G^+$;
\item $G^+ - G^+ = G$;
\item $G^+ \cap (-G^+) = \{0\}$.
\end{enumerate}
A partially ordered group is called \textbf{\emph{unperforated}} if for any $g \in G$, $g+g+ \cdots + g \in G^+$ implies $g \in G^+$.  Given a partially ordered group $(G,G^+)$ and $g,h \in G$, we say $g \leq h$ if $h-g \in G^+$ and $g < h$ if $h-g \in G^+ - \{0\}$.
\end{defn}

\begin{defn}
A \textbf{\emph{dimension group}} is an unperforated, partially ordered group $(G,G^+)$ which satisfies the following property (called the \textbf{\emph{Riesz interpolation property}}):
\begin{itemize}
\item Given any $a_1, b_1, a_2, b_2 \in G$ with $a_i \leq b_j$ for all $i,j \in \{1,2\}$, there is $c \in G$ such that $a_i \leq c \leq b_j$ for all $i,j \in \{1,2\}$.
\end{itemize}
\end{defn}

\begin{defn}
Let $(G,G^+)$ be a partially ordered group.  We call $u \in G^+$ an \textbf{\emph{order unit}} if for every $g \in G$ there is $n \in \mathbb{N}$ such that $g \leq nu$.  A dimension group with an order unit is called a \textbf{\emph{unital dimension group}} and is denoted by $(G,G^+,u)$.  
\end{defn}

In this paper, we will deal only with dimension groups which have the additional property that they are ``simple'':

\begin{defn}  Let $(G,G^+)$ be a partially ordered group.  An \textbf{\emph{order ideal}} is a subgroup $J \leq G$ such that
\begin{enumerate}
\item $J = J^+ - J^+$, where $J^+ = J \cap G^+$; and
\item whenever $0 \leq a \leq b$ and $b \in J$, it follows that $a \in J$.
\end{enumerate}
A dimension group is called \textbf{\emph{simple}} if it has no non-trivial order ideals.
\end{defn}

\begin{defn}
Let $(G,G^+,u)$ be a simple, unital dimension group.  A homomorphism $p : G \to \R$ is called a \textbf{\emph{state}} if $p$ is positive (i.e. $p(G^+) \subseteq [0,\infty)$) and $p(u) = 1$.  

An \emph{\textbf{infinitesimal}} on $(G,G^+,u)$ is an element $a \in G$ such that $p(a) = 0$ for every state $p$ of $G$.  The subgroup consisting of all infinitesimals on $(G,G^+,u)$ is denoted {\rm{Inf ($G$)}}.
\end{defn}

Observe that the quotient group $G/{\rm{Inf }}(G)$ has a natural induced ordering, i.e. $[g] >0$ if $g>0$, where $[g]$ is the coset of $g\in G$.  If $G$ has a distinguished order unit $u$, then $G/{\rm{Inf }}(G)$ inherits the distinguished order unit $[u]$.  Thus if $(G,G^+,u)$ is a unital dimension group then 
$(G/{\rm{Inf }}(G), (G/{\rm{Inf }}(G))^+,[u])$ is also a unital dimension group which has no infinitesimals other than the coset of $0$.

A simple, unital dimension group $G$ always has at least one state, and the states determine the order structure of the dimension group, in that
\[G^+ = \{g \in G : p(g) > 0 \textrm{ for all states }p\textrm{ of }G\} \cup \{0\}.\]
(This is essentially Corollary 4.2 of \cite{Ef}.)


\subsection{Dimension groups and dynamical systems}

Having laid out the abstract definition of a dimension group, we now turn to the connections between such objects and dynamics (for additional references, see \cite{HPS}, \cite{GPS}, \cite{D}, \cite{GMPS}).  To get started, given a minimal $\Z^d$-Cantor system $(X,\T)$, let $C(X,\Z)$ denote the collection of all continuous integer-valued functions on $X$.  Under addition, $C(X,\Z)$ forms a countable abelian group.  Next, define the set of coboundaries in $C(X,\Z)$, denoted $B_\T$, to be the subgroup of $C(X,\Z)$ generated by all functions of the form $f - f \circ \T^\v$ where $f \in C(X,\Z)$ and $\v \in \Z^d$.  Then set
\[
K^0(X,\T) = C(X,\Z) / B_\T.
\]
We define an ordering on $K^0(X,\T)$ by decreeing that a coset $f + B_\T \in K^0(X,\T)$ belongs to $K^0(X,\T)^+$ precisely when there is a $g \in f + B_\T$ such that $g(x) \geq 0$ for all $x \in X$.  We then have:

\begin{thm}[\cite{For}, Theorem 1.4] 
Let $(X,\T)$ be a Cantor minimal $\Z^d$-system.  Then $(K^0(X,\T), K^0(X,\T)^+,1 + B_\T)$ is a simple, unital dimension group.
\end{thm}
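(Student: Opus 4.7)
The plan is to verify in turn the three partial-order-group axioms, the order unit property, the unperforation and Riesz interpolation properties, and finally simplicity. The first two sets of properties are essentially formal; unperforation and Riesz interpolation constitute the main obstacle and require a substantial structural input.

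For the partial-order axioms on $(K^0(X,\T), K^0(X,\T)^+)$, closure of the positive cone under addition is immediate from closure of pointwise nonnegativity. For $G = G^+ - G^+$, any $f \in C(X,\Z)$ is integer-valued and bounded on the compact space $X$, so its positive and negative parts $f^\pm = \max(\pm f, 0)$ are finite nonnegative integer combinations of characteristic functions of clopen sets, hence lie in $C(X,\Z)$. For $G^+ \cap (-G^+) = \{0\}$, suppose $f + B_\T$ has a nonnegative representative $g$ and a nonpositive representative $-h$ (with $h \geq 0$); then $g + h$ is a nonnegative coboundary. Choosing any $\mu \in \mathcal{M}(X,\T)$ (nonempty by the standard fact noted in Section 2.1), each generator $f' - f' \circ \T^\v$ of $B_\T$ integrates to zero, so $\int(g+h)\,d\mu = 0$; since $g+h \geq 0$ and any invariant measure on a minimal system has full support, $g+h \equiv 0$, forcing $f + B_\T = 0$. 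For the order unit, any $f \in C(X,\Z)$ satisfies $|f| \leq n$ pointwise for some $n \in \N$, so $f + B_\T \leq n(1 + B_\T)$; moreover $1 + B_\T \neq 0$ since $1 \in B_\T$ would give $1 = \int 1\,d\mu = 0$.

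The main obstacle is unperforation and Riesz interpolation. My approach would be to realize $(X,\T)$ by a refining sequence of Kakutani-Rokhlin partitions whose atoms generate the topology on $X$; for $d=1$ this is the classical Bratteli-Vershik construction, and for $d > 1$ this is the higher-dimensional analogue developed in the cited Forrest paper. On a sufficiently fine K-R partition $\P_j$, every element of $K^0(X,\T)$ has a representative that is constant on each atom (one uses coboundaries of the form $\chi_A - \chi_A \circ \T^{-\v}$ to reshuffle integer values within a single $\T$-column without altering the coset), and the subgroup $G_j \subseteq K^0(X,\T)$ of such classes is isomorphic to a finite direct sum of copies of $\Z$, one per tower of $\P_j$, with positive cone and order unit inherited componentwise. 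Each $(G_j, G_j^+)$ is trivially unperforated and satisfies the Riesz interpolation property (both being finite, coordinate-wise matters over $\Z$), and both properties pass to the order-preserving direct limit $K^0(X,\T) = \varinjlim G_j$.

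Finally, for simplicity, let $J$ be a nonzero order ideal. Since $J = J^+ - J^+$, there is a nonzero positive element $f + B_\T \in J^+$ with representative $f \geq 0$, $f \not\equiv 0$. Then $U = \{x : f(x) \geq 1\}$ is clopen and nonempty, so by minimality of $\T$ together with compactness of $X$ there exist $\v_1, \ldots, \v_k \in \Z^d$ with $\bigcup_{i=1}^k \T^{-\v_i}(U) = X$, whence $\sum_i f \circ \T^{\v_i} \geq 1$ pointwise. Each $f \circ \T^{\v_i}$ is cohomologous to $f$, so $k(f + B_\T) \geq 1 + B_\T$ in $K^0(X,\T)$; the order-ideal property then forces $1 + B_\T \in J$, and the order-unit property yields $J = K^0(X,\T)$.
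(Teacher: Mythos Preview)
The paper does not prove this theorem; it is quoted from Forrest and stated without proof, so there is no in-paper argument to compare against. Your outline is the standard route and essentially Forrest's: the partial-order axioms, the order-unit property, and simplicity are all correct as you have written them (the simplicity argument via minimality and compactness is the usual clean one).

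For unperforation and Riesz interpolation, your direct-limit strategy via K-R partitions is also correct in spirit, but one phrasing is misleading and, read literally, false: it is \emph{not} the case that each $G_j$, with the order induced from $K^0(X,\T)$, is order-isomorphic to simplicially ordered $\Z^{t(j)}$. The induced positive cone on $G_j$ is generally strictly larger than the simplicial cone (a tower-sum vector with a negative entry can still represent a positive class in $K^0$, because a finer partition may reveal a nonnegative representative). The correct statement is that $(K^0(X,\T), K^0(X,\T)^+)$ is the ordered direct limit of the simplicial groups $(\Z^{t(j)}, (\Z_{\geq 0})^{t(j)})$ under the positive incidence maps coming from the K-R refinements; this holds because any nonnegative $g \in C(X,\Z)$ is $\P_j$-constant for $j$ large (the $\P_j$ generate the topology), so every positive class is eventually hit by some simplicial positive cone. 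With that reformulation your passage to the limit goes through, since unperforation and the Riesz property are preserved under direct limits along positive homomorphisms. The genuinely nontrivial input you defer to---the existence of a refining, generating sequence of K-R partitions for minimal $\Z^d$-Cantor systems when $d>1$---is exactly the content of the cited Forrest paper.
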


By the comments at the end of Section \ref{dimgroups}, we then also have:

\begin{cor} 
Let $(X,\T)$ be a Cantor minimal $\Z^d$-system and let 
\[G(X,T) = K^0(X,\T)/ {\rm{Inf }}(K^0(X,\T)).\]
Then  $(G(X,\T), G(X,\T)^+,[1 + B_\T])$ is a simple, unital dimension group.
\end{cor}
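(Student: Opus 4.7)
The plan is to combine the preceding theorem with the general observation already stated at the end of Section~\ref{dimgroups}. By that theorem, $(K^0(X,\T), K^0(X,\T)^+, 1 + B_\T)$ is a simple, unital dimension group. The paragraph just before the corollary already asserts that for any unital dimension group $(G, G^+, u)$, the quotient $(G/\mathrm{Inf}(G), (G/\mathrm{Inf}(G))^+, [u])$ is again a unital dimension group (with no nonzero infinitesimals). So the only content left to verify is that the quotient inherits \emph{simplicity}.

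For this, I would use the standard characterization of simplicity for a dimension group: $(G,G^+,u)$ is simple if and only if every nonzero element of $G^+$ is an order unit. Let $\pi : K^0(X,\T) \to G(X,\T)$ be the quotient map, and take any $[f] \in G(X,\T)^+ \setminus \{0\}$. By the definition of the induced order, $[f]$ has a representative $f \in K^0(X,\T)^+$; since $[f] \neq [0]$, this $f$ is nonzero in $K^0(X,\T)$ (indeed, $f \notin \mathrm{Inf}(K^0(X,\T))$, so in particular $f \neq 0$). Because $K^0(X,\T)$ is simple, $f$ is an order unit there, so for every $g \in K^0(X,\T)$ there exists $n \in \mathbb{N}$ with $g \leq nf$. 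Applying $\pi$ (which preserves the order by construction) yields $[g] \leq n[f]$ in $G(X,\T)$, and since every element of $G(X,\T)$ is of the form $[g]$, we conclude $[f]$ is an order unit.

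The main potential obstacle, and really the only subtle point, is to make sure the induced order on the quotient is set up so that (i) the given representative $f$ of a nonzero positive class can actually be chosen in $K^0(X,\T)^+$, and (ii) the quotient map is order preserving, so that an inequality $g \leq nf$ in $K^0(X,\T)$ descends to $[g] \leq n[f]$ in $G(X,\T)$. Both of these are immediate from the definition $[g] \geq 0 \iff g \geq 0$ of the induced order recorded in Section~\ref{dimgroups}, so after that unpacking the proof reduces to one line: the quotient of a simple unital dimension group by its infinitesimal subgroup is again simple, and the corollary follows.
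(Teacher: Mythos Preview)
Your proposal is correct and follows essentially the same route as the paper: invoke the preceding theorem (Forrest) to get that $K^0(X,\T)$ is a simple unital dimension group, then appeal to the remarks at the end of Section~\ref{dimgroups} about quotienting by the infinitesimal subgroup. In fact you are more careful than the paper, which leaves the preservation of \emph{simplicity} implicit; your verification via the ``every nonzero positive element is an order unit'' characterization fills that small gap cleanly.
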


Giordano, Putnam and Skau \cite{GPS} proved a converse of the preceding theorem, showing that any simple, unital dimension group $(G,G^+,u)$ other than $\Z$ can be realized as $(K^0(X,R),K^0(X,R)^+,1 + B_R)$ for a minimal $\Z$-Cantor system $(X,R)$.  Furthermore, they also showed that orbit equivalence of two minimal $\Z$-Cantor systems $(X,T)$ and $(Y,S)$ corresponds exactly with isomorphism of the systems' associated dimension groups $G(X,T)$ and $G(Y,S)$; this result was extended to actions of $\Z^d$ in \cite{GPS2} and \cite{GMPS}.  

We will see that these ideas carry over to the realm of speedups.  If one minimal $\Z^d$-Cantor system can be sped up to obtain a conjugate version of a second, then one can construct a surjective group homomorphism from the dimension group associated to the second system to the dimension group associated to the first.  This is our upcoming Lemma \ref{maintoptheorempart1}.

The infinitesimal subgroup ${\rm{Inf }}(K^0(X,\T))$ has an alternate characterization which will help us simplify $G(X,\T)$. We first need 
the following relationship between states on the dimension group $K^0(X,\T))$ and $\T$-invariant measures on $X$:

\begin{thm}[\cite{For}, Lemma 7.3] 
\label{statethm}
Let $(X,\T)$ be a Cantor minimal $\Z^d$-system.  Then:
\begin{enumerate}
\item Every $\mu \in \mathcal{M}(X,\T)$ induces a state $p_\mu$ on $(K^0(X,\T), K^0(X,\T)^+,1 + B_\T)$ by
\[p_\mu(f + B_\T) = \int f \, d \mu.\]
\item The map $\mu \mapsto p_\mu$ is a bijective correspondence between $\mathcal{M}(X,\T)$ and the set of states on $(K^0(X,\T), K^0(X,\T)^+,1 + B_\T)$.
\end{enumerate}
\end{thm}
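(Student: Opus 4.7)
The plan is to prove the two parts in order, with part (1) reducing to routine verifications and part (2) requiring the reconstruction of a $\T$-invariant measure from a given state.

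For part (1), I would first check that $p_\mu$ is well-defined on cosets. If $f \in B_\T$, then $f$ is an integer combination of functions of the form $g - g \circ \T^\v$, and each such function integrates to zero against $\mu$ because $\mu$ is $\T$-invariant. Hence $\int f \, d\mu$ depends only on $f + B_\T$. The map $p_\mu$ is additive by linearity of the integral; it is positive because a coset in $K^0(X,\T)^+$ has a nonnegative representative, whose integral is nonnegative; and it is unital because $\int 1 \, d\mu = \mu(X) = 1$.

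For injectivity in part (2), suppose $p_\mu = p_\nu$. Then $\int f \, d\mu = \int f \, d\nu$ for every $f \in C(X,\Z)$. Since indicator functions of clopen sets lie in $C(X,\Z)$, the regular Borel probability measures $\mu$ and $\nu$ agree on the clopen algebra that generates the Borel $\sigma$-algebra of the Cantor space $X$; uniqueness in Carathéodory's extension theorem then gives $\mu = \nu$.

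The main work is surjectivity. Given a state $p$, I would define $\mu$ on the clopen algebra by $\mu(A) = p(\mathbf{1}_A + B_\T)$. Nonnegativity is immediate from positivity of $p$; the value on $X$ is $1$ by the unit condition; and finite additivity on disjoint clopen sets follows from $\mathbf{1}_{A \sqcup B} = \mathbf{1}_A + \mathbf{1}_B$. Countable additivity on the clopen algebra is automatic: if a clopen set is the disjoint union of clopen sets, compactness forces the union to be finite. Carathéodory extension then produces a Borel probability measure $\mu$. For $\T$-invariance, observe that $\mathbf{1}_A - \mathbf{1}_{\T^{-\v}(A)} = \mathbf{1}_A - \mathbf{1}_A \circ \T^\v \in B_\T$, so $p$ assigns equal values to the two indicators, whence $\mu(A) = \mu(\T^{-\v}(A))$ for every clopen $A$ and every $\v \in \Z^d$; by a $\pi$-$\lambda$ argument this invariance passes to all Borel sets.

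Finally, I need to verify $p_\mu = p$ on all of $K^0(X,\T)$. Every $f \in C(X,\Z)$ is locally constant with finite range (by compactness of $X$), so it has the form $f = \sum_{n} n \cdot \mathbf{1}_{f^{-1}(n)}$ with finitely many nonzero terms. Applying $p$ to this representation and using how $\mu$ was defined on clopen sets yields $p(f + B_\T) = \sum_n n \cdot \mu(f^{-1}(n)) = \int f \, d\mu = p_\mu(f + B_\T)$. The main obstacle in the argument is the surjectivity step, and within it the subtle point that $\T$-invariance of $\mu$ must be extracted purely from the fact that $p$ vanishes on coboundaries; everything else reduces to properties of the Cantor space topology combined with Carathéodory extension.
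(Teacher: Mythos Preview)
The paper does not supply its own proof of this theorem; it is quoted as Lemma~7.3 of \cite{For} and used as a black box. Your argument is correct and is essentially the standard one: well-definedness and the state axioms in part~(1) are routine, injectivity follows because clopen indicators separate regular Borel measures on a Cantor space, and surjectivity is obtained by defining $\mu$ on the clopen algebra via $p$, using compactness to upgrade finite additivity to a premeasure, extending by Carath\'eodory, and reading off $\T$-invariance from the vanishing of $p$ on coboundaries.
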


We can then note that the infinitesimals of $(K^0(X,\T), K^0(X,\T)^+,1 + B_\T)$ are exactly the cosets of functions which integrate to $0$ against every $\T$-invariant probability measure.  Defining
\[
Z_\T = \left\{f \in C(X,\Z) : \int f \, d\mu = 0 \textrm{ for all } \mu \in \mathcal{M}(X,\T)\right\},
\]
we can conclude that $ {\rm{Inf}}(K^0(X,\T)) \cong  Z_\T / B_\T$, and finally that
\[
G(X,\T) = K^0(X,\T)/{\rm{Inf}}(K^0(X,\T)) \cong C(X,\Z)/Z_\T,
\]
where the unit of $G(X,\T)$ is exactly the coset $1+Z_\T$.  Note that each state $p$ on $K^0(X,\T)$ then induces a state $\overline{p}$
on $G(X,\T)$ simply by defining, for $h\in K^0(X,\T)$, $\overline{p}(h+{\rm{Inf}}(K^0(X,\T) ) = p(h)$.

%
%

\section{Speedups, invariant measures and orbit equivalence}

In this section, we explore the relationship between speedups and the associated sets of invariant measures, along with how these relate to orbit equivalence.  We also give a result about speedups and the dimension group.

We begin by comparing the sets of invariant measures for a minimal Cantor system and a minimal speedup of that Cantor system.

%
%

\begin{lemma}
\label{speedupmeasures}
Let $(X,\T)$ be a minimal $\Z^{d_1}$-Cantor system.  If minimal $\Z^{d_2}$-Cantor system $(X,\S)$ is a speedup of $(X,\T)$, then 
$\mathcal{M}(X,\T) \subseteq \mathcal{M}(X,\S)$.
\end{lemma}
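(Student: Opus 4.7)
The plan is to fix $\mu \in \mathcal{M}(X,\T)$ and $\v \in \Z^{d_2}$, and show directly that $\mu$ is preserved by $\S^\v$. Because $\S$ moves points along $\T$-orbits via the speedup cocycle, the natural strategy is to partition $X$ according to the value of $\p(\cdot,\v)$ and then use $\T$-invariance slice by slice.

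Concretely, for each $\w \in \Z^{d_1}$ set
\[
A_\w = \{x \in X : \p(x,\v) = \w\}.
\]
These sets are pairwise disjoint and cover $X$, and by Theorem \ref{thm1.3} each $A_\w$ is Borel (so $\mu$ gives it a well-defined measure). On $A_\w$ we have $\S^\v = \T^\w$ by the definition of the speedup cocycle; hence $\S^\v(A_\w) = \T^\w(A_\w)$. Crucially, because $\S^\v$ is a homeomorphism (in particular a bijection of $X$), the family $\{\T^\w(A_\w)\}_{\w \in \Z^{d_1}}$ is another Borel partition of $X$:
\[
X \;=\; \bigsqcup_{\w \in \Z^{d_1}} \T^\w(A_\w).
\]

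With these two partitions in hand, the invariance computation is routine. For any Borel $B \subseteq X$,
\[
\mu\bigl((\S^\v)^{-1}(B)\bigr) \;=\; \sum_{\w} \mu\bigl(A_\w \cap \T^{-\w}(B)\bigr) \;=\; \sum_{\w} \mu\bigl(\T^\w(A_\w) \cap B\bigr) \;=\; \mu(B),
\]
where the middle equality applies $\T$-invariance of $\mu$ termwise and the last equality uses that the sets $\T^\w(A_\w)$ partition $X$. Since $\v$ was arbitrary, $\mu \in \mathcal{M}(X,\S)$.

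The only nontrivial step is the Borel measurability of the slicing sets $A_\w$, which is where Theorem \ref{thm1.3} is invoked; aside from that, the proof is really just the observation that $\S^\v$ being a bijection forces $\{A_\w\}$ and $\{\T^\w(A_\w)\}$ to be two partitions of $X$ compatible with $\T$-invariance of $\mu$.
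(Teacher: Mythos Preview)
Your argument is essentially the paper's: slice $X$ by how $\S^\v$ acts through $\T$, then use $\T$-invariance term by term. The paper partitions a fixed Borel set $A$ (showing $\mu(\S^\v(A)) = \mu(A)$) rather than partitioning all of $X$ and then intersecting with a test set, but this is cosmetic.

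One technical point deserves attention. You invoke Theorem~\ref{thm1.3} to get that each $A_\w = \{x : \p(x,\v) = \w\}$ is Borel, but that theorem has freeness of $(X,\T)$ as a hypothesis, and the present lemma does not assume freeness. The paper sidesteps this by using instead the sets $\{x \in A : \S^\v(x) = \T^\w(x)\}$, which are closed---hence Borel---simply because $\S^\v$ and $\T^\w$ are continuous. (In the non-free case these closed sets can overlap, but one may disjointify them along an enumeration of $\Z^{d_1}$ while retaining $\S^\v = \T^\w$ on the $\w$-piece, and the computation goes through unchanged.) If you replace your $A_\w$ with these closed sets, your proof works without any appeal to Theorem~\ref{thm1.3}.
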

\begin{proof} Let $\mu \in \mathcal{M}(X,\T)$ and let $A \subseteq X$ be Borel.  Fix $\v \in \Z^{d_2}$ and for each $\w \in \Z^{d_1}$, let $A_\w = \{x \in A : \S^\v(x) = \T^\w(x)\}$.  Notice that $\{A_\w : \w \in \Z^{d_1}\}$ is a Borel partition of $A$ and since $\S^\v$ is a homeomorphism, $\{\T^\w(A_\w)  : \w \in \Z^{d_1}\}$ must partition $\S^\v(A)$.  Therefore:
\begin{align*}
\mu(\S^\v(A)) & =  \mu \left(\bigsqcup_{\w \in \Z^{d_1}} \T^\w(A_\w)\right) \\
& = \sum_{\w \in \Z^{d_1}} \mu(\T^\w(A_\w))  \\
& = \sum_{\w \in \Z^{d_1}} \mu(A_\w) \\
& = \mu\left(\bigsqcup_{\w \in \Z^{d_1}} A_\w\right)  \\
& = \mu(A).
\end{align*}
Thus $\mu \in \mathcal{M}(X,\S)$ as desired. 
\end{proof}

We need only slightly modify this to find the relationship between the sets of invariant measures of two minimal Cantor systems when one is conjugate to a speedup of the other.

%
%

\begin{lemma} \label{maintoptheorempart2}
Let $(X_1, \T_1)$ be a minimal $\Z^{d_1}$-Cantor system with speedup $(X_1,\S)$ that is a minimal $\Z^{d_2}$-Cantor system.  Suppose $(X_1,\S)$ is conjugate to  $(X_2, \T_2)$.
Then there is a homeomorphism $F : X_1 \to X_2$ such that $F^*: \mathcal{M}(X_1,\T_1) \to \mathcal{M}(X_2,\T_2)$ is injective. 
\end{lemma}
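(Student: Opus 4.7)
The plan is to take $F$ to be the given conjugacy from $(X_1,\S)$ to $(X_2,\T_2)$ and define $F^*$ to be the pushforward of measures, i.e. for $\mu\in\mathcal{M}(X_1,\T_1)$ and Borel $A\subseteq X_2$,
\[
F^*(\mu)(A) = \mu(F^{-1}(A)).
\]
Since $F$ is a homeomorphism, $F^*$ is automatically an injective map from the space of Borel probability measures on $X_1$ into the space of Borel probability measures on $X_2$. So the only real content is to show that $F^*(\mathcal{M}(X_1,\T_1))\subseteq \mathcal{M}(X_2,\T_2)$.

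To see this, fix $\mu\in\mathcal{M}(X_1,\T_1)$. By Lemma~\ref{speedupmeasures}, $\mu\in\mathcal{M}(X_1,\S)$, so $\mu$ is invariant under every $\S^\v$. Because $F$ conjugates $\S$ to $\T_2$, we have $F\circ \S^\v = \T_2^\v\circ F$ for all $\v\in\Z^{d_2}$, and therefore for any Borel $A\subseteq X_2$ and any $\v\in\Z^{d_2}$,
\[
F^*(\mu)\bigl(\T_2^\v(A)\bigr) = \mu\bigl(F^{-1}\T_2^\v(A)\bigr) = \mu\bigl(\S^\v F^{-1}(A)\bigr) = \mu\bigl(F^{-1}(A)\bigr) = F^*(\mu)(A).
\]
Thus $F^*(\mu)$ is $\T_2$-invariant, and $F^*$ genuinely maps $\mathcal{M}(X_1,\T_1)$ into $\mathcal{M}(X_2,\T_2)$.

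Injectivity is immediate from the fact that $F$ is a homeomorphism: if $F^*(\mu_1) = F^*(\mu_2)$, then for every Borel $B\subseteq X_1$, setting $A = F(B)$ gives $\mu_1(B) = F^*(\mu_1)(A) = F^*(\mu_2)(A) = \mu_2(B)$, so $\mu_1 = \mu_2$.

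There is no real obstacle here; the argument is essentially a bookkeeping exercise that combines Lemma~\ref{speedupmeasures} (so that $\T_1$-invariant measures are already $\S$-invariant) with the standard fact that a topological conjugacy induces a bijection on invariant measures via pushforward. The only reason the conclusion asserts injectivity rather than bijectivity is that we have no a priori guarantee that every $\T_2$-invariant measure arises from a $\T_1$-invariant one—the containment $\mathcal{M}(X_1,\T_1)\subseteq\mathcal{M}(X_1,\S)$ from Lemma~\ref{speedupmeasures} may well be strict.
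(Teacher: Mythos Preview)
Your proof is correct and is essentially identical to the paper's own argument: both take $F$ to be the conjugacy, invoke Lemma~\ref{speedupmeasures} to get $\mathcal{M}(X_1,\T_1)\subseteq\mathcal{M}(X_1,\S)$, verify via the conjugacy relation that $F^*$ lands in $\mathcal{M}(X_2,\T_2)$, and deduce injectivity from $F$ being a homeomorphism. Your added remark explaining why only injectivity (not bijectivity) is asserted is a nice clarification.
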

\begin{proof}
Let $F$ be a homeomorphism $F:X_1\rightarrow X_2$ such that $F\circ \S^{\v} = \T_2^{\v}\circ F$, as given to us by the conjugacy between 
$(X_1,\S)$ and  $(X_2, \T_2)$.  We know by Lemma \ref{speedupmeasures} that $\mathcal{M}(X_1,\T_1) \subseteq \mathcal{M}(X_1,\S)$.  Given 
$\mu\in \mathcal{M}(X_1,\T_1)$ and Borel set $A\subseteq X_2$, note that 
\[ 
F^*\left(\mu\right)(T_2^{\v}A) = \mu\left(F^{-1}\left(T_2^{\v}A\right)\right)  = \mu\left(S^{\v}\left(F^{-1} A\right)\right) \\
 =\mu\left(F^{-1} (A)\right)  = F^*\left(\mu\right)(A), 
\]
and thus $F^*$ does send measures in $\mathcal{M}(X_1,\T_1)$ to $\mathcal{M}(X_2,\T_2)$.  If 
$\mu, \nu \in  \mathcal{M}(X_1,\T_1)$ are such that $\mu \neq \nu$, then there is some Borel set $A\subseteq X_1$ with $\mu(A)\neq \nu(A)$.  
But then $F^*(\mu)$ and $F^*(\nu)$ give different values to the Borel set $F(A)\subseteq X_2$, showing that $F^*$ is injective.  \end{proof}

If the systems mentioned in Lemma \ref{speedupmeasures} are uniquely ergodic, then we can rephrase that result as follows.

%
%

\begin{cor} \label{uniqergcor} 
Let $(X,\T,\mu)$ be a uniquely ergodic, minimal $\Z^{d_1}$-Cantor system.  If uniquely ergodic, minimal $\Z^{d_2}$-Cantor system 
$(X,\S,\nu)$ is a speedup of $(X,\T)$, then 
\begin{enumerate}
\item $\mu = \nu$; and
\item $(X,\S)$ and $(X,\T)$ share the same clopen value set, meaning
\[\{\mu(E) : E \subseteq X \textrm{ is clopen}\} = \{\nu(E) : E \subseteq X \textrm{ is clopen}\}. \]
\end{enumerate}
\end{cor}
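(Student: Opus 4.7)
The plan is to use Lemma \ref{speedupmeasures} directly; statement (1) follows almost immediately, and statement (2) is then a triviality.

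For part (1), I would start by invoking Lemma \ref{speedupmeasures}, which gives the inclusion $\mathcal{M}(X,\T) \subseteq \mathcal{M}(X,\S)$. Since $(X,\T,\mu)$ is uniquely ergodic, the left-hand side is exactly $\{\mu\}$, so $\mu$ must lie in $\mathcal{M}(X,\S)$. Since $(X,\S,\nu)$ is also uniquely ergodic, $\mathcal{M}(X,\S) = \{\nu\}$, which forces $\mu = \nu$.

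For part (2), once $\mu = \nu$ has been established, the two sets
\[
\{\mu(E) : E \subseteq X \text{ is clopen}\} \quad \text{and} \quad \{\nu(E) : E \subseteq X \text{ is clopen}\}
\]
are literally the same set, since they are produced by applying the same set function to the same family of clopen subsets of $X$.

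There is no substantive obstacle here: this corollary is essentially a one-line consequence of Lemma \ref{speedupmeasures} together with the hypothesis of unique ergodicity on both systems. The only thing worth flagging is that the statement implicitly uses that the phase space $X$ is identical for both actions (so that ``clopen subsets'' refer to the same collection in each system), which is built into the hypothesis that $(X,\S)$ is a speedup of $(X,\T)$.
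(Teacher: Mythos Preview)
Your proposal is correct and matches the paper's approach exactly: the paper does not even write out a proof for this corollary, simply introducing it as a rephrasing of Lemma~\ref{speedupmeasures} under the additional hypothesis of unique ergodicity. Your argument---that $\mathcal{M}(X,\T)\subseteq\mathcal{M}(X,\S)$ together with both sets being singletons forces $\mu=\nu$, whence (2) is immediate---is precisely the intended reasoning.
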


If, in addition, the systems have one-dimensional actions, then we can say even more:

%
%

\begin{cor} \label{uniqergcor1} 
Let $(X,T)$ be a uniquely ergodic, minimal $\Z$-Cantor system.  If uniquely ergodic, minimal $\Z$-Cantor system 
$(X,S)$ is a speedup of $(X,T)$, then $(X,S)$ and $(X,T)$ are orbit equivalent.
\end{cor}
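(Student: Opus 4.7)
The plan is to deduce Corollary \ref{uniqergcor1} directly from Corollary \ref{uniqergcor} together with the Giordano--Putnam--Skau orbit equivalence theorem (recalled in the introduction): two minimal $\Z$-Cantor systems are orbit equivalent if and only if there is a homeomorphism between their phase spaces which induces a bijection between their sets of invariant Borel probability measures.

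First I would extract from Corollary \ref{uniqergcor}(1) the equality $\mu = \nu$ of the unique invariant measures of $T$ and $S$, so that $\mathcal{M}(X,T) = \{\mu\} = \{\nu\} = \mathcal{M}(X,S)$. Then I would observe that the identity map $F = \mathrm{id}_X : X \to X$ is a homeomorphism whose pushforward $F^*$ sends $\mu$ to $\mu = \nu$, giving a (trivially) bijective correspondence $\mathcal{M}(X,T) \to \mathcal{M}(X,S)$. The Giordano--Putnam--Skau criterion then yields the orbit equivalence of $(X,T)$ and $(X,S)$.

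An equivalent route would use the Glasner--Weiss characterization for uniquely ergodic minimal $\Z$-Cantor systems, namely that orbit equivalence is detected by equality of clopen value sets, which is exactly the conclusion of Corollary \ref{uniqergcor}(2). Either way there is no real obstacle: the whole substance of the corollary lies in Corollary \ref{uniqergcor}, and the cited orbit equivalence theorem does the remaining work. The one subtlety worth flagging is that, although every $S$-orbit is automatically contained in a $T$-orbit by the definition of a speedup, the identity map need not itself implement the orbit equivalence; it is being used only to verify the measure-set bijection condition required by Giordano--Putnam--Skau, and the homeomorphism that theorem ultimately produces as an honest orbit equivalence may well differ from $\mathrm{id}_X$.
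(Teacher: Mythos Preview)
Your argument is correct and is essentially the same as the paper's: the paper's one-line proof simply cites Theorem 2.2 of \cite{GPS}, which is exactly the Giordano--Putnam--Skau criterion you invoke, and the input needed to apply it is precisely what Corollary \ref{uniqergcor} supplies. You have merely made explicit the step (via the identity map and $\mu=\nu$) that the paper leaves to the reader; your caveat that $\mathrm{id}_X$ need not itself be the orbit equivalence is a nice clarification.
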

\begin{proof} 
This follows from Theorem 2.2 in \cite{GPS}. 
\end{proof}

And if the uniquely ergodic, minimal $\Z$-Cantor systems are in fact odometers, we get an even stronger result:

%
%

\begin{cor} \label{odometercorollary}
Let $(X_1, T_1)$ and $(X_2, T_2)$ be two $\Z$-odometers.  If $(X_2,T_2)$ is conjugate to a speedup of $(X_1,T_1)$, then $(X_1, T_1)$ and $(X_2, T_2)$ are themselves conjugate.
\end{cor}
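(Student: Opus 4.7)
The plan is to reduce the problem to showing that the clopen value sets of the two odometers coincide, then leverage the structure of $\Z$-odometers to promote that equality to equality of first cohomology groups. By Theorem \ref{basicodomprops}(3), both $(X_1, T_1)$ and $(X_2, T_2)$ are uniquely ergodic; the intermediate speedup $(X_1, S)$ inherits unique ergodicity and minimality from being conjugate to $(X_2, T_2)$. Corollary \ref{uniqergcor} then applies to the speedup $(X_1, S)$ of $(X_1, T_1)$ and gives that the two systems share a common invariant measure and an identical clopen value set. Because clopen value sets are evidently preserved under topological conjugacy, transferring across $(X_1, S) \cong (X_2, T_2)$ shows the clopen value sets of $(X_1, T_1)$ and $(X_2, T_2)$ agree.

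To finish I would identify this numerical invariant with the first cohomology group of the odometer. Realize each $(X_i, T_i)$ as $(X_{\gothG_i}, \sigma_{\gothG_i})$, where $\gothG_i = \{G_{i,j}\}_j$ and $G_{i,j} = a_{i,j}\Z$ (every finite-index subgroup of $\Z$ has this form, with $a_{i,j} \mid a_{i,j+1}$). By Theorem \ref{KRpartthm}, each atom of the K-R partition $\P_j$ has measure $1/a_{i,j}$, and the sequence $\{\P_j\}$ refines and generates the topology, so every clopen subset of $X_i$ is a finite union of atoms of some $\P_j$. Therefore the clopen value set of $(X_i, T_i)$ is exactly $H(T_i) \cap [0,1]$, where $H(T_i) = \bigcup_j \frac{1}{a_{i,j}}\Z$ is the first cohomology group in the Giordano-Putnam-Skau description. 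Equal clopen value sets then force $H(T_1) = H(T_2)$, and Theorem \ref{isomorphismtest}(1) produces a conjugacy between $(X_1, T_1)$ and $(X_2, T_2)$.

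The main obstacle is the second step, where I must carefully identify the clopen value set with $H(T_i) \cap [0,1]$ in both directions. The inclusion that every clopen-set measure lies in $H(T_i)$ follows from clauses (2) and (3) of Theorem \ref{KRpartthm}: a clopen set, being a finite union of basic open sets in a basis generated by the $\P_j$, is eventually a finite union of $\P_j$-atoms, each of measure $1/a_{i,j}$. The reverse inclusion, that every fraction $k/a_{i,j} \in [0,1]$ arises, follows by exhibiting any union of $k$ atoms of $\P_j$ as a witness. Everything else is bookkeeping using that conjugacies preserve the clopen value set and that, for $\Z$-odometers, equality of first cohomology groups is a complete conjugacy invariant.
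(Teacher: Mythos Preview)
Your argument is correct, but it follows a different path from the paper's. The paper proceeds in two black-box steps: first, Corollary~\ref{uniqergcor1} (which cites Theorem 2.2 of \cite{GPS}) shows that the speedup relation forces orbit equivalence of $(X_1,T_1)$ and $(X_2,T_2)$; second, the rigidity theorem of Boyle and Tomiyama \cite{BT} says that orbit equivalent $\Z$-odometers are conjugate. Your route instead stays within the paper's own framework: you extract equality of clopen value sets from Corollary~\ref{uniqergcor}, then compute directly that for a $\Z$-odometer given by $\gothG = \{a_j\Z\}$ the clopen value set is exactly $H(T)\cap[0,1]$ with $H(T)=\bigcup_j \frac{1}{a_j}\Z$, and finally invoke Theorem~\ref{isomorphismtest}(1). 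This is more hands-on and avoids the external Boyle--Tomiyama citation, at the cost of a short explicit computation; the paper's proof is terser but leans on heavier imported results. One small point you leave implicit: Theorem~\ref{isomorphismtest} is stated for \emph{free} odometers, and to pass from $H(T_1)\cap[0,1]=H(T_2)\cap[0,1]$ to $H(T_1)=H(T_2)$ you use that $\Z\leq H(T_i)$; both are automatic here since a $\Z$-odometer on a Cantor space has $a_j\to\infty$ and hence $\bigcap_j G_j=\{0\}$, but it would be worth saying so.
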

\begin{proof} $(X_1, T_1)$ and $(X_2, T_2)$ are orbit equivalent by Corollary \ref{uniqergcor1}.  Orbit equivalent $\Z$-odometers are isomorphic, and therefore conjugate, by the rigidity theorem of Boyle and Tomiyama \cite{BT} (see also Corollary 5.9 in \cite{GPS3}).
\end{proof}

However, our main interest is in higher-dimensional odometers.  We summarize what we know thus far for that context.

%
%

\begin{thm}
\label{mainodometertheorem} 
Let $(X_1, \T_1, \mu_1)$ be a $\Z^{d_1}$-odometer and let $(X_2, \T_2, \mu_2)$ be a $\Z^{d_2}$-odometer.  Let $\C \subseteq \Z^{d_1}$ be any cone.  Then, for the following statements:
\begin{enumerate}
\item There is a $\C$-speedup of $(X_1,\T_1)$ that is conjugate to $(X_2,\T_2)$. 
\item There is a homeomorphism $F : X_1 \to X_2$ such that $F^*(\mu_1) = \mu_2$.  
\item $(X_1, \T_1)$ and $(X_2, \T_2)$ are orbit equivalent.
\item $(X_1, \T_1)$ and $(X_2, \T_2)$ have the same clopen value sets.
\end{enumerate}
we have the implications (1) $\so$ (2) $\iff$ (3) $\iff$ (4).
\end{thm}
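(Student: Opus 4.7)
The plan is to establish the implications as a cycle combined with a direct argument: first (1) $\Rightarrow$ (2), then (2) $\Rightarrow$ (4), then (4) $\Rightarrow$ (3), and finally (3) $\Rightarrow$ (2), with the straightforward (2) $\Rightarrow$ (3) reading directly off orbit-equivalence machinery. Each step will lean on a specific prior fact: unique ergodicity of odometers (Theorem \ref{basicodomprops}), Lemma \ref{maintoptheorempart2}, and the classical theory of Cantor measures and orbit equivalence.

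For (1) $\Rightarrow$ (2), I would invoke Lemma \ref{maintoptheorempart2} directly. If a $\C$-speedup $\S$ of $\T_1$ is conjugate to $\T_2$, then the lemma provides a homeomorphism $F : X_1 \to X_2$ whose push-forward $F^*$ is injective from $\mathcal{M}(X_1, \T_1)$ to $\mathcal{M}(X_2, \T_2)$. Since both systems are $\Z^d$-odometers, Theorem \ref{basicodomprops}(3) gives $\mathcal{M}(X_1, \T_1) = \{\mu_1\}$ and $\mathcal{M}(X_2, \T_2) = \{\mu_2\}$, forcing $F^*(\mu_1) = \mu_2$. This is the quickest step and needs no further input.

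For (2) $\iff$ (4), the forward direction is a one-line computation: if $F$ is a homeomorphism with $F^*(\mu_1) = \mu_2$, then for any clopen $E \subseteq X_1$ the set $F(E) \subseteq X_2$ is clopen with $\mu_2(F(E)) = \mu_1(E)$, so the clopen value sets agree. The reverse implication is the substantive one; here I would cite the classical theorem of Glasner–Weiss (strengthened by Akin) stating that two Cantor spaces equipped with full-support, non-atomic Borel probability measures are measure-homeomorphic precisely when they share the same clopen value set. Both $\mu_1$ and $\mu_2$ satisfy the hypotheses, since the explicit formula in Theorem \ref{basicodomprops}(3) shows each assigns positive measure to every non-empty clopen set and no mass to points.

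For the equivalence (2) $\iff$ (3), I would appeal to the $\Z^d$-orbit-equivalence results of Giordano–Matui–Putnam–Skau \cite{GMPS}: for minimal uniquely ergodic $\Z^d$-Cantor systems, orbit equivalence is characterized by the existence of a homeomorphism carrying the unique invariant measure of one to that of the other, i.e.\ exactly statement (2). The forward direction (3) $\Rightarrow$ (2) extracts such a homeomorphism from any orbit equivalence; the converse builds an orbit equivalence from the measure-preserving homeomorphism via the standard refinement-by-K-R-partition construction, which is feasible thanks to Theorem \ref{KRpartthm}.

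\textbf{Main obstacle.} The principal technical point is the step (4) $\Rightarrow$ (2), since the Glasner–Weiss/Akin clopen-value-set theorem must be applied cleanly to the odometer measures. One must verify that $\mu_1$ and $\mu_2$ genuinely give ``good'' Cantor measures (no atoms, full support on every clopen), and that the clopen value set of an odometer is the countable set $\{ k [\Z^{d}:G_j]^{-1} : j \ge 1,\ 0 \le k \le [\Z^{d}:G_j]\}$ generated by Theorem \ref{basicodomprops}(3), so that the two odometers really are being compared on the same invariant. The rest of the chain is essentially bookkeeping around prior results.
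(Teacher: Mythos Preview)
Your proposal is correct and follows essentially the same route as the paper. The paper's proof of (1) $\Rightarrow$ (2) is identical to yours (Lemma \ref{maintoptheorempart2} plus unique ergodicity of odometers), and for the equivalence (2) $\iff$ (3) $\iff$ (4) the paper simply cites Corollaries 2.6 and 2.7 of \cite{GMPS} in one line, whereas you unpack that citation into its constituent pieces (the Glasner--Weiss/Akin clopen-value-set criterion for (2) $\iff$ (4) and the GMPS orbit-equivalence machinery for (2) $\iff$ (3)); the content is the same, and your ``main obstacle'' is already absorbed in those cited results.
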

\begin{proof} (1) $\so$ (2) comes from Lemma \ref{maintoptheorempart2}, together with the fact that odometers are uniquely ergodic.  The equivalence of (2), (3) and (4) follows from Corollaries 2.6 and 2.7 of \cite{GMPS}. \end{proof}

In the next section we will prove a partial converse to this theorem, where we must assume $d_2=1$.  But first we recall that
the invariant measures for $(X,\T)$ correspond to states on $K^0(X,\T)$.  These in turn induce states on the dimension group 
$G(X,T) = K^0(X,\T)/{\rm{Inf}}(K^0(X,\T))$ and  thus Lemma \ref{speedupmeasures} suggests a connection between speedups and maps 
between the dimension groups.  The precise nature of this relationship is as follows:

%
%

\begin{thm} \label{maintoptheorempart1}
Let $(X_1, \T_1)$ be a minimal $\Z^{d_1}$-Cantor system with dimension group $G_1 = C(X_1,\Z)/Z_{\T_1}$.  Let $(X_2, \T_2)$ be a minimal $\Z^{d_2}-$Cantor system, with dimension group $G_2 = C(X_2,\Z)/Z_{\T_2}$.  

If there is a speedup of $\T_1$ conjugate to $\T_2$, then there is a surjective group homomorphism $\varphi : (G_2, G_2^+,1 + Z_{\T_2}) \to (G_1, G_1^+,1 + Z_{\T_1})$ such that $\varphi(G_2^+) = G_1^+$ and $\varphi(1 + Z_{\T_2}) = 1 + Z_{\T_1}$.
\end{thm}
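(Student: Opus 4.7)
The plan is to use the conjugacy between the speedup and $(X_2, \T_2)$ to pull back continuous integer-valued functions on $X_2$ to such functions on $X_1$, and then show that this pullback descends to a well-defined surjection on dimension groups because of the containment of invariant-measure sets supplied by Lemma \ref{speedupmeasures}.

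Let $(X_1, \S)$ be the given speedup of $(X_1, \T_1)$, and let $F : X_1 \to X_2$ be a conjugacy, so that $F \circ \S^\v = \T_2^\v \circ F$ for every $\v \in \Z^{d_2}$. Define the pullback $F^* : C(X_2, \Z) \to C(X_1, \Z)$ by $F^*(f) = f \circ F$; this is a group isomorphism since $F$ is a homeomorphism. The key step is to verify that $F^*(Z_{\T_2}) \subseteq Z_{\T_1}$. Given any $\nu \in \mathcal{M}(X_1, \T_1)$, Lemma \ref{speedupmeasures} gives $\nu \in \mathcal{M}(X_1, \S)$, and since $F$ conjugates $\S$ to $\T_2$, the pushforward $F_*\nu$ lies in $\mathcal{M}(X_2, \T_2)$. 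Hence for $f \in Z_{\T_2}$,
\[
\int F^*(f) \, d\nu \; = \; \int f \circ F \, d\nu \; = \; \int f \, d(F_*\nu) \; = \; 0,
\]
so $F^*(f) \in Z_{\T_1}$ as desired.

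Consequently the composition $C(X_2, \Z) \to C(X_1, \Z) \to G_1$ annihilates $Z_{\T_2}$ and descends to a well-defined group homomorphism $\varphi : G_2 \to G_1$ sending $f + Z_{\T_2}$ to $f \circ F + Z_{\T_1}$. Surjectivity is automatic, since any $h + Z_{\T_1} \in G_1$ is the image of $(h \circ F^{-1}) + Z_{\T_2}$. Preservation of the order unit holds because the constant function $1$ pulls back to $1$. The equality $\varphi(G_2^+) = G_1^+$ follows by chasing positive representatives in both directions: if $g + Z_{\T_2} \in G_2^+$ has a representative $g \geq 0$, then $g \circ F \geq 0$ represents $\varphi(g + Z_{\T_2}) \in G_1^+$; conversely, if $h + Z_{\T_1} \in G_1^+$ has a representative $h \geq 0$, then $h \circ F^{-1} \geq 0$ represents a preimage in $G_2^+$.

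The only substantive step is the measure-theoretic containment $F^*(Z_{\T_2}) \subseteq Z_{\T_1}$; this is precisely where the speedup hypothesis enters, via Lemma \ref{speedupmeasures}, while everything else is formal manipulation of pullbacks and quotients. I expect no further obstacles. It is worth noting that $\varphi$ generally fails to be injective because the inclusion $\mathcal{M}(X_1, \T_1) \subseteq \mathcal{M}(X_1, \S)$ can be proper, which is consistent with the statement asking only for a surjection rather than an isomorphism.
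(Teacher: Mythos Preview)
Your proof is correct and takes essentially the same approach as the paper. The paper factors the map through the intermediate dimension group $G = C(X_1,\Z)/Z_{\S}$ of the speedup (conjugacy gives an isomorphism $G_2 \to G$, and then $Z_{\S} \subseteq Z_{\T_1}$ from Lemma~\ref{speedupmeasures} gives the quotient $G \to G_1$), whereas you compose these two steps into one by directly verifying $F^*(Z_{\T_2}) \subseteq Z_{\T_1}$; the substance is identical.
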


\begin{proof} Let $\S$ be the speedup of $\T_1$ conjugate to $\T_2$.   Let ${G} = C(X_1,\Z)/Z_{\S}$ so that $({G}, {G}^+,1 + Z_{\S})$ is the unital dimension group associated to $(X_1,\S)$.  The conjugacy between $(X_1,\S)$ and $(X_2, \T_2)$ induces a unital dimension group isomorphism 
$\varphi_1 :  (G_2, G_2^+,1 + Z_{\T_2}) \to ({G},{G}^+,1 + Z_{\S})$.  Define $\varphi_2 : ({G},{G}^+,1 + Z_{\S}) \to (G_1, G_1^+,1+Z_{\T_1})$ by
\[\varphi_2(g + Z_{\S}) = g + Z_{\T_1}.\]
By Lemma \ref{speedupmeasures}, $\mathcal{M}(X,\T_1) \subseteq \mathcal{M}(X,\S)$, so $Z_{\S} \leq Z_{\T_1}$.   Therefore $\varphi_2$ is well-defined and surjective.  The function $\varphi = \varphi_2 \circ \varphi_1$ gives the desired group homomorphism.
\end{proof}

%
%

\section{A converse of Theorem \ref{mainodometertheorem}}  In this section, we prove that the converse of (1) $\so$ (2,3,4) of Theorem \ref{mainodometertheorem} holds when $d_2 = 1$.  This will be Theorem \ref{homeoimpliesspeedup}; we first need a pair of preliminary lemmas:

%
%

\begin{lemma} \label{GWodom} Let $(X,\T, \mu)$ be a $\Z^d$-odometer.  
\begin{enumerate}
\item Given two disjoint, clopen subsets $A, B \subseteq X$ with $\mu(A) \leq \mu(B)$, there exists a clopen subset $B' \subseteq B$ with $\mu(A) = \mu(B')$.
\item Given two disjoint, clopen subsets $A, B \subseteq X$ with $\mu(A) = \mu(B)$ and any partition of $A$ into clopen subsets $A_1, ..., A_n$, there is a partition of $B$ into clopen subsets $B_1, ..., B_n$ with $\mu(A_j) = \mu(B_j)$ for all $j$.
\end{enumerate}
\end{lemma}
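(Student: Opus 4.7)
The plan is to leverage the standard sequence of K-R partitions $\{\P_j\}$ for $(X,\T)$ provided by Theorem \ref{KRpartthm}. Recall that the $\P_j$'s refine, generate the topology, and each atom of $\P_j$ has $\mu$-measure exactly $[\Z^d : G_j]^{-1}$; moreover, the diameters of atoms of $\P_j$ tend to zero as $j \to \infty$.

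The critical preliminary claim is that every clopen subset $C \subseteq X$ equals a finite disjoint union of atoms of some $\P_J$. To establish this, note that since the atoms of the $\P_j$'s generate the topology and have shrinking diameters, every point $x \in C$ lies in some atom of some $\P_{j(x)}$ which is entirely contained in $C$. Compactness of $C$ yields a finite subcover, and the refinement property allows us to pick a single $J$ larger than all the finitely many indices $j(x)$ so that $C$ becomes a disjoint union of atoms of $\P_J$.

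With this preliminary claim in hand, both parts follow almost immediately. For part (1), choose $J$ large enough that both $A$ and $B$ are unions of atoms of $\P_J$; let $n_A$ and $n_B$ denote their respective numbers of constituent atoms, so that $\mu(A) = n_A \cdot [\Z^d : G_J]^{-1}$ and $\mu(B) = n_B \cdot [\Z^d : G_J]^{-1}$, forcing $n_A \leq n_B$. Taking $B'$ to be the union of any $n_A$ of the atoms making up $B$ gives a clopen $B' \subseteq B$ with $\mu(B') = \mu(A)$. For part (2), again fix $J$ large enough that $A$, $B$, and each $A_i$ are all unions of atoms of $\P_J$; writing $k_i$ for the number of atoms in $A_i$, the equality $\mu(A) = \mu(B)$ forces $B$ to consist of $\sum_{i=1}^n k_i$ atoms, so we can partition these into groups of sizes $k_1, \ldots, k_n$ and set $B_i$ to be the union of the atoms in the $i$-th group.

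The only conceivable obstacle lies in verifying the preliminary claim, but this is a routine compactness argument built directly from properties (2) and (3) of Theorem \ref{KRpartthm}; the rest is bookkeeping with equal-measure atoms. In fact, once the claim is established, the lemma is essentially the statement that the clopen value set of $(X,\T)$ behaves like the set of integer multiples of $[\Z^d : G_j]^{-1}$ at each finite stage.
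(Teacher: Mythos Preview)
Your proof is correct and follows essentially the same approach as the paper: both use the K-R partitions from Theorem~\ref{KRpartthm}, invoke compactness to find a single level $J$ at which $A$ and $B$ (and the $A_i$) are unions of equal-measure atoms, and then count atoms. Your treatment of part~(2) is slightly more explicit than the paper's, which simply notes that it ``clearly follows'' from part~(1).
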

\begin{proof} Let $\{\P_n\}$ denote the sequence of K-R partitions for $(X,\T)$ coming from Theorem \ref{KRpartthm}.  As this sequence generates the topology of $X$, we can write the two clopen sets $A$ and $B$ as unions of atoms from the $\P_n$'s.  But $A$ and $B$ are also closed subsets of the compact set $X$ and thus are themselves compact sets. This means the union of atoms covering $A$ and $B$ have finite subcovers, meaning we can choose $N \in \N$ such that $A$ and $B$ are both unions of atoms of $\P_N$.

Furthermore, since $\mu(A) \leq \mu(B)$, and since $\P_N$ consists of equal-measure atoms, the number of atoms of $\P_N$ whose union is $A$ (say $a$) must be less than or equal the number of atoms of $\P_N$ whose union is $B$.  Choose $a$ of the atoms of $\P_N$ comprising $B$, and let $B'$ be the union of those atoms.  This proves (1), and statement (2) clearly follows.
\end{proof}

%
%

\begin{lemma}
 \label{FirstLemma} 
 Fix a cone $\C \subseteq \Z^d$, and suppose that $(X,\T, \mu)$ is a $\Z^d$-odometer.  Given two disjoint, clopen subsets of $A,B \subseteq X$ of equal positive measure, then there is a function $\p : A \to \C$ such that $\T^{\p} : A \stackrel{\cong}{\rightarrow} B$.
 
Furthermore, given $x_A \in A$ and $x_B \in B$, $\p$ can be chosen so that $\T^\p(x_A) \neq x_B$.
\end{lemma}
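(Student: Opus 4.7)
The plan is to exploit the Kakutani-Rokhlin partitions $\{\P_n\}$ from Theorem \ref{KRpartthm}. Following the argument in the proof of Lemma \ref{GWodom}, I would first choose $N$ large enough that both $A$ and $B$ are unions of atoms of $\P_N$; since $\mu(A) = \mu(B)$ and all atoms of $\P_N$ have equal measure, the number $a$ of atoms comprising each is the same. Writing $A = \bigsqcup_{i=1}^{a} B(N, \v_i)$ and $B = \bigsqcup_{i=1}^{a} B(N, \w_i)$ with $\v_i, \w_i \in [\m_N]$, for any bijection $\sigma : \{1,\ldots,a\} \to \{1,\ldots,a\}$ the map $\T^{\w_{\sigma(i)} - \v_i}$ sends $B(N, \v_i)$ homeomorphically onto $B(N, \w_{\sigma(i)})$. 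So the locally constant function $\p_0(x) := \w_{\sigma(i)} - \v_i$ for $x \in B(N, \v_i)$ would already yield a homeomorphism $\T^{\p_0} : A \to B$ --- except that these jump vectors generally need not lie in the cone $\C$.

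To fix this I would shift each jump by a large cone-vector drawn from $G_N$. First, I would observe that $G_N \cap \C \neq \emptyset$: pick any $\c \in \C$; the multiples $\c, 2\c, 3\c, \ldots$ all lie in $\C$, and by the pigeonhole principle applied to the finite quotient $\Z^d / G_N$, one finds $k_1 < k_2$ with $k_1 \c \equiv k_2 \c \pmod{G_N}$, so $\c^* := (k_2 - k_1)\c$ lies in $G_N \cap \C$. Next, for any $\v \in \Z^d$, the rewriting $\v + k \c^* = k(\c^* + \v/k)$ combined with the openness of the filled cone shows that $\v + k\c^* \in \C$ for all sufficiently large $k$. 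Picking one $k$ that works for every $i$ simultaneously, I then set $\p(x) := (\w_{\sigma(i)} - \v_i) + k \c^*$ on $B(N, \v_i)$. Then $\p$ is locally constant with range in $\C$, and since $k\c^* \in G_N$ fixes each $\P_N$-atom setwise, $\T^\p$ still maps $B(N, \v_i)$ onto $B(N, \w_{\sigma(i)})$. Because $\sigma$ is a bijection and $A$ is compact, $\T^\p : A \to B$ is a continuous bijection into a Hausdorff space, hence a homeomorphism.

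For the ``furthermore'' clause, I would choose $\sigma$ so that the $\P_N$-atom containing $x_A$ is \emph{not} matched to the atom containing $x_B$; then $\T^\p(x_A)$ and $x_B$ lie in disjoint $\P_N$-atoms and so must differ. This is possible as soon as $a \geq 2$. If instead $a = 1$, I would replace $N$ by some $M > N$ with $G_M \subsetneq G_N$, which exists because $\{G_j\}$ must have strict drops infinitely often for $X$ to be a Cantor space (Theorem \ref{basicodomprops}(1)). Each of $A$ and $B$ then breaks into at least $[G_N : G_M] \geq 2$ atoms of $\P_M$, and I apply the construction above to $\P_M$ instead.

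The main technical point is the passage from the naive matching to one with all jumps lying in $\C$. Once the cone element $\c^*$ has been produced by the pigeonhole argument, the openness of the filled cone makes the shift into $\C$ essentially automatic, and the remaining verification is routine bookkeeping with the K-R structure.
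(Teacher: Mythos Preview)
Your proof is correct and follows essentially the same approach as the paper's: decompose $A$ and $B$ into equal numbers of $\P_N$-atoms and map atom to atom via powers $\T^{\p_j}$ with each $\p_j$ chosen from the coset $\w_j - \v_j + G_N$ intersected with $\C$. The paper simply asserts that each such intersection is nonempty and picks any element; your pigeonhole-then-shift construction of $\c^* \in G_N \cap \C$ is an explicit verification of exactly this fact.

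The only substantive difference lies in the ``furthermore'' clause. The paper keeps the identity matching between atoms and instead exploits the infinitude of $\C \cap (\w_1 - \v_1 + G_N)$ to pick $\p_1$ with $\T^{\p_1}(x_A) \neq x_B$, which avoids any need to refine when $a = 1$. Your device of perturbing the bijection $\sigma$ so that the atom of $x_A$ is not matched to the atom of $x_B$ is equally valid and arguably more transparent, at the modest cost of passing to a finer $\P_M$ in the single-atom case.
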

\begin{proof}
Let $\{\P_n\}$ denote the sequence of K-R partitions for $(X,\T)$ coming from Theorem \ref{KRpartthm}.  As in the proof of Lemma \ref{GWodom}, we can choose $N \in \N$ such that $A$ and $B$ are both unions of atoms of $\P_N$, and since $\mu(A) = \mu(B)$, the number of atoms of $\P_N$ whose union is $A$ must equal the number of atoms of $\P_N$ whose union is $B$.  We can thus write, using the notation of Theorem \ref{KRpartthm},
\[
A = \bigsqcup_{j=1}^m \pi_N^{-1}(\v_j + G_N); \hspace{.2in} B = \bigsqcup_{j=1}^m \pi_N^{-1}(\w_j + G_N), 
\]
where, without loss of generality, $x_A \in \pi_N^{-1}(\v_1 + G_N)$.  For $j = 1$, choose a vector $\p_1 \in  \C \cap (\w_1 - \v_1 + G_N)$ so that $\T^{\p_1}(x_A) \neq x_B$, and for each $j \in \{2, ..., m\}$, choose any vector $\p_j \in \C \cap (\w_j - \v_j + G_N)$.  Then define
$\p : A \to \Z^d$ by setting $\p(x) = \p_j$ whenever $x \in \pi_N^{-1}(\v_j + G_N)$; we have $\T^{\p}$ which maps $A$ homeomorphically to $B$ as wanted. 
\end{proof}

We now come to the aforementioned converse of Theorem \ref{mainodometertheorem}.  Essentially, the proof uses the homeomorphism described in (2) of Theorem \ref{mainodometertheorem} to mimic the K-R sequence of partitions for the $\Z$-odometer in the phase space of the other odometer, and applies Lemma \ref{FirstLemma} to define the speedup on these partition elements.  This argument follows the general framework of  \cite{Ash}, with modifications to allow for higher dimensions.

%
%

\begin{thm} 
\label{homeoimpliesspeedup}
Let $(X_1, \T_1, \mu_1)$ be a free $\Z^d$-odometer and let $(X_2, T_2, \mu_2)$ be a free $\Z$-odometer.  If $(X_1, \T_1)$ and $(X_2, T_2)$ are orbit equivalent, then for any cone $\C \subseteq \Z^d$, there is a $\C$-speedup $S$ of $\T_1$ that is topologically conjugate to $T_2$. 
\end{thm}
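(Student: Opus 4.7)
My approach adapts Ash's framework for $\Z$-to-$\Z$ speedups. By Theorem \ref{mainodometertheorem}, the hypothesis of orbit equivalence implies that $(X_1,\T_1)$ and $(X_2,T_2)$ share the same clopen value set. I will not use the measure-preserving homeomorphism of Theorem \ref{mainodometertheorem} directly to define $S$ (since it need not carry $\T_1$-orbits to $T_2$-orbits); instead, the coincidence of clopen values lets me reproduce the single-tower K-R structure of $T_2$ inside $X_1$ using Lemma \ref{GWodom}, and I then define $S$ as the ``successor'' in this reproduced tower, with every level-to-level jump realized by $\T_1^\p$ with $\p \in \C$ via Lemma \ref{FirstLemma}.

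Concretely, fix the canonical K-R sequence $\Q_n = \{B_n(0),\dots,B_n(h_n-1)\}$ for $T_2$ from Theorem \ref{KRpartthm}. I will inductively build a refining sequence of clopen partitions $\mathcal{P}_n = \{A_n(0),\dots,A_n(h_n-1)\}$ of $X_1$ with $\mu_1(A_n(i)) = 1/h_n$, together with a partial $S$ satisfying $S(A_n(i)) = A_n(i+1)$ for $0 \le i \le h_n - 2$ and given on each level as $\T_1^{\p_{n,i}}$ for some $\p_{n,i}: A_n(i) \to \C$ produced by Lemma \ref{FirstLemma}. When passing from $\mathcal{P}_n$ to $\mathcal{P}_{n+1}$ (with $h_{n+1} = k h_n$), I split the base $A_n(0)$ into $k$ equal-measure clopen pieces $A_{n+1}(0),A_{n+1}(h_n),\dots,A_{n+1}((k-1)h_n)$; the sub-levels of $A_n(i)$ for $0 < i < h_n - 1$ are then forced by iterating the already-defined $S$, the sub-levels of the top $A_n(h_n-1)$ are whatever remain, and the new jumps $S: A_{n+1}(\ell h_n - 1) \to A_{n+1}(\ell h_n)$ for $\ell = 1,\dots,k-1$ come again from Lemma \ref{FirstLemma} with $\p \in \C$. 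To force $\{\mathcal{P}_n\}$ to generate the topology of $X_1$, I further refine the split of $A_n(0)$ against the intrinsic $\T_1$-K-R partition from Theorem \ref{KRpartthm}, passing to a subsequence of $\{\Q_n\}$ if necessary so that $k$ is always large enough to accommodate this extra refinement.

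The map $S$ is then defined everywhere except at the single point $x_\infty = \bigcap_n A_n(h_n - 1)$, and I extend it continuously by $S(x_\infty) = z_\infty := \bigcap_n A_n(0)$ (continuity at $x_\infty$ follows from the fact that any $x$ close to $x_\infty$ eventually falls into some $A_{n+1}(\ell h_n - 1)$ and is sent by $S$ into $A_{n+1}(\ell h_n) \subseteq A_n(0)$). The conjugacy with $T_2$ is the unique homeomorphism $G: X_1 \to X_2$ with $G(A_n(i)) = B_n(i)$ for all $n$ and $i$; this is well-defined because both partition sequences generate the topology, and the identity $G \circ S = T_2 \circ G$ holds off $x_\infty$ by construction and extends to $x_\infty$ by continuity. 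To guarantee that $S$ is a genuine $\C$-speedup (and not merely a homeomorphism conjugate to $T_2$), I will arrange at the outset that $x_\infty$ and $z_\infty$ lie in the same $\T_1$-orbit with displacement in $\C$: fix $x_\infty \in X_1$ in advance, set $z_\infty = \T_1^{\v_0}(x_\infty)$ for some $\v_0 \in \C$, and guide the nested partitions so that their tops and bases shrink to these prescribed points, using the second clause of Lemma \ref{FirstLemma} along the way to avoid accidental matchings in the finite stages.

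I expect the main obstacle to be the combined bookkeeping at each refinement step: $\mathcal{P}_{n+1}$ must simultaneously inherit the tower measure structure of $\Q_{n+1}$, further refine the intrinsic $\T_1$-K-R partition enough to generate the topology of $X_1$, and extend the $S$ already defined at stage $n$ without contradiction (which is why the subsequencing of $\{\Q_n\}$ to enlarge $k$ is essential). Checking that the cocycle $\p$ is genuinely $\C$-valued after the continuous extension at $x_\infty$, so that the resulting $S$ is a $\C$-speedup rather than a merely Borel rearrangement, is the other delicate point, and is the reason for pre-choosing $z_\infty$ in the $\T_1$-orbit of $x_\infty$ with a displacement in $\C$ before the induction begins.
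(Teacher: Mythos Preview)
Your proposal is correct and shares the paper's overall architecture: reproduce the single-tower K-R structure of $(X_2,T_2)$ inside $X_1$, define $S$ level-to-level as $\T_1^{\p}$ with $\p\in\C$ via Lemma~\ref{FirstLemma}, arrange for the tops and bases to shrink to two pre-selected points related by a $\C$-jump, and close up $S$ at the exceptional point by that jump. The paper does precisely this, with its points $x_2$ and $x_0=\T_1^{\u}(x_2)$ playing the role of your $x_\infty$ and $z_\infty$.

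Where you genuinely diverge is in how the exceptional points are forced into position. The paper first \emph{transports} the $T_2$-partitions to $X_1$ via the measure-preserving homeomorphism $F^{-1}$, which immediately gives a generating sequence on $X_1$; but the resulting bases and tops land at $F^{-1}$ of two $T_2$-points, not at $x_0,x_2$. The paper therefore runs a ``swapping'' procedure at every stage, exchanging pieces of the base and top with small clopen neighborhoods of $x_0$ and $x_2$; each swap forces the partial speedup $S_{k-1}$ to be \emph{redefined} on a small set $R(k)$, and a separate argument shows every point lies in only finitely many $R(k)$. Your route sidesteps this entirely: since you build the stage-$(n{+}1)$ tower from a free split of the stage-$n$ base into $k$ pieces, you can place $z_\infty$ in the new base and $S^{-(h_n-1)}(x_\infty)$ in the piece destined to become the new top from the outset, so $S$ is never redefined once set. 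This is a real simplification. The price is that topology-generation on $X_1$ is no longer automatic; the paper gets it by refining each tower into pure $\P_1(k)$-columns (which is why the paper works with \emph{castles} and must mirror that refinement back on the $X_2$ side), whereas you must arrange each piece $C_\ell$ to sit inside a single $\P_1(m)$-atom and check that every $\p$ built so far is $\P_1(m)$-measurable. That works because equal clopen value sets guarantee the divisibility $[\Z^d:G_m]\mid h_{n+1}$ after subsequencing, but you should spell out that chain of divisibilities.

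One small slip: the sublevels of the top $A_n(h_n-1)$ are not ``whatever remain'' --- they are forced to be $S^{h_n-1}(C_\ell)$ by the already-defined $S$, exactly like the intermediate levels. This matters, since it is what lets you control which piece $C_{k-1}$ must contain $S^{-(h_n-1)}(x_\infty)$.
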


\begin{proof}

Choose a vector $\mathbf{u} \in \C$ and choose any $x_0 \in X_1$; let $x_2 = \T_1^{-\u}(x_0)$.  Let $\{A_{0,n}\}_{n=0}^\infty$ be a nested, decreasing sequence of clopen sets in $X_1$ with $\stackrel[n]{}{\cap} A_{0,n} = \{x_0\}$ and $diam(A_{0,n}) < \frac 1{2^n}$.  Similarly, let $\{A_{2,n}\}_{n=0}^\infty$ be a nested, decreasing sequence of clopen sets in $X_1$ with $\stackrel[n]{}{\cap} A_{2,n} = \{x_2\}$ and $diam(A_{2,n}) < \frac 1{2^n}$. Without loss of generality, assume $A_{0,0} \cap A_{2,0} = \emptyset$ and that $\mu_1(A_{0,n}) = \mu_1(A_{2,n})$ for every $n$.

By the equivalence of (2) and (3) in Theorem \ref{mainodometertheorem}, there is a homeomorphism $F : X_1 \to X_2$ such that $F^*(\mu_1) = \mu_2$.   \\

\textbf{Main idea of the proof:} \\
We want to mimic the structure of $(X_2,T_2)$ on $X_1$, so we begin by considering the sequence of K-R partitions for $(X_2, T_2)$ described in Theorem \ref{KRpartthm}. We denote this sequence of partitions by $\{\P_2(k)\}_{k=1}^{\infty}$; the sets comprising $\P_2(k)$ are labeled as $\{ B(k,v): v \in [h_2(k)] \}$.  Similarly, let $\{\P_1(k)\}_{k=1}^{\infty}$ be the sequence of K-R partitions for $(X_1,\T_1)$ coming from Theorem \ref{KRpartthm} which, we recall, generates the topology of $X_1$.

We construct the speedup $S$ by induction, with $S$ being defined on more and more of $X_1$ at each step.  At the end of the induction, we will have defined $S$ at all points in $X_1$ except for $x_2$ and we will have defined $S^{-1}$ at all points in $X_1$ except for $x_0$; we can then set $S(x_2) = x_0$ to complete the construction. \\

\textbf{Base case:} We begin the induction by considering the base case, which consists of six steps. \\

\emph{Step 1:  choose partition $\mathcal{P}_2(n_0)$ of $X_2$ and copy that partition over to $X_1$ to obtain $\widetilde{Q}_1(0)$.}  Fix $\ep_0$ such that $0<\ep_0< \mu_1(A_{0,0})$, and then choose $n_0>0$ such that the measure of each atom of $\P_2(n_0)$ is less than $\ep_0$.  We move the structure given by $\P_2(n_0)$ on $X_2$ over to $X_1$ by considering, for each $v \in [h_2(n_0)]\}$, the set $\Etilde_1(0,(1,v)) = F^{-1}(B_2(n_0, v))$.  This collection of sets $\{\Etilde_1(0,(1,v)) : v \in [h_2(n_0)]\}$ forms a partition of $X_1$ we call $\Qtilde_1(0)$. 

A remark regarding our notation:  the reason for the extra ``$1$'' in the sets comprising $\Qtilde_1(0)$ is that all these sets are coming from the first (and only) pretower comprising the precastle $\Qtilde_1(0)$.  Later, there will be multiple (pre)towers, but we want to be able to use the same notation; for instance, by $\Etilde_1(k,(\alpha,v))$ we mean the level at height $v$ in the $\alpha^{th}$ tower of the partition of $X_1$ used in the $k^{th}$ induction step. \\

\emph{Step 2:  ``Swap'' sets in $\Qtilde_1(0)$ to create the partition $\Qbar_1(0)$ of $X_1$, and define sets $F(0)$ and $R(0)$.} We next adjust $\Qtilde_1(0)$ so that the resulting partition $\Qbar_1(0)$ will have 
 $x_q \in (\Qbar_1(0))_{q} \subseteq A_{q,0}$ for each $q \in \{0,2\}$, where we recall that $(\Qbar_1(0))_0$ is the base and $(\Qbar_1(0))_2$ is the top of the pretower $\Qbar_1(0)$.  This adjustment is done by a procedure we will call ``swapping'', and we will repeat this procedure twice, once for $q = 0$ and again for $q = 2$.  
 At the $q=0$ step, we will delete from $\Etilde_1(0,(1,0))$ the points not in the set $A_{(0,0)}$ and then add in points from $A_{(0,0)}$, including $x_0$ if necessary.  More specifically, first define 
 \[
D_0(0) = \Etilde_1(0,(1,0)) -  \, A_{0,0}.
\]
We know that 
\[ 
 \mu_1\left(\Etilde_1(0,(1,0)) \right) < \ep_0 \leq \mu_1\left( A_{0,0} \right),
 \]
 so by Lemma \ref{GWodom}, we can choose a clopen set $C_0(0) \subseteq A_{0,0} -  \Etilde_1(0,(1,0))$ such that
\[\mu_1(C_0(0)) = \mu_1(D_0(0))\]
and such that $x_0 \in C_0(0)$ whenever $x_0 \notin  \Etilde_1(0,(1,0))$.

Basically, we want to swap out $D_0(0)$ for $C_0(0)$.  To do this rigorously, note 
that the set $C_0(0)$ may intersect a variety of the sets $\Etilde_1(0,(1,w))$ from $\Qtilde_1(0)$. 
Use this to partition $C_0(0)$ into clopen sets      
\[
C_0(0,(1,w)) = C_0(0) \cap \Etilde_1(0,(1,w)).
\]
We think of $(1,w)$ as the current location of this set in the first (and only) pretower.

Next, use Lemma \ref{GWodom} to partition $D_0(0)$ into clopen sets $\{D_0(0,(1,w))\}$ such that 
\[
\mu_1 \left(D_0(0,(1, w)) \right) = \mu_1 \left(C_0(0,(1,w)) \right).
\]
We think of the $(1,w)$ as the location this subset of $D_0(0)$ will be moved to.

We now wish to exchange $D_0(0,(1, w))$ and $C_0(0,(1,w))$ for all $w$.  This means, for each $\Etilde_1(0,(1,x))$, some points might be swapped out and others swapped in.  The specifics depend on whether or not $\Etilde_1(0,(1,x)) =  \Etilde_1(0,(1,0))$. If it is, set 
\[
\widehat{E}_1(0,(1,x)) = \left( \Etilde_1(0,(1,x)) \bigcup C_{0}(0) \right) - D_{0}(0) .
\]
Otherwise, set 
\[
\widehat{E}_1(0,(1,x)) = \left(\Etilde_1(0,(1,x))   -  C_{0}(0,(1,x))\right) \cup D_{0}(0,(1,x)) .
\]
After these changes have been made to create the partition $\{\Ehat_1(0,(1,v)) : v \in [h_2(n_0)]\}$, rename the sets in this partition back to $\{\Etilde_1(0,(1,v)) \}$ and repeat for the $q = 2$ step.  That is, let $u$ be such that $x_2\in\Etilde_1(0,(1,u))$ and swap out some 
$D_2(0) \subseteq \Etilde_1(0,(1,h_2(n_0)-1))$ for some $C_2(0) \subseteq A_{2,0} - \Etilde_1(0,(1,h_2(n_0)-1))$, ensuring $x_2 \in C_2(0)$ unless $u = h_2(n_0)-1$.   Note that when performing this swapping procedure for $q = 2$, we will have $\Ehat(0,(1,0)) = \Etilde(0,(1,0))$, as this set is already a subset of $A_{0,0}$, a set that is disjoint from $A_{2,0}$.
 
When this process has been completed, rename the resulting partition 
\[
\Qbar_1(0) = \left\{\Ebar_1(0,(1,v)) : v \in [h_2(n_0)]\right\}
\]
and note that this partition of $X_1$ satisfies 
\[
x_q \in \left(\Qbar_1(0)\right)_q \subseteq A_{q,0}
\]
for all $q \in \{0,2\}$.  \\

Define $F(0) = \emptyset$ (in the inductive steps, we will use $F(k)$ to record which points have been swapped,  but it is not important to keep track of whether or not points have been swapped in the base step).  Also set $R(0) = \emptyset$ (in the inductive steps, $R(k)$ will denote the set of points where the definition of $S_{k-1}$ was altered in the $k^{th}$ step, but in the base step, there is no $S_{-1}$ to alter, so $R(0)$ is trivially empty).  \color{black} This completes the ``swapping'' procedure.  \\

\emph{Step 3:  Construct a $\C$-speedup $S_0$ of $\T_1$ on the pretower $\Qbar_1(0)$.}  For each $v \in [h_2(n_0) - 1]$, use Lemma \ref{FirstLemma} to construct a map 
\[
S_0 : \Ebar_1(0,(1,v)) \homeo \Ebar_1(0,(1,v+1))
\]
such that $S_0(x) = \T^{\p_0(x)}(x)$ for some $\p_0 : \stackbin[v=0]{h_2(n_0)-2}{\bigsqcup} \Ebar_1(0,(1,v)) \to \C$.  We want to ensure $x_0$ and $x_2$ do not end up in the same $S_0$-orbit, so toward that end let $x^* = S_0^{h_2(n_0)-1}(x_0)$.  Then, use Lemma \ref{FirstLemma} to construct $S_0 : \Ebar_1(0,(1,h_2(n_0)-2)) \homeo \Ebar_1(0,(1,h_2(n_0)-1))$ such that $S_0(x) = \T^{\p_0(x)}(x)$ for $\p_0$ taking values in $\C$, but with the additional property that $S_0(x^*) \neq x_2$.  This converts the pretower  $\Qbar_1(0)$ into an $S_0$-tower. \\

\emph{Step 4:  Refine $\overline{\Q}_1(0)$.} We make two modifications to the partition $\overline{\Q}_1(0)$, based on constructions described in Section 2.5.4.  
First, separate the two points $x_0$ and $x_2$ into distinct towers, creating an $S_0$-castle with two different towers.  Second, to ensure that we end up with a sequence of partitions that generate the topology of $X_1$, we refine this $S_0$-castle into pure $\P_1(0)$-columns.  We denote the resulting partition, which is a refinement of $\Qbar_1(0)$ into $t(0)$ many $[h_2(n_0)]$-sized $S_0$-towers, by
\[
\Q_1(0) =  \left\{E_1(0,(\alpha,v)) : 1 \leq \alpha \leq t(0),\,  v \in [h_2(n_0)] \right\}.
\]
\\

\emph{Step 5:  Copy the refinement of Step 4 over to $X_2$.}  Next, we ``copy'' the partition  $\Q_1(0)$ over to $X_2$, producing a refinement of $\P_2(n_0)$ we call $\Q_2(0)$.  To accomplish this,  note that 
\[
\mu_2(B_2(n_0,0)) \, = \, \mu_1\left(\widetilde{E}_1(0,(1,0))\right) \, = \,  \mu_1\left(\Ebar_1(0,(1,0))\right).
\]
and 
\[
\Ebar_1(0,(1,0)) = \bigsqcup_{\alpha=1}^{t(0)} E_1(0,(\alpha,0)).
\]
Partition $B_2(n_0,0)$ into disjoint clopen subsets $\{E_2(0,(\alpha,0)) : 1 \leq \alpha \leq t(0)\}$ such that for each $\alpha$, $\mu_1(E_1(0,(\alpha,0))) = \mu_2(E_2(0,(\alpha,0)))$.   Then let $\Q_2(0)$ be the castle refinement of $\P_2(n_0)$ over this partition of $B_2(n_0,0)$.  We denote this refinement by
\[
\Q_2(0) = \left\{E_2(0,(\alpha,v)) : 1 \leq \alpha \leq t(0), \, \, v \in [h_2(n_0)] \right\}
\]
and observe that $\Q_2(0)$ is a $T_2$-castle, i.e. $T_2^{v}(E_2(0,(\alpha,0)) = E_2(0,(\alpha,v))$ for all $1\le\alpha\le t(0)$ and all $v  \in [h_2(n_0)]$. \color{black} \\

\emph{Step 6:  Define a partial set-wise conjugacy $\Phi_0$.}  Finally, define $\Phi_0 : \Q_1(0) \to \Q_2(0)$ by setting $\Phi_0(E_1(0,(\alpha,v))) = E_2(0,(\alpha,v))$.  This map satisfies 
\[
T_2 \circ \Phi_0 \left(\, E_1(0,(\alpha,v)) \, \right) \, = \, \Phi_0\circ S_0  \left( \, E_1(0,(\alpha,v)) \, \right)
\]
for all $\alpha \in \{1, ..., t(0)\}$ and all $v \in [h_2(n_0)-1]$.\\

This completes the base case of the proof.   Note that we have now defined integers $n_0$ and $t(0)$, along with partitions $\Q_1(0)$ of $X_1$ and $\Q_2(0)$ of $X_2$, subsets $F(0)$ and $R(0)$ of $X_1$, a ``partially-defined speedup'' $S_0$  of $\T_1$ and a set map $\Phi_0$ intertwining $S_0$ and $T_2$.  \\

\textbf{Inductive step:}
Let $k \geq 1$ and suppose we have constructed 
\begin{enumerate}
\item nonnegative integers $n_0 < n_1 < ... < n_{k-1}$;
\item positive integers $t(0), t(1), t(2), ..., t(k-1)$;
\item subsets $F(0), F(1), ..., F(k-1)$ of $X_1$ with $\mu_1(F(j)) < 4 \mu_1(A_{0,j})$ for $j \in \{0,1,..., ,k-1\}$;
\item subsets $R(0), R(1), ..., R(k-1)$ of $X_1$;
\item finite clopen partitions $\Q_1(0), \Q_1(1),  ..., \Q_1(k-1)$ of $X_1$ with
\[\Q_1(j) = \{E_1(j,(\alpha, v)) : 1 \leq \alpha \leq t(j),\, v \in [h_2(n_j)]\}\]
and finite clopen partitions $\Q_2(0), \Q_2(1),  ..., \Q_2(k-1)$ of $X_2$ with
\[\Q_2(j) = \{E_2(j,(\alpha,v)) : 1 \leq \alpha \leq t(j), v \in[h_2(n_j)]\}\]
which satisfy, for all $j \in \{0, ..., k-1\}$:
   \begin{enumerate}
   \item $\Q_1(j)$ refines $\P_1(j)$;
   \item $x_q \in \left(\Q_1(j)\right)_q \subseteq A_{q, j}$ for $q = 0, 2$;
   \item $\Q_2(j)$ refines $\P_2(n_j)$;
      \item $\Q_2(j)$ is a $T_2$-castle, i.e. $T_2 :  E_2(j,(\alpha, v)) \stackrel{\cong}{\to}   E_2(j,(\alpha, v + 1))$ for all $\alpha$ and all $v \in [h_2(n_j)-1]$;
   \end{enumerate}
   \item homeomorphisms $S_0, S_1, ..., S_{k-1}$, where 
   \[S_j : \Q_1(j) - \left(\Q_1(j)\right)_2 \to \Q_1(j) - \left(\Q_1(j)\right)_0\]
   so that, for all $j \in \{0,1,..., k-1\}$:
   \begin{enumerate}
   \item $\Q_1(j)$ is an $S_j$-castle;
   \item $S_j$ is a ``partially defined $\C$-speedup'' of $\T_1$, meaning there is a Borel function $\p_j : \Q_1(j) - \left(\Q_1(j)\right)_2 \to \C$ so that $S_j(x) = \T_1^{\p_j(x)}(x)$; 
   \item $x_0$ and $x_2$ are not in the same $S_j$-orbit; 
   \item for every $x$ in the domain of $S_{j-1}$, we have $\p_j = \p_{j-1}$ (i.e. $S_{j}(x) = S_{j-1}(x)$) unless $x \in R_j$; 
   \end{enumerate}
\item and bijections $\Phi_0, \Phi_1, ..., \Phi_{k-1}$ where each $\Phi_j : \Q_1(j) \to \Q_2(j)$ such that
\[
T_2 \circ \Phi_j(E_1(j,(\alpha,v))) = \Phi_j \circ S_j(E_1(j,(\alpha, v)))
\]
for all $\alpha \in \{1, ..., t(j)\}$ and all $v \in[h_2(n_j)-1]$.
\end{enumerate}

As with the base case, the inductive step itself subdivides into six steps.  \\

\emph{Step 1:  Choose a partition $\mathcal{P}_2(n_k)$ of $X_2$, refine it with respect to $\Q_2(k-1)$ and copy the refined partition over to $X_1$ to obtain $\widetilde{\Q}_1(k)$.}
Fix $\epsilon_k$ such that $0<\epsilon_k<\mu(A_{0,k})$ and 
\[
8 \epsilon_k \sum_{j=0}^{k-1} \mu_1(A_{0,j}) < \frac 13 \mu_1(A_{0,k}).
\]\color{black} Choose $n_k > n_{k-1}$ such that 
\[ 
\mu_2\left (\partial \mathcal{P}_2(n_k)\right) < \min \left\{\ep_k, \frac 13 \mu_1(A_{0,k})\right\},
\] 
recalling that $\partial \mathcal{P}_2(n_k)$ is the union of the base and top levels of the K-R partition $\P_2(n_k)$.


At the completion of the $(k-1)^{th}$ step, we have a partition $\Q_2(k-1)$ on $X_2$ where
\[
\Q_2(k-1) = \{E_2(k-1,(\alpha,v)) \, : \, 1\leq \alpha \leq t(k-1), \, v \in [h_2(n_{k-1})] \}.
\]
This partition has $t(k-1)$ towers, each of height $[h_2(n_{k-1})]$. 

The first part of step 1 is to refine $\mathcal{P}_2(n_k)$ into pure $\Q_2(k-1)$-columns; this divides $\mathcal{P}_2(n_k)$--which is one tower of height $h_2(n_k)$--into many (say $s(k)$) towers, denoted by
\[ 
{\widetilde{\mathcal{P}}}_2(n_k) = \left\{{\widetilde{B}}_2(k,(\beta,w)) \, : \, 1\leq \beta \le s(k), \, w \in [h_2(n_k)] \, \right \}.
\]

We can also think of this first part of step 1 as dividing the towers of $\Q_2(k-1)$ into sub-towers, which are then grouped together to form the $s(k)$ subdivisions of $\P_2(n_k)$ comprising $\Ptilde_2(n_k)$.  We notate this as follows: each set $E_2(k-1,(\alpha,v))$, for $1\leq \alpha \leq t(k-1)$ and  $v \in [h_2(n_{k-1})]$, is subdivided into 
\[
E_2(k-1,(\alpha, v)) = \bigsqcup_{j=1}^{m(\alpha)} E_2(k-1,(\alpha,v),j)
\]
where for each $(\alpha, v)$ and $j$ there exists $\beta = \beta((\alpha, v), j)$ and $w = w((\alpha, v), j)$ such that
\[
E_2(k-1,(\alpha,v),j) =  {\widetilde{B}}_2(k,(\beta,w)).
\]
Importantly, the map $((\alpha, v),j) \mapsto (\beta,w)$ is a bijection, since  $\Ptilde_2(n_k)$ consists of pure $\Q_2(k-1)$-columns. \\

The second part of step 1 is to partition and rearrange  $\Q_1(k-1)$ in an analogous way, creating a partition $\Qtilde_1(k)$ of $X_1$.  More specifically, for every $\alpha$, $1\le\alpha\le t(k-1)$,
use Lemma \ref{GWodom} to partition $E_1(k-1,(\alpha,0))$ into disjoint clopen subsets  $\{E_1(k-1,(\alpha,0),j),  1\leq j \leq m(\alpha)\}$ where
\[ 
\mu_1\left( \, E_1(k-1,(\alpha, 0),j) \, \right) \, = \, \mu_2\left( \, E_2(k-1,(\alpha, 0),j) \, \right).
\]
Then, let $ \{ \, E_1(k-1,(\alpha,v), j)  : 1\leq \alpha \leq t(k-1), 1 \leq j \leq m(\alpha), \, v \in [h_2(n_{k-1})] \, \}$ be the castle refinement of $\Q_1(k-1)$ over these partitions.  Finally, we define 
the partition $\Qtilde_1(k)$ of $X_1$ into $s(k)$ pretowers of height $h_2(n_k)$ as follows:  given $\beta = \beta((\alpha, v),j) \in \{1, ..., s(k)\}$ and $w = w((\alpha,v),j) \in [h_2(n_k)]$, we define $\Etilde_1(k, (\beta, w)) = E_1(k,(\alpha, v), j)$.    We may assume that the two special points $x_0$ and $x_2$ are in two different pretowers of the precastle $\Qtilde_1(k)$; if not, separate them as described in Section 2.5.4. \\

The third part of Step 1 is to make some adjustments to $\Qtilde_1(k)$, ensuring that $x_q \in (\Qtilde_1(k))_q$ for $q = 0, 2$.  To get started with this part, notice that for every level $\Etilde_1(k, (\beta, w))$ of $\Qtilde_1(k)$, there is $v  \in [h_2(n_{k-1})]$ and an $\alpha \in \{1, ..., t(k-1)\}$ such that 
\[
\Etilde_1(k, (\beta, w)) \subseteq E_1(k-1, (\alpha, v)).
\]
Now, consider the pretower of $\Qtilde_1(k)$ containing $x_0$; suppose this pretower is
\[
\{\Etilde_1(k, (\beta_0, w)) : w \in [h_2(n_k)]\}
\]
and that $x_0 \in \Etilde_1(k, (\beta_0, w_0))$.  Let $\gamma_0$ be so that
\[
x_0 \in \Etilde_1(k, (\beta_0, w_0)) \subseteq E_1(k-1, (\gamma_0, 0)).
\]
Now, we ``change the base'' of the tower of $\Qtilde_1(k)$ containing $x_0$ by setting, for each $w \in [h_2(n_k)]$,
 \[
 \widetilde{\Etilde}_1(k, (\beta_0, w)) =  \Etilde_1(k, (\beta_0, w + w_0 \mod h_2(n_k)))
 \]
where by ``$x \mod h$'' we mean the unique integer in $[h]$ which is congruent to $x$ modulo $h$.  Renaming the sets in this tower as $\Etilde_1(k, (\alpha, v))$, we now have that 
\[x_0 \in \left(\Qtilde_1(k)\right)_0.\]

Repeat the procedure described in the preceding paragraph a second time (if necessary) on a different tower in $\Qtilde_1(k)$, ``changing its base'' so that after the alteration, $x_2 \in (\Qtilde_1(k))_2$.   \\

\color{black}

\emph{Step 2:  ``Swap'' sets in $\widetilde{Q}_1(k)$ to create the partition $\overline{\Q}_1(k)$ and adjust $S_{k-1}$ as needed.}  
We next adjust $\widetilde{\Q}_1(k)$ so that the resulting partition $\overline{\Q}_1(k)$ will have sets such that $(\overline{Q}_1(k))_q \subset A_{q,k}$ for $q = 0, 2$.  This adjustment is done analogously to how it was done in the base case, using the ``swapping" procedure twice, once for $q =0$ and once for $q = 2$.  
  At the $q^{th}$ step, for each $\Etilde_1(k,(\beta,w)) \in (\widetilde{Q}_1(k))_{q} $, we first delete from it the points in $\Etilde_1(k,(\beta,w))$ that are not in the set $A_{q,k}$ and then add in points from $A_{q,k}$ not already used in another set $\Etilde_1(k,(\beta,w')) \in (\widetilde{Q}_1(k))_q $. 
  After this alteration, we will end up with a new partition $\overline{\Q}_1(k)$ which has sets of the same measure and configuration as $\widetilde{\Q}_1(k)$, but has an additional property (akin to Property 5(b) from the induction hypothesis) that $x_q \in \left(\overline{\Q}_1(k)\right)_q \subseteq A_{q, k}$.

The details are the same as the base case, but are repeated here to establish the notation needed to adjust $S_{k-1}$.  First, let $q = 0$ and define
\[D_q(k) = \left(\Qtilde_1(k)\right)_q -  \, A_{q,k}.\]
We know 
\[
\mu_1\left(\partial \Qtilde_1(k)\right) = \mu_2\left(\partial \P_2(n_k)\right) < \frac 13 \mu_1(A_{q,k}),
\]
\color{black} so by Lemma \ref{GWodom} we can choose a clopen set 
\[
C_q(k) \subseteq A_{q,k}  - \partial \Qtilde_1(k)\] 
such that
\[
\mu_1(C_q(k)) = \mu_1(D_q(k)).
\]

We next swap out $D_q(k)$ for $C_q(k)$, exactly the way this swapping was done in the base case (so the details are omitted).  After repeating this same swapping procedure for $q = 2$, define

\[
F(k) = \bigcup_{q \in \{0,2\}} \left( C_q(k) \bigcup D_q(k)\right);
\]
so that $F(k)$ is the set of all points which are swapped during this step.  Observe that
\[
\mu_1\left(F(k)\right) \leq 2 \sum_{q \in \{0,2\}} \mu_1\left(A_{q,k}\right) = 4\, \mu_1\left(A_{0,k}\right);
\] 
establishing statement (3) of the induction. \\

When this process has been completed for $q = 0, 2$, rename the resulting partition 
\[
\overline{Q}_1(k) = \{\Ebar_1(k,(\alpha,v)) : 1\leq \alpha \leq  s(k), \, v \in [h_2(n_k)]\}
\]
and note that this partition of $X_1$ satisfies $
x_q \in \left(\overline{\Q}_1(k)\right)_q \subseteq A_{q,k}$
for $q = 0, 2$.  \\

Unlike the base case, we need to adjust the definition of $S_{k-1}$ to account for the swapping that has taken place.

Let
\[
\check{Q}_1(k) = \{\check{E}_1(k,(\alpha,v)) : 1 \leq \alpha \leq r(k), v \in [h_2(n_k)]\}
\]
be the castle refinement of $\Qbar_1(k)$ into pure $\{F(k), F(k)^C\}$-columns.  Fix one pretower in this partition, say 
\[
 \{\check{E}_1(k,(\alpha,v)) :  v \in [h_2(n_k)]\}.
\]
This pretower divides into blocks of length $h_2(n_{k-1})$, each based at some $\check{E}_1(k, (1, w))$ with $w$ a multiple of $h_2(n_{k-1})$.  For those blocks with no levels contained in $F(k)$, the block forms a $S_{k-1}$-tower and we do not alter the definition of
 $S_{k-1}$.  

However, suppose that a block has one or more levels which are subsets of $F(k)$.  For instance, suppose there is exactly one such level, at height $v$. We can think of this block as looking something like the figure below (where the block is arranged sideways to save space).
 Notice that since $\check{E}_1(k, (1,v))$ was swapped from its previous location, the $S_{k-1}$ defined in the prior induction step no longer maps to and/or from this set as wanted:

\[
\xymatrix{
{}\save[]+<0cm,0cm>*\txt{%
$\check{E}_1(k,(1,w))$\; }  \ar@{.>}@/^.5pc/[d]
 \restore 
&
&
{}\save[]+<3cm,0cm>*\txt{%
$\check{E}_1(k,(1, w + h_2(n_{k-1}) - 1))$\; } \ar@{.>}@/^1.5pc/[rrrrd]
 \restore 
&
\\
\bullet \ar^{S_{k-1}}[r] 
& \bullet \ar^{S_{k-1}}[r] 
& \bullet 
&  \bullet
& \bullet \ar^{S_{k-1}}[r] 
& \bullet \ar^{S_{k-1}}[r] 
& \bullet \\
&
&
& {}\save[]+<0cm,0cm>*\txt{%
 $\check{E}_1(k,(1,v))$\; }  \ar@{.>}@/^.75pc/[u]
 \restore 
 &
}
\]

Use Lemma \ref{FirstLemma} to (re)define  $S_{k-1} : \check{E}_1(k,(1,v-1)) \homeo \check{E}_1(k,(1,v))$ and/or $S_{k-1} : \check{E}_1(k,(1,v)) \homeo \check{E}_1(k,(1,v+1))$, so that $S_{k-1} = \T_1^{\p(x)}$ for $\p$ taking values in $\C$ (the places where $S_{k-1}$ may have been redefined are indicated by the dashed arrows below).  We remark that if $\check{E}_1(k, (1,v))$ is the base or top of this block, $S_{k-1}$ only needs to be redefined on one set.

\[
\xymatrix{
\bullet \ar^{S_{k-1}}[r] 
& \bullet \ar^{S_{k-1}}[r]
& \bullet\ar@{-->}^{S_{k-1}}[r]
& \bullet \ar@{-->}^{S_{k-1}}[r] 
& \bullet \ar^{S_{k-1}}[r] 
& \bullet \ar^{S_{k-1}}[r]
& \bullet  \\
& {}\save[]+<3cm,0cm>*\txt{%
$\check{E}_1(k,(1,v))$\; }  \ar@{.>}@/^.75pc/[urr]
 \restore 
& 
}
\]

Repeat the procedure outlined above for each level of each block comprising the tower which is a subset of $F(k)$.

Let $R(k)$ be the set of all points in $X_1$ such that $S_{k-1}$ has been redefined via this procedure (this set is the union of the $\check{E}_1(k,(1,v))$s and the $\check{E}_1(k,(1,v-1))$s over the $v$ such that $\check{E}_1(k,(1,v)) \subseteq F(k)$).  This completes the ``swapping'' procedure and the readjustment of $S_{k-1}$.  \\ 

\emph{Step 3:  Construct the partial speedup $S_k$ on the pretower $\overline{\Q}_1(k)$.} 
Note that $\Qbar_1(k)$ consists of $s(k)$ pretowers, each of which is made up of blocks of length $h_2(n_{k-1})$ on which $S_{k-1}$ is defined.
 In other words, each pretower consists of unions of smaller $S_{k-1}$-towers, as indicated in the diagram below (again, the tower is presented horizontally).  

\[
\xymatrix{
 \bullet \ar^{S_{k-1}}[r] 
&  \bullet \ar^{S_{k-1}}[r] 
& \bullet \ar^{S_{k-1}}[r]
& \bullet 
& \bullet \ar^{S_{k-1}}[r] 
& \bullet \ar^{S_{k-1}}[r] 
& \bullet \ar^{S_{k-1}}[r]
& \bullet 
}
\]

Notice the heights of the bases of the smaller towers are integers in $[h_2(n_k)]$ that are multiples of $h_2(n_{k-1})$.  

For each $v \in [h_2(n_k)-1]$, define $S_k : \Ebar_1(k, (\alpha, v)) \homeo \Ebar_1(k, (\alpha, v+1))$ to coincide with $S_{k-1}$ if 
$v+1$ is not a multiple of $h_2(n_{k-1})$.  If $v+1$ is a multiple of $h_2(n_{k-1})$, use
Lemma \ref{FirstLemma} so that $S_k = \T_1^{\p(x)}$ for $\p$ taking values in $\C$.  This yields the partial speedup $S_k$ and makes $\overline{Q}_1(k)$ into a $S_k$-castle, establishing (6) of the induction.  \\

\emph{Step 4:  Refine $\overline{\Q}_1(k)$.}  Denote by
\[
Q_1(k) = \left\{
 E_1(k,(\alpha,v)): 1 \leq \alpha \leq t(k),  v \in [h_2(n_k)] 
 \right\};
\]
the refinement of $\overline{\Q}_1(k)$ into pure $\P_1(k)$-columns.  This partition has $t(k)$-many $S_k$-towers, each of height $[h_2(n_k)]$.  (5a) of the induction is immediate, and (5b) follows from our work in Step 2 above.  \\

\emph{Step 5:  Copy the refinement of Step 4 over to $X_2$.} Recall that we had, for each $\alpha$, 
\[
\mu_2\left({\widetilde{B}}_2(n_k,(\alpha,0))\right) \, = \, \mu_1\left(\Etilde_1(k,(\alpha,0))\right) \, =  \,  \mu_1\left(\Ebar_1(k,(\alpha,0))\right).
\]
Define, for each $\alpha$, the sets $\{ \Ebar_1(k,(\alpha,0),j): 1\le j\le q(\alpha) \}$; these are the atoms of $\Q_1(k)$ contained in the atom $\Ebar_1(k,(\alpha,0))$ which belongs to the base of $\Qbar_1(k)$.  Thus
$\Ebar_1(k, (\alpha,0)) =  \stackrel[j=1]{q(\alpha)}{\bigsqcup} \Ebar_1(k,(\alpha,0),j)$ gives different notation for the refinement constructed in Step 4.  Then choose
disjoint clopen subsets $\{ E_2(k,(\alpha,0),j) \}$, whose union is all of  ${\widetilde{B}}_2(n_k,(\alpha,0))$,  with $\mu_2(E_2(k,(\alpha,0),j) = \mu_1(\Ebar_1(k,(\alpha,0),j)$.  Denote by
\[ 
Q_2(k) = \left\{ E_2(k,(\alpha,v)): \, 1\leq \alpha \leq t(k), \, v \in [h_2(n_k)] \right\}
\] 
the castle refinement of  ${\widetilde{\mathcal{P}}}_2(n_k)$ over these partitions; we now have (5c) and (5d) of the induction.  \\

\emph{Step 6:  Define the partial set-wise conjugacy $\Phi_l$.} Finally, define $\Phi_k : \Q_1(k) \to \Q_2(k)$ by setting $\Phi_k\left(E_1(k,(\alpha,v))\right) = E_2(k,(\alpha, v))$.  
This map satisfies 
\[
T_2 \circ \Phi_k \left( \, E_1(k,(\alpha, v)) \, \right) \, = \, \Phi_k\circ S_k  \left( \, E_1(k,(\alpha, v) ) \, \right)
\]
for all $\alpha\in \{1, ..., t(k)\}$ and all $v \in [h_2(n_k) - 1]$.  This establishes (7) of the induction, and completes the induction step.  \\
\\

\textbf{Conclusion:}   After completing the induction procedure, we have constructed:
\begin{enumerate}
\item subsets $R(0), R(1), R(2), ...$ of $X_1$, where each $R(k)$ is the set of points where $S_k$ does not equal $S_{k-1}$;

\item finite clopen partitions $\Q_1(0), \Q_1(1), \Q_1(2), ...$ which refine and generate the topology of $X_1$ with
     \[
     \Q_1(k) = \{E_1(k,(\alpha, v)) : 1 \leq \alpha \leq t(k),\, v \in [h_2(n_k)]\}
     \]
     and finite clopen partitions $\Q_2(0), \Q_2(1), \Q_2(2), ...$ which refine and generate the topology of $X_2$ with
     \[
     \Q_2(k) = \{E_2(k,(\alpha, v)) : 1 \leq \alpha \leq t(k),\, v \in[h_2(n_k)]\}
     \]
          which satisfy, for all $k$:
    \begin{enumerate}
           \item $x_q \in \left(\Q_1(k)\right)_q \subseteq A_{q, k}$ for $q = 0, 2$;
          \item $\Q_2(k)$ is a $T_2$-castle;
          \item $\Q_1(k)$ is an $S_k$-castle, where $S_k(x) = \T_1^{\p(x)}(x)$ for $\p$ taking values in $\C$ and $x_0$ and $x_2$ are never in the same $S_k$-orbit; and
     \end{enumerate}
\item bijections $\Phi_0, \Phi_1, \Phi_2, ...$ where each $\Phi_k: \Q_1(k) \to \Q_2(k)$ is such that
    \[
    T_2 \circ \Phi_k\left(E_1(k,(\alpha, v))\right) = \Phi_k \circ S_k\left(E_1(k,(\alpha, v))\right)
    \]
    for all $\alpha \in \{1, ..., t(k)\}$ and all $v \in [h_2(n_k)-1]$.
\end{enumerate}

We next claim that no $x$ belongs to infinitely many $R(k)$.  
We first show this is true for the point $x_0$. Based on the third part of Step 1 of the induction procedure, 
we know $x_0$ always belongs to $(\Qtilde_1(k))_0$.
Since  $x_0\in A_{0,k}$ for every $k$, it is never swapped in Step 2 of the induction procedure.
It is possible that for some $k$, $S_{k-1}(x_0)$ is part of a set swapped in Step 2 of the induction procedure, necessitating that 
$S_k(x_0) \neq S_{k-1}(x_0)$, i.e. $x_0\in R(k)$.  If this is the case, then the set that contained $S_{k-1}(x_0)$ 
would have been swapped with a set disjoint from $A_{0,k}$  meaning $S_k(x_0)\notin A_{0,k}$ and thus $S_k(x_0)\notin A_{0,k'}$ for every $k'\ge k$.  This would mean $S_k(x_0) \notin F(k')$ for every $k'>k$ and so $x_0\notin R(k')$ for every $k'>k$.

We next note that $x_2$ belongs to no $R(k)$.  This is because $S_k(x_2)$ is not even defined for any $k$, since $x_2$ is in the top of every 
$\Qtilde_1(k)$ after the third part of Step 1 of the induction.

Last, we consider $x\notin \{x_0,x_2\}$.
Then for some $k$, $x \notin A_{0,k} \bigcup A_{2,k}$.  At the $k^{th}$ induction stage and beyond, if $x \notin \partial \Qtilde_1(k)$, $x$ would never be swapped into $\partial \Qtilde_1(k)$ (since it isn't in $A_{0,k}$ or $A_{2,k}$).  
If $x \in \partial \Qtilde_1(k)$ at the $k^{th}$ induction stage, $x$ would be swapped out of its location at that time (since it isn't in $A_{0,k}$ or $A_{2,k}$), but then for $l > k$, $x \notin A_{0,l} \cup A_{0,l}$ so $x$ would never be swapped back into $\partial \Qtilde_1(l)$.  
Thus $x \notin F(l)$ for every $l > k$.  Therefore, the only way $x \in R(l)$ is if $S_{l-1}(x) \in F(l)$ for some $l > k$.  
But this can only happen once, analagous to the situation of $x_0$.
Therefore there is an $L$ so that $x \notin R(l)$ for $l > L$.  

Since no $x$ belongs to infinitely many $R(k)$, and because for $x \notin R(k)$, $S_{k-1}(x) = S_k(x)$, and since the boundaries of $\Q_1(l)$ shrink to the two exceptional points $\{x_0, x_2\}$ mentioned earlier, we see that for every $x$ other than  $x_2$, we can define $S : X - \{x_2\} \to X - \{x_0\}$ by
\[
S(x) = \bigcap_{j=0}^\infty \bigcup_{k=j}^\infty S_k(x).
\]
Extend the definition of $S$ to the exceptional point $x_2$ by defining $S(x_2) = x_0$. By the way these points were originally chosen, we know that this specially defined $S$ is of the form $\T_1^\p$ for $\p: X_1 \to \C$.  Thus we obtain a $\C$-speedup $S$ of $(X_1, \T_1)$ defined on the entirety of $X_1$.

Now, for any point $x \in X$, $x$ is only swapped at finitely many induction steps.  Suppose $x$ is not swapped after the $k^{th}$ induction step; since $\Q_1(l)$ refine and generate, every $x \in X_1$ is determined by the sequence of atoms of $\Q_1(l)$ (starting with $l = k+1$) to which it belongs.   Call those atoms $\Q_1(l)(x)$.  Then, define 
\[
\Phi(x) = \bigcap_{l=k+1}^\infty \Phi_l(\Q_1(l)(x));
\] since the atoms of $\Q_2(l)$ also refine and generate, $\Phi$ is a homeomorphism from $X_1$ to $X_2$.  By property (3) above, $\Phi \circ S = T_2 \circ \Phi$, meaning that we have a $\C$-speedup $S$ of $(X_1, \T_1)$ conjugate to $(X_2, T_2)$ as wanted.

\end{proof}

There are two directions in which one might hope to generalize the result of Theorem \ref{homeoimpliesspeedup}. We end this section by discussing the challenges involved.

First, ideally one would be able to conclude an analogous result for arbitrary minimal $\Z^d$-Cantor systems ($X_1, \T_1)$ (as opposed to just $\Z^d$-odometers).  A major problem here is that one needs an analogue of Lemma 3.1, which says that given two subsets of equal measure, there is a function $\p$ taking values in $\C$ such that  $\T_1^\p$ homeomorphically maps one subset to the other.  When $d=1$, such a result exists (see Lemma 3.16 of \cite{Ash}).  But the proof uses induced transformations (first return maps to subsets of the phase space), which are not well-defined for minimal actions of $\Z^d$ when $d \geq 2$.

Second, we conjecture that our Theorem \ref{homeoimpliesspeedup} is valid even if $\T_2$ is a higher-dimensional odometer action.  The problem here is that while K-R partitions exist for $\Z^d$-odometers (and indeed, for all minimal $\Z^d$-Cantor systems \cite{For}),  the boundaries of the K-R partitions decrease to an uncountable collection of points (as opposed to our situation in Theorem \ref{homeoimpliesspeedup}, where we have just the two exceptional points $x_0$ and $x_2$).  Our argument does not allow for this, as we have no method of defining one or more generators of the speedup $\S$ on this uncountable boundary.

%
%

\section{Bounded speedups of odometers}

We now turn our attention to speedups where the speedup cocycle is bounded.

\begin{defn} Suppose $\S : \Z^{d_2} \actson X$ is a speedup of $\T : \Z^{d_1} \actson X$ with speedup cocycle $\p$.  We say the speedup is \textbf{\emph{bounded}} if for each $\v \in \Z^{d_2}$, the set $\{\p(x, \v) : x \in X\}$ is a bounded subset of $\Z^{d_1}$.
 \end{defn}
 
First, we note the connection between boundedness of a speedup and continuity of the speedup cocycle:

%
%

\begin{lemma} 
Let $\T : \Z^{d_1}\actson X$ be a $\Z^d$-Cantor system and suppose $\S : \Z^{d_2} \actson X$ is a speedup of $\T$.  The following are equivalent:
\begin{enumerate}
\item The speedup is bounded.
\item For each $j \in \{1, ..., d_2\}$,the sets $\{\p(x, \e_j) : x \in X\}$ are bounded. 
\item For each $j \in \{1, ..., d_2\}$, the function $x \mapsto \p(x, \e_j)$ is continuous.
\item For each $\v \in \Z^{d_2}$, the function $x \mapsto \p(x, \v)$ is continuous.
\item The speedup cocycle $\p : X \times \Z^{d_2} \to \Z^{d_1}$ is continuous.
\end{enumerate}
\end{lemma}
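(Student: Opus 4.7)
The plan is to establish the equivalences by circulating through them, with the cocycle identity $\p(x, \v + \w) = \p(x, \v) + \p(\S^\v(x), \w)$ as the central tool; compactness of $X$ together with the discrete topologies on $\Z^{d_1}$ and $\Z^{d_2}$ handle the purely topological conversions.

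The implication $(1) \Rightarrow (2)$ is immediate, and $(2) \Rightarrow (1)$ reduces to a cocycle computation: iterating the identity above, any $\v = \sum_j n_j \e_j$ expresses $\p(x, \v)$ as a sum of at most $\sum_j |n_j|$ terms of the form $\pm \p(\S^\w(x), \e_j)$. Negative generators are handled by first deriving $\p(y, -\e_j) = -\p(\S^{-\e_j}(y), \e_j)$, which comes from setting $\w = -\v$ in the cocycle identity applied to $\v = \e_j$. Since each term is bounded by (2) and the number of terms depends only on $\v$, the sum is bounded. The same expansion also gives $(3) \Rightarrow (4)$, since a finite sum of continuous functions (here $\p(\cdot, \e_j)$ pre-composed with the continuous maps $\S^\w$) is continuous.

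For $(2) \Leftrightarrow (3)$: the direction $(3) \Rightarrow (2)$ is forced by compactness, since a continuous function from $X$ into the discrete space $\Z^{d_1}$ has finite image. For $(2) \Rightarrow (3)$, boundedness means $\p(\cdot, \e_j)$ takes only finitely many values $\v_1, \dots, \v_m$, and each level set $A_i = \p(\cdot, \e_j)^{-1}(\v_i)$ is closed by essentially the argument of Theorem \ref{thm1.3}: if $x_n \in A_i$ converges to $x$, then $\S^{\e_j}(x_n) = \T^{\v_i}(x_n)$ passes to the limit, and freeness forces $\p(x, \e_j) = \v_i$. Finitely many closed sets partitioning $X$ forces each to be clopen. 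Finally, $(4) \Rightarrow (3)$ is trivial, and $(4) \Leftrightarrow (5)$ is a topological triviality: since $\Z^{d_2}$ has the discrete topology, a function on $X \times \Z^{d_2}$ is continuous iff each slice $\p(\cdot, \v)$ is.

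The only nonroutine step is the closedness of level sets in $(2) \Rightarrow (3)$, which leans on freeness of $(X, \T)$ via the argument of Theorem \ref{thm1.3}; in the paper's main applications (minimal $\Z^d$-odometers and their speedups) freeness is available or assumed, so this poses no genuine obstacle. The remaining work is formal manipulation of the cocycle identity together with elementary point-set topology.
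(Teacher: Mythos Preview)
Your proposal is correct and follows essentially the same route as the paper: the paper also treats $(1)\Leftrightarrow(2)$ and $(3)\Leftrightarrow(4)\Leftrightarrow(5)$ as obvious or routine cocycle manipulations, and handles the key step $(2)\Rightarrow(3)$ exactly as you do, by partitioning $X$ into finitely many level sets and invoking the closedness argument from Theorem~\ref{thm1.3}. Your explicit flagging of the reliance on freeness in that step is, if anything, more careful than the paper, which silently appeals to the proof of Theorem~\ref{thm1.3} without restating the hypothesis.
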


\begin{proof} 
It is obvious that (1) implies (2); the fact that (2) implies (1) follows from the cocycle equation; and clearly conditions (3), (4) and (5) are equivalent.  We have (3) implies (2),  because under any continuous function, $X$ is mapped to a compact, hence bounded, subset of the codomain.  

Finally, to show that (2) implies (3), fix $j$ and note that if $\{\p(x, \e_j) : x \in X\}$ is bounded, then there is a finite set $\{\p_1, ..., \p_n\} \subseteq \Z^d$ so that $X = \stackrel[i=1]{n}{\bigsqcup} \{x \in X : \p(x, \e_j) = \p_i\}$.  
By the proof of Theorem \ref{thm1.3}, each set in this union is closed; since the union is finite, each set is also open, making $x \mapsto \p(x,\e_j)$ continuous as wanted.
\end{proof}

In \cite{AAO}, the authors prove that a $\Z$-action that is a minimal bounded speedup of a $\Z$-odometer is an odometer which is conjugate to the original odometer.  We prove below in Theorem \ref{bddspeedupthm}  that a minimal bounded speedup of a $\Z^d$-odometer is an odometer.  
In light of Theorem \ref{mainodometertheorem}, this speedup must be orbit equivalent to the original odometer. However, we prove 
in Theorem \ref{notconjugate} that a bounded speedup of a $\Z^d$-odometer is not necessarily isomorphic, and thus not necessarily conjugate, to the original system, even if it is also an action of $\Z^d$.

%
%

\begin{thm} \label{bddspeedupthm} Let $(X_\gothG, \sigma_\gothG)$ be a free $\Z^{d_1}$-odometer and suppose $\S : \Z^{d_2} \actson X$ is a minimal speedup of $\sigma_\gothG$, where the speedup cocycle $\mathbf{p}$ is bounded.   Then $(X_\gothG,\S)$ is a free $\Z^{d_2}$-odometer.
\end{thm}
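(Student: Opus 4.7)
The plan is to use the K--R partition sequence $\{\P_j\}$ for $(X_\gothG, \sigma_\gothG)$ from Theorem~\ref{KRpartthm} to produce a decreasing sequence $\mathfrak{H} = \{H_j\}$ of finite-index subgroups of $\Z^{d_2}$ for which $(X_\gothG, \S)$ is conjugate to the odometer $(X_\mathfrak{H}, \sigma_\mathfrak{H})$.  First I would invoke the preceding lemma to conclude that boundedness makes $\p$ continuous, so each $x \mapsto \p(x, \e_i)$ takes only finitely many values.  Since the atoms of $\P_j$ have diameters tending to $0$ and the $\P_j$ generate the topology of $X_\gothG$, there is some $J$ such that for every $j \geq J$ and every $i \in \{1, \ldots, d_2\}$, the function $\p(\cdot, \e_i)$ is constant on every atom of $\P_j$.

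Next I would observe that, since $\sigma_\gothG^{\w}$ permutes the atoms of $\P_j$ according to the shift $\mathbf{u} + G_j \mapsto \mathbf{u}+\w + G_j$ on $\Z^{d_1}/G_j$, the constancy above means that $\S^{\e_i}$ sends each atom onto a single atom of $\P_j$.  By the cocycle relation the same holds for every $\S^\v$, yielding a homomorphism $\phi_j \colon \Z^{d_2} \to \mathrm{Sym}(\Z^{d_1}/G_j)$.  Let $H_j = \ker \phi_j$, a finite-index subgroup of $\Z^{d_2}$.  Minimality of $\S$ forces the quotient action $\Z^{d_2}/H_j \actson \{\text{atoms of }\P_j\}$ to be transitive; since $\Z^{d_2}/H_j$ is abelian, every point stabilizer coincides with the global kernel of the action, so the action is in fact simply transitive and $[\Z^{d_2} : H_j] = [\Z^{d_1} : G_j]$.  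Because $\P_{j+1}$ refines $\P_j$, any element of $H_{j+1}$ also fixes every atom of $\P_j$, giving $H_{j+1} \subseteq H_j$, so $\mathfrak{H} = \{H_j\}_{j \geq J}$ is a decreasing sequence of finite-index subgroups of $\Z^{d_2}$.

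For the conjugacy, I would fix a nested sequence of base atoms $B_j^\ast \in \P_j$ and label each atom of $\P_j$ by the unique coset in $\Z^{d_2}/H_j$ that carries $B_j^\ast$ to it via $\S$.  Sending each $x \in X_\gothG$ to the coherent sequence of labels of the atoms of each $\P_j$ containing it yields a map $\Phi \colon X_\gothG \to X_\mathfrak{H}$ which, by the diameter-shrinking and refinement properties in Theorem~\ref{KRpartthm}, is a homeomorphism, and by construction satisfies $\Phi \circ \S^\v = \sigma_\mathfrak{H}^\v \circ \Phi$ for every $\v \in \Z^{d_2}$.  Freeness then reduces via Theorem~\ref{basicodomprops}(2) to $\bigcap_j H_j = \{\0\}$: any $\v$ in this intersection acts trivially on every atom of every $\P_j$, whence $\S^\v = \mathrm{id}_{X_\gothG}$; freeness of $\sigma_\gothG$ then forces $\p(\cdot,\v) \equiv \0$, from which the faithfulness of $\S$ as a $\Z^{d_2}$-action yields $\v = \0$.

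The main obstacle is the compatibility bookkeeping for $\Phi$: the labelings at successive levels must be arranged so that the natural quotient $\Z^{d_2}/H_{j+1} \twoheadrightarrow \Z^{d_2}/H_j$ agrees with restricting an atom of $\P_{j+1}$ to the atom of $\P_j$ containing it.  Once the base atoms are chosen coherently, this reduces to a routine check using simple transitivity at each level; the subtler piece is justifying freeness of $\S$ (equivalently, ruling out nontrivial $\v$ with $\p(\cdot,\v)\equiv \0$), which is the only place the hypotheses on $\sigma_\gothG$ and the bounded structure of $\p$ are used beyond merely exhibiting the odometer structure.
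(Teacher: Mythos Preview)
Your approach is essentially the paper's: the paper defines $H_j$ as the stabilizer $\{\h \in \Z^{d_2} : \S^{\h}(E_j) = E_j\}$ of the atom $E_j$ containing $0$ rather than as the kernel of the full permutation representation, but your simple-transitivity observation (abelian plus transitive) shows these coincide, and your conjugacy $\Phi$ via coherent coset-labelings of atoms is precisely the paper's map $\phi$ assembled from $\phi_j : \P_j \to \Z^{d_2}/H_j$. Your caution about the freeness step is on point: the paper's Claim 2 passes from $\S^{\w}(0)=0$ with $\w\neq\0$ directly to ``$\sigma_\gothG^{\v}(0)=0$ for some nonzero $\v$,'' which is exactly the faithfulness issue you flag and which the paper does not justify any further than you do.
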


\begin{proof}  We first find a decreasing sequence of subgroups $\mathfrak{H} = \{H_{1}, H_2, H_3, ...\}$ with appropriate properties, so that by Definition \ref{CortezDef} and Theorem \ref{basicodomprops}, $(X_{\mathfrak{H}}, \sigma_\mathfrak{H})$ is a free, minimal $\Z^{d_2}$-odometer.  The proof is then concluded by establishing that $(X_{\mathfrak{H}}, \sigma_\mathfrak{H})$ is conjugate to $(X_\gothG,\S)$. 

To find the subgroups making up $\mathfrak{H}$, we start by considering
$\p : X_\gothG \times \Z^{d_2} \to \Z^{d_1}$, the speedup cocycle generating $\S$ from $\sigma_\gothG$.  As $X_\gothG$ is compact and for each $\v \in \Z^{d_2}$ the map $x \mapsto \p(x,\v)$ is continuous, each such map must also be uniformly continuous.

Now consider the sequence $\{\mathcal{P}_j\}$ of partitions of $X_\gothG$ as described in Theorem \ref{KRpartthm}.  As the maximum diameter of any atom of $\mathcal{P}_j$ tends to zero as $j$ increases, there is $J$ such that for all $j \geq J$, whenever $x$ and $y$ lie in the same atom of $\mathcal{P}_j$, $\p(x,\v) = \p(y,\v)$ for all $\v \in \Z^{d_2}$.  So without any loss of generality, we can assume $J = 1$ by renaming our sequence $\gothG$ from $\{G_1, G_2, ...\}$ to $\{G_J, G_{J+1}, G_{J+2}, ...\}$.  This produces a conjugate version of the original odometer $\sigma_\gothG$ by Lemma 1 of \cite{C}. 

Recalling from the proof of Theorem \ref{KRpartthm} that $\mathcal{P}_j = \{B(j,\v) : \v \in [\mathbf{m}_j]\}$ and noting that 
$\sigma_\gothG^\w (B(j,\v)) = B(j, \x)$ where $\x \in [\mathbf{m}_j]$ is congruent to $\v+\w  \mod G_j$, we see that for each $\w \in \Z^{d_1}$,
$\sigma_\gothG^\w$ takes atoms of $\P_j$ to atoms of $\P_j$.  Since  $\p$ is constant on elements of $\mathcal{P}_j$, 
it follows that $\sigma_\gothG^{ \p(x,\v)} = \S^\v(x)$ also takes atoms of $\P_j$ to atoms of $\P_j$ for each $\v \in \Z^{d_2}$.  
In other words, for each $\v \in \Z^{d_2}$ and any atom $A$ of $\mathcal{P}_j$, 
$\S^\v(A)$ is also an atom of $\mathcal{P}_j$.  
As each $\S^\v$ is a homeomorphism, this means that for every $j$, each $\S^\v$ simply permutes the elements of $\mathcal{P}_j$.  
Letting $E_j$ denote the  atom of $\mathcal{P}_j$ containing the identity element $0$ of the group $X_\gothG$, we can then define 
\[
H_j = \{\mathbf{h} \in \Z^{d_2} : \mathbf{S}^\mathbf{h}(E_j) = E_j\}.
\]
It is clear that each $H_j$ is a subgroup of $\Z^{d_2}$. \\

\noindent \emph{Claim 1:} For all $j$, $H_{j} \geq H_{j+1}$.  

Suppose $\mathbf{h} \in H_{j+1}$.  This means $\S^\h(E_{j+1}) = E_{j+1}$.  Since $E_{j+1} \subseteq E_j$, it follows that $\S^\h(E_j) \cap E_j \supseteq E_{j+1} \neq \emptyset$.  But from our earlier observation that each $\S^\v$ permutes the atoms of $\P_j$, we can conclude that $\S^\h(E_j) = E_j$ and so $\h \in H_j$ as desired. \\

\noindent \emph{Claim 2:} $\bigcap_j H_j = \{\mathbf{0}\}$.  

Let $\mathbf{w} \in \bigcap_{j} H_j$.  Then $\S^{\mathbf{w}}(E_j) = E_j$ for all $j$, which means that $\S^{\mathbf{w}}(0) = 0$.  
If $\mathbf{w} \neq \0$, this means that $\sigma_\gothG^\v(0) = 0$ for some nonzero vector $\v \in \Z^{d_1}$, contradicting the freeness of $\sigma_\gothG$. \\

\noindent \emph{Claim 3:} $H_j$ is a finite-index subgroup of $\Z^{d_2}$.  

We need to show there are only finitely many cosets of $H_j$.  We know there are 
only finitely many atoms in $\P_j$: denote them by $A_0=E_j$, $A_1$, ... , $A_k$. Since $S$ is minimal, we can find 
$\v_0$, $\v_1$, ..., $\v_k$ such that $\S^{\v_i}(E_j) = A_i$.  
Our claim will then be proved by showing that the cosets of $H_j$ are exactly $\{ \v_i + H_j : 0\leq i\leq k\}$.
So let $\w\in\Z^{d_2}$: we want to show that $\w$ is in one of $\{ \v_i + H_j : 0\leq i\leq k\}$.
Since $\S^{\w}$ permutes the elements of $\P_j$, 
we know $S^{\w}(E_j)$ is one of those atoms of $\P_j$, say $A_i$.  We also know that $S^{-\v_i}(A_i)=E_j$.  Thus $S^{-\v_i + \w}(E_j)=E_j$, 
meaning that $-\v_i +\w \in H_j$, i.e. $\w \in \v_i + H_j$ as wanted.  \\

In light of Claims 1, 2, and 3, we can conclude that $(X_\mathfrak{H}, \sigma_\mathfrak{H})$ is a free $\Z^{d_2}$-odometer.  What remains is to find $\phi: X_\gothG \rightarrow X_\mathfrak{H}$ with 
$\phi \circ \S^{\v} =  \sigma^{\v}_\mathfrak{H} \circ \phi$ for each $\v\in \Z^{d_2}$,
 which we will build out of a sequence of maps  $\phi_j : \P_j \to \Z^{d_2} / H_j$.  
 To define $\phi_j$, let $A$ be an atom in $\P_j$ and set 
 \[
 H_j(E_j,A) = \{\mathbf{h} \in \Z^{d_2} : \mathbf{S}^\mathbf{h}(E_j) = A\}.
 \]
 We then set $\phi_j(A) = \h + H_j$ where $\h\in  H_j(E_j,A)$.  This map is well-defined because of the following:\\
 
\emph{Claim 4:}   Fix $A$ to be an atom in $\P_j$ and $\h_1$ and $\h_2$ to be any two elements in $H_j(E_j,A)$.  
Then $\h_1 + H_j = \h_2 + H_j$.

We need to show that $\h_1 - \h_2 \in H_j$.  But since  $\mathbf{S}^\mathbf{h_1}(E_j) = A$ and  $\mathbf{S}^\mathbf{h_2}(E_j) = A$ by the definition of $ H_j(E_j,A) $, we immediately see that $\mathbf{S}^\mathbf{h_1 - h_2}(E_j) = E_j$, which gives us the result.  \\

The map $\phi_j$ respects the $\Z^{d_2}$ action in the following way: \\

\emph{Claim 5:}  For all $j\in\mathbb{N}$, all $A \in \P_j$ and all $\v \in \Z^{d_2}$, $\phi_j(\S^\v(A)) = \phi_j(A) + \v$ (where the addition is modulo $H_j$).

Let $\w\in \phi_j(\S^\v(A))$.  So $\w \in \h + H_j$ where $\h\in H_j(E_j,\mathbf{S}^\mathbf{v}(A))$, meaning $\mathbf{S}^\mathbf{h}(E_j) =\mathbf{S}^\mathbf{v}(A)$.  But then $\mathbf{S}^\mathbf{h-v}(E_j)=A$ so $\h- \v \in H_j(E_j,A)$.
On the other hand, $\w \in \h + H_j$ tells us that $\w - \v \in \h - \v + H_j$ and thus $\w - \v \in  \phi_j(A)$ or $\w \in \v +  \phi_j(A)$, as wanted. 

For the other direction, let $\w \in \v +  \phi_j(A) = \v + (\h + H_j)$ for some $\h\in H_j(E_j,A)$.  But if $\mathbf{S}^\mathbf{h}(E_j) = A$ then 
$\mathbf{S}^\mathbf{h+v}(E_j) = \mathbf{S}^\mathbf{v}(A)$ and we have $\h + \v \in H_j(E_j,\mathbf{S}^\mathbf{v}A)$.  This tells us that 
$\w\in (\v + \h) + H_j = \phi_j(\S^\v(A)) $, as wanted.
 \\
 
 We are now ready to define $\phi : X_\gothG \to X_{\mathfrak{H}}$.
 Given $x \in X_\gothG$, let $\mathcal{P}_j(x)$ be the atom of $\P_j$ containing $x$.  Note that $x$ is determined by this decreasing sequence of atoms of $\P_j$.  We can then define  
\[\phi(x) = (\phi_1(\P_1(x)), \, \phi_2(\P_2(x)), \, ...\, ).\]  
The remaining claims show that $\phi$ is the wanted conjugacy.
\\

\emph{Claim 6:} $\phi(x)\in X_{\mathfrak{H}}$.

By definition, $\phi_j(\P_j(x))\in \Z^{d_2} / H_j$ for all $j$, so we need only show that for each $j$, 
\[
q_j(\phi_{j+1}(\P_{j+1}(x)) = \phi_j(\P_j(x)),\]
where we recall that $q_j: \Z^{d_2}/H_{j+1} \to \Z^{d_2}/H_j$ is the quotient map.
In other words, if we denote $\phi_{j+1}(\P_{j+1}(x))$ by $\k + H_{j+1}$ and $\phi_j(\P_j(x))$ by $\h + H_j$, we just need to show that 
$\k \equiv \h$ mod $H_j$.  To do this, note that $E_{j+1}\subseteq E_j$ and $\P_{j+1}(x) \subseteq \P_j(x)$.  
Since by definition of $\phi_{j+1}$ we have that $\S^{\k}(E_{j+1}) = \P_{j+1}(x)$, this means $\S^{\k}(E_j)\cap \P_j(x) \neq \emptyset$.  Since 
$\S^{\k}$ simply permutes the elements of $\P_j$, we have $\S^{\k}(E_j) = \P_j(x)$.  We also know by definition of $\phi_j$ that 
$\h\in H_j(E_j, \P_j(x))$ or $\S^{\h}(E_j) = \P_j(x)$.  Thus $\S^{\k - \h}(E_j) = E_j$ and $\k - \h \in H_j$, as needed.
\\

\emph{Claim 7:} $\phi$ is injective. 

We first show that each $\phi_j$ is injective.  Let $A,B\in \P_j$.  Denote $\phi_j(A)$ by $\h_1 + H_j$ and 
$\phi_j(B)$ by $\h_2 + H_j$, so $\S^{\h_1}(E_j)=A$ and  $\S^{\h_2}(E_j)=B$.  If $\phi_j(A) = \phi_j(B)$ then we must have 
$\h_1 - \h_2 \in H_j$ or $\S^{\h_1 - \h_2}(E_j) = E_j$.  We can rewrite this as $\S^{-\h_2 + \h_1}(E_j) = \S^{-\h_2}(A) =  E_j$ or 
$\S^{\h_2}(E_j) = A$.  But then we have that $A=B$ and thus $\phi$ is injective.

We now consider $x,y\in X_\gothG$.  If $\phi(x) = \phi(y)$ then we have $\phi_j(\P_j(x)) = \phi_j(\P_j(y))$ for every $j$.  
By the above this means $\P_j(x) = \P_j(y)$ for every $j$, which can only happen if $x=y$.
\\

\emph{Claim 8:} $\phi$ is surjective. 

We first show that each $\phi_j$ is surjective.  Let $\h + H_j \in \Z^{d_2} / H_j$.  We know $\S^{\h}(E_j)$ is some atom of $P_j$: call it $A$.  
Thus $\h \in H_j(E_j,A)$ and $\phi_j(A) = \h + H_j$, as wanted.

Now consider $(x_1, x_2, ... )\in X_{\mathfrak{H}}$.  Since $x_j\in \Z^{d_2} / H_j$, we can write it as $x_j = \h_j + H_j$ and,
by the above, find $A_j$ such that $\phi_j(A_j) = \h_j + H_j$.  Since we have $q_j(x_{j+1}) = x_j$ for every $j$ by assumption, 
we then have $\h_{j+1} \equiv \h_j$ mod $H_j$ and so $\S^{\h_{j+1}}(E_j)=A_j$ as well as $\S^{\h_{j+1}}(E_{j+1})=A_{j+1}$.  But we know that 
$E_{j+1}\subseteq E_j$ and thus we must have $A_{j+1}\subseteq A_j$, meaning $\{ A_j \}$ is a decreasing sequence of nested atoms.  
By Theorem \ref{KRpartthm} there is exactly one point $x\in \cap_j A_j$ and we then have $\phi(x) = (x_1, x_2, ... )$ as wanted.
\\

\emph{Claim 9:} $\phi$ and $\phi^{-1}$ are continuous.

This follows because the image and pre-image of any cylinder set is a cylinder set.
\\

\emph{Claim 10:} $\phi$ intertwines the actions of $\S$ and $\sigma_\mathfrak{H}$.

If $x$ is an element of $A\in \P_j$, then $\S^{\v}(x)$ is an element of $\S^{\v}(A)$ and we have that $\P_j(\S^{\v}(x)) = \S^{\v}(\P_j(x))$.
Thus 
\begin{align*}
\phi (\S^{\v}(x)) & =  ( \phi_1(\P_1 (\S^{\v}(x))),  \, \phi_2(\P_2 (\S^{\v}(x))), \, ...\, ) \\
& = (\phi_1(\S^{\v}(\P_1(x))),  \, \phi_2 (\S^{\v}(\P_2(x))),\,  ...\, ) \\
&= (\v + \phi_1(\P_1(x)), \, \v + \phi_2(\P_2(x)), \, ...\, ) \\
& \qquad \textrm{ (by applying Claim 5)} \\
& = \sigma_\mathfrak{H}^{\v} (\phi_1(\P_1(x)), \, \phi_2(\P_2(x),\,  ...\, ) \\
& =  \sigma_\mathfrak{H}^{\v}(\phi(x)).
\end{align*} 
\\
Thus $\phi$ gives a conjugacy between $(X_{\mathfrak{H}}, \sigma_\mathfrak{H})$ and $(X_\gothG,\S)$, making $\S$ an odometer as wanted.

 \end{proof}

Our work yields the following result in dimension $1$, first proven by Alvin, Ash and Ormes \cite{AAO} using combinatorial methods:

%
%

\begin{cor} Let $(X,T)$ be a $\Z$-odometer and suppose $S$ is a minimal speedup of $T$, where the speedup function $p$ is bounded.  Then $(X,S)$ is a $\Z$-odometer, conjugate to $(X,T)$.
\end{cor}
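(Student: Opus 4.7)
The plan is to read this corollary as a direct consequence of the two results that have now been assembled: Theorem \ref{bddspeedupthm} and Corollary \ref{odometercorollary}. First, I would apply Theorem \ref{bddspeedupthm} with $d_1 = d_2 = 1$. Any $\Z$-odometer presented as a Cantor system is automatically free (its defining sequence $\gothG = \{G_j\}$ must satisfy $\bigcap G_j = \{\0\}$ by Theorem \ref{basicodomprops}(2), else the phase space would be finite), so the freeness hypothesis is met. The conclusion is that $(X,S)$ is itself a $\Z$-odometer, which establishes the first claim of the corollary.

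Next, I would invoke Corollary \ref{odometercorollary}. Since $(X,S)$ is a speedup of $(X,T)$ by hypothesis, via the given bounded speedup cocycle $p$, it is trivially conjugate to a speedup of $(X,T)$ via the identity homeomorphism. Both $(X,T)$ and $(X,S)$ are now known to be $\Z$-odometers, so Corollary \ref{odometercorollary} applies and yields that they are topologically conjugate, which completes the proof. Note that Corollary \ref{odometercorollary} is stated for general (not necessarily $\C$-) speedups, because its proof goes through Corollary \ref{uniqergcor1}, which in turn rests on Theorem 2.2 of \cite{GPS} and does not require the cocycle to sit in a particular cone; this is important since the given bounded cocycle $p$ need not take values in $\Z^+$ or $\Z^-$ alone.

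There is no genuine obstacle in the argument for the corollary itself; all the substantive work has been done in the preceding results. The boundedness of $p$ is the crucial hypothesis that unlocks Theorem \ref{bddspeedupthm} by forcing $x \mapsto p(x,\v)$ to be constant on the atoms of a sufficiently refined Kakutani-Rohklin partition, and the rigidity of $\Z$-odometers under orbit equivalence (via the Boyle-Tomiyama theorem cited in Corollary \ref{odometercorollary}) is what upgrades orbit equivalence to conjugacy in dimension one. Without boundedness, one could not conclude $(X,S)$ is even an odometer and the chain would collapse at the first step; this is precisely the feature that makes the one-dimensional story rigid and recovers the combinatorial result of Alvin, Ash, and Ormes \cite{AAO} from the dynamical framework developed here.
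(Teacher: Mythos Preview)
Your proposal is correct and follows exactly the same two-step approach as the paper's proof: apply Theorem \ref{bddspeedupthm} to conclude $(X,S)$ is a $\Z$-odometer, then invoke Corollary \ref{odometercorollary} for the conjugacy. Your added verification that a $\Z$-odometer on a Cantor space is automatically free (needed for the hypothesis of Theorem \ref{bddspeedupthm}) and your observation that Corollary \ref{odometercorollary} applies to arbitrary speedups rather than only $\C$-speedups are both correct and fill in details the paper leaves implicit.
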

\begin{proof} By Theorem \ref{bddspeedupthm}, $(X,S)$ is a $\Z$-odometer.  The result then follows from Corollary \ref{odometercorollary}.
\end{proof}

\vspace{.1in}

Despite the fact that the bounded speedup of an odometer is an odometer, in the higher-dimensional case the speedup is not necessarily conjugate, nor even isomorphic, to the original odometer.  We show this in the following theorem where we find a $\Z^2$-odometer and construct a bounded speedup of it that is not isomorphic to the original odometer.

%
%

\begin{thm} 
\label{notconjugate} 
There exists a pair of nonisomorphic $\Z^2$-odometers, for which one is a speedup of the other.
\end{thm}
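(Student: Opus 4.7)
The plan is to exhibit a specific pair of non-isomorphic $\Z^2$-odometers, one a bounded speedup of the other, by arranging that their first cohomology groups are not in the same $GL_2(\Z)$-orbit. I would take $(X,\T)$ to be the product of a $2$-adic odometer and a $3$-adic odometer on $X = \Z_2 \times \Z_3$, so that $H(\T) = \Z[1/2] \times \Z[1/3]$, and construct the speedup $\S$ via the bounded continuous cocycle generated by $\p(x,\e_1)=(2,1)$ and $\p(x,\e_2)=(h(y_1),1)$, where $x=(y_1,y_2)$ and $h(y_1)=1$ for $y_1 \in 2\Z_2$, $h(y_1)=-1$ for $y_1 \in 1+2\Z_2$.

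The first substantive step is to check that this defines a valid minimal bounded speedup: commutativity of the two generators reduces to $h(y_1+2)=h(y_1)$, which is automatic, and $\S^{\e_2}$ is a homeomorphism because it swaps the two parity classes of $y_1$. Minimality is the main technical obstacle, since neither $\S^{\e_1}$ nor $\S^{\e_2}$ alone has dense orbits; however the combined $\Z^2$-action should be minimal by using density of $\Z$ in $\Z_2 \times \Z_3$ via the Chinese Remainder Theorem to show that the $\S$-orbit of every point hits every atom of every K-R partition of $\T$.

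With minimality in hand, Theorem \ref{bddspeedupthm} identifies $(X,\S)$ as a $\Z^2$-odometer with Cortez sequence $\gothH=\{H_j\}$ where $H_j = \{\h \in \Z^2 : \S^{\h}(E_j)=E_j\}$ and $E_j = 2^j\Z_2 \times 3^j\Z_3$ is the base atom of the standard K-R partition $\P_j$ for $\T$. Iterating the cocycle gives the explicit formula $\S^{(c,d)}(y_1,y_2) = (y_1+2c,\,y_2+c+d)$ if $d$ is even and $(y_1+2c+h(y_1),\,y_2+c+d)$ if $d$ is odd; since $E_j \subseteq 2\Z_2 \times \Z_3$ forces $h(y_1)=1$, a direct calculation yields
\[
H_j = \{(c,d) \in \Z^2 : c \in 2^{j-1}\Z,\ d \in 2\Z,\ c+d \in 3^j\Z\}
\]
for $j \geq 1$. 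Dualizing and taking the union over $j$ produces
\[
H(\S) \,=\, \{(u,v) \in \mathbb{Q}^2 : v \in \tfrac{1}{2}\Z[1/3],\ u-v \in \Z[1/2]\} \,=\, \Z[1/2]\cdot(1,0) \,+\, \tfrac{1}{2}\Z[1/3]\cdot(1,1).
\]

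Finally I would invoke Theorem \ref{isomorphismtest}(2): any $\alpha \in GL_2(\Z)$ with $\alpha(H(\T)) = H(\S)$ preserves $\Z^2$ and hence induces an isomorphism $H(\T)/\Z^2 \cong H(\S)/\Z^2$ of abstract abelian groups. Using $\tfrac{1}{2}\Z[1/3] = \tfrac{1}{2}\Z + \Z[1/3]$ with intersection $\Z$ gives $H(\S)/\Z^2 \cong \Z[1/2]/\Z \oplus \Z/2 \oplus \Z[1/3]/\Z$, which should be compared with $H(\T)/\Z^2 \cong \Z[1/2]/\Z \oplus \Z[1/3]/\Z$; their $2$-torsion subgroups have orders $4$ and $2$ respectively, so they are non-isomorphic. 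Hence no such $\alpha$ exists, and $(X,\T)$ and $(X,\S)$ furnish the desired pair of non-isomorphic $\Z^2$-odometers.
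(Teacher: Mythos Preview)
Your argument is correct, and the overall architecture matches the paper's: exhibit an explicit product-type $\Z^2$-odometer built from a $2$-adic and a $3$-adic factor, define a bounded speedup via a locally constant cocycle, invoke Theorem~\ref{bddspeedupthm} to identify the speedup as an odometer, compute its first cohomology group, and appeal to Theorem~\ref{isomorphismtest}(2).

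The differences are in the details. The paper takes $\gothG=\{3^j\Z\times 2^j\Z\}$ and uses the cocycle $\p_1\equiv\e_1$, $\p_2\in\{\e_2,\e_1+\e_2\}$; it then computes $H(\S)$ by writing each $H_j$ as $M_j\Z^2$ for an explicit upper-triangular $M_j$, dualizing, and finally rules out a $GL_2(\Z)$-intertwiner by solving for the matrix entries directly (forcing $b=0$, then $c=\tfrac12 a$, contradicting $\det\alpha=\pm1$). Your example uses a genuinely different cocycle---with $\p(\cdot,\e_1)=(2,1)$ constant and $\p(\cdot,\e_2)\in\{(1,1),(-1,1)\}$---and your non-isomorphism step is more conceptual: since any $\alpha\in GL_2(\Z)$ fixes $\Z^2$, it must induce an isomorphism $H(\T)/\Z^2\cong H(\S)/\Z^2$, and you distinguish these quotients by their $2$-torsion. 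This invariant-based argument is cleaner and reusable; the paper's matrix computation is more hands-on but entirely elementary. Your minimality and $H_j$ computations are only sketched, but they check out (for $j\ge 2$ one has $c+d\in 2\cdot 3^j\Z$ automatically, which is what makes the dualization land in $\tfrac12\Z[1/3]$), so the proposal constitutes a complete alternative proof.
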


\begin{proof} 
To describe the first odometer, set $G_j = 3^j \Z \times 2^j\Z$ for each $j \geq 1$.  As described in Theorem \ref{basicodomprops}, the sequence $\gothG = \{G_1, G_2, G_3, ...\}$ then yields a minimal $\Z^2$-odometer $(X_\gothG, \sigma_\gothG)$.  
The second odometer will be a speedup of $(X_\gothG, \sigma_\gothG)$, using the partition $\P_1$ as described in Theorem \ref{KRpartthm}  to define the speedup cocycles.  Note that $\P_1$ in this case consists of six sets associated to the six cosets in $\Z^2/G_1$.  We then define $\p_1, \p_2 : X \to \Z^2$ by setting $\p_1(x) = (1,0)$ and \[
\p_2(x) = \left\{\begin{array}{cl}
(0,1) & \textrm{ if $x \in \pi_1^{-1}\left(\{(0,0) + G_1, (1,0) + G_1, (2,0) + G_1\}\right)$} \\
(1,1) & \textrm{ if $x \in \pi_1^{-1}\left(\{(0,1) + G_1, (1,1) + G_1, (2,1) + G_1\}\right)$}.
 \end{array}
\right.\]
The $\Z^2$-action $(X_\gothG,\S)$ generated by $\p_1$ and $\p_2$ is therefore a bounded $\C$-speedup of 
 $\sigma_{\gothG}$ for any cone $\C$ containing the positive $x$-axis and the portion of the line $y=x$ lying in the first quadrant; 
 by Theorem \ref{bddspeedupthm}, $(X_\gothG, \S)$ is a $\Z^2$-odometer.  
 Note that $(X_\gothG, \sigma_\gothG)$ and $(X_\gothG, \S)$ are continuously orbit equivalent  using the identity map and the 
 speedup cocycle $\p$ for the orbit cocycle.  Therefore the orbits of $(X_\gothG, \sigma_\gothG)$ are exactly the same sets as the orbits of $(X_\gothG, \S)$, so since $(X_\gothG, \sigma_\gothG)$ is minimal, $(X_\gothG, \S)$ is minimal as well.

It remains to show that $(X_\gothG, \sigma_\gothG)$ and $(X_\gothG, \S)$ are not isomorphic. We will do this by computing the first cohomology group of each odometer and then using Theorem \ref{isomorphismtest}.  For $H(\sigma_{\gothG})$, note that  
$G_j = 3^j \Z \times 2^j\Z$ has $G_j^* = \frac 1{3^j}\Z \times \frac 1{2^j}\Z$ and thus 
$H(\sigma_{\gothG}) = \stackrel[j=1]{\infty}{\cup}  
G_j^* = \Z[\frac 13] \times \Z[\frac 12]$.  To find $H(S)$, we will follow the construction in the proof of Theorem \ref{bddspeedupthm} to find a sequence of subgroups $ \{G'_1, G'_2, G'_3, ...\}$ associated to $(X_\gothG, S)$ and then similarly compute 
$H(S) = \stackrel[j=1]{\infty}{\cup}  (G'_j)^*$.

Beginning with $G'_1$, recall it is defined to be $\{ \h\in\Z^2: S^{\h}(E_1)=E_1\}$, where $E_1$ is the atom of $\P_1$ containing the identity element of $X_\gothG$.  
We can see that $(3,0)$ and $(2,2)$ are in  $G'_1$ by definition of $\p_1$ and $\p_2$ and then check that these in fact generate all of $G'_1$.
Thus 
\[G'_1 =  \left(\begin{array}{cc} 
3 & 2 \\ 
0 & 2 
\end{array} \right)\Z^2.\] 
In a similar fashion, we can show that 
\[G'_j = \left(\begin{array}{cc} 
3^j & 3^j - 2^{j-1} \\
0 & 2^j
\end{array}
\right)\Z^2.\]
We can therefore calculate
\begin{align*}
(G_1')^* & = \{ (p,q)\in \R^2: (p,q)\cdot (a,b) \in \Z \text{ for all } (a,b)\in G'_1 \} \\
& = \{ (p,q)\in \R^2: (p,q)\cdot (3,0) \in \Z \text{ and } (p,q)\cdot (2,2)\in \Z \} \\
& = \{ (p,q)\in \R^2: \left(\begin{array}{cc} 
                           3 & 0 \\ 
                           2 & 2 
                              \end{array} \right) 
                          \left( \begin{array}{c} p \\ q \end{array}\right) \in \Z^2 \} \\
& = \{ (p,q)\in \R^2: \text{ there exists}(m,n)\in\Z^2 \text{ with }                          
                   \left( \begin{array}{c} p \\ q \end{array}\right)  
                   = 
                   \left(\begin{array}{cc} 
                           3 & 0 \\ 
                           2 & 2 
                              \end{array} \right)^{-1} 
                    \left( \begin{array}{c} m \\ n \end{array}\right)  \}  \\
& =     \left(\begin{array}{rc} 
                           1/3 & 0 \\ 
                           -1/3 & 1/2
                              \end{array} \right)\Z^2 \, = \, \frac{1}{6} \left(\begin{array}{rc} 
                           2 & 0 \\ 
                           -2 & 3
                              \end{array} \right)\Z^2.                
\end{align*}
In a similar fashion, we can show that 
\begin{align*}
(G_j')^* & = \frac{1}{6^j} \left(\begin{array}{cc} 
                           2^j & 0 \\ 
                           2^{j-1}-3^j & 3^j
                              \end{array} \right)\Z^2 \\
                &= \left\{ \left( \begin{array}{c}
                           \frac{1}{3^j} m \\ (\frac{1}{6\cdot 3^{j-1}} - \frac{1}{2^j} ) m + \frac{1}{2^j} n 
                           \end{array}\right): m,n\in \Z \right\} 
\end{align*}
But if $(x,y)\in (G_j')^*$, then $x = \frac{1}{3^j} m$ and $y = (\frac{1}{2} - (\frac{3}{2})^j)x + \frac{1}{2^j}n$
or $y-1/2 x = \frac{1}{2^j}n - \frac{3^j}{2^j}x = \frac{1}{2^j} n - \frac{1}{2^j} m$.
Thus 
\[
H(\S) = \bigcup_{j=1}^\infty (G_j')^* = \left\{(x,y) \in \Z^2 : x \in \Z\left[\frac 13\right], y - \frac 12 x \in \Z\left[\frac 12\right]\right\}.
\]   
We then see that  $H(\S)$ contains $\left(\frac 13, \frac 16\right)$ but $H(\sigma_\gothG)$ does not.  
So by (1) of Theorem \ref{isomorphismtest},  $(X_\gothG, \sigma_\gothG)$ and $(X_\gothG, \S)$ are not conjugate.  

To show that they are not isomorphic, we use contradiction.  Suppose they are isomorphic: then by (2) of Theorem \ref{isomorphismtest},
there is $\alpha \in GL_2(\Z)$ such that $\alpha(H(\sigma_\gothG)) = H(\S)$.  Denote $\alpha$ as 
$\left(\begin{array}{cc} a & b \\ c & d \end{array}\right) \in GL_2(\Z)$.  Since $(0,1/2^j) \in H(\sigma_\gothG)$ for every $j$, we must have 
\[
\left(\begin{array}{cc} a & b \\ c & d \end{array}\right)
\left( \begin{array}{c} 0 \\ 1/2^j\end{array}\right)
= 
\left( \begin{array}{c} b/2^j \\ d/2^j\end{array}\right)
\in H(\S)
\]
for every $j$.  In other words, $b/2^j \in \Z[\frac{1}{3}]$ for every $j$, which implies $b=0$.
We also know $(1/3^j,0) \in H(\sigma_\gothG)$ for every $j$, and thus we must have 
\[
\left(\begin{array}{cc} a & 0 \\ c & d \end{array}\right)
\left( \begin{array}{c} 1/3^j\\ 0 \end{array}\right)
= 
\left( \begin{array}{c} a/3^j \\ c/3^j\end{array}\right)
\in H(\S)
\]
for every $j$.  This means $\frac{c}{3^j} - \frac{1}{2}\, \frac{1}{3^j} = \frac{1}{3^j} (c-\frac{1}{2}a) \in \Z[\frac{1}{2}]$ for every $j$, and thus $c=\frac{1}{2} a$.  But this means the determinant of matrix $\alpha$ is 
\[
\det \left(\begin{array}{cc} 2c & 0 \\ c & d \end{array}\right) = 2cd.
\]
But $2cd$ cannot equal $\pm $1 for any $c,d\in\Z$, contradicting our assumption that $\alpha\in  GL_2(\Z)$.  \end{proof}

As mentioned in the above proof, the odometers $(X_\gothG,\sigma_\gothG)$ and $(X_\gothH,\sigma_\gothH)$ are continuously orbit equivalent even though they are not isomorphic. We conjecture that this is always true, at least in the $d=2$ case, and leave it as an open problem:

%
%
%

\begin{conj} \label{bddspeedupctsoe} Let $(X_\gothG,\sigma_\gothG)$ be a free $\Z^2$-odometer and suppose $\S : \Z^2 \actson X$ is a minimal speedup of $\sigma_\gothG$, where the speedup cocycle $\mathbf{p}$ is bounded.   Then the $\Z^2$-odometer $(X_\gothG,\S)$ is continuously orbit equivalent to $(X_\gothG,\sigma_\gothG)$.
\end{conj}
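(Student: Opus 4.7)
The plan is to verify Conjecture \ref{bddspeedupctsoe} via Theorem \ref{isomorphismtest}(3), which characterizes continuous orbit equivalence of free $\Z^2$-odometers as the existence of $\alpha \in GL_2(\mathbb{Q})$ with $\det \alpha = \pm 1$ such that $\alpha(H(\sigma_\gothG)) = H(\S)$. The hypothesis of this theorem applies because Theorem \ref{bddspeedupthm} already provides that $(X_\gothG, \S)$ is a free $\Z^2$-odometer.

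The natural candidate for $\alpha$, suggested by the computation in Theorem \ref{notconjugate}, is $\alpha = M^T$ where $M \in M_{2\times 2}(\mathbb{Q})$ is the ``average slope'' matrix whose columns are $\int_{X_\gothG} \p(x, \e_j)\,d\mu_\gothG(x)$ for $j=1,2$. Lemma \ref{speedupmeasures} supplies $\S$-invariance of $\mu_\gothG$, so the cocycle identity makes $\v \mapsto \int \p(x,\v)\,d\mu_\gothG$ a $\Z$-linear map $\Z^2 \to \mathbb{Q}^2$ that extends uniquely to a $\mathbb{Q}$-linear map $M : \mathbb{Q}^2 \to \mathbb{Q}^2$. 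In the example worked out in the proof of Theorem \ref{notconjugate}, a direct check gives $M^T(H(\sigma_\gothG)) = H(\S)$ with $\det M = 1$, which is encouraging.

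To push this through in general, the core step I would need is $M(G_j') = G_j$ for $j$ sufficiently large, where $\{G_j'\}$ is the subgroup sequence for $(X_\gothG, \S)$ constructed in the proof of Theorem \ref{bddspeedupthm}. For any $\v \in G_j'$, the homeomorphism $\S^\v$ fixes each atom of $\P_j$ setwise, so $\p(\cdot, \v)$ takes values in $G_j$. The technical lemma I would try to establish is that, for $j$ large enough, $\p(\cdot, \v)$ is actually globally constant on $X_\gothG$; I would prove this by comparing values on two atoms of $\P_j$ linked by some $\S^\w$, using local constancy of $\p(\cdot, \w)$ together with the fact that $\S^\v$ fixes each atom when $\v \in G_j'$. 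Global constancy then gives $M\v \in G_j$, and combined with the equality $[\Z^2 : G_j] = [\Z^2 : G_j']$ (each index counts the atoms of $\P_j$, as observed in the proof of Theorem \ref{bddspeedupthm}), this forces $M : G_j' \to G_j$ to be a bijection with $|\det M| = 1$. Dualizing under $E \mapsto E^*$ yields $M^T(G_j^*) = (G_j')^*$, and taking unions over $j$ delivers $M^T(H(\sigma_\gothG)) = H(\S)$.

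The main obstacle, I anticipate, is the global-constancy step. The threshold $j_0$ at which $\p(\cdot, \v)$ becomes constant on atoms of $\P_{j_0}$ depends on $\v$, while the constraint $\v \in G_j'$ requires $j$ small enough (since the $G_j'$ decrease). Coordinating these so that $\v$ lies in some $G_j'$ at a $j \geq j_0(\v)$, or at least finding a generating subset of each $G_j'$ for which this works, is the delicate part. If that coordination fails for the identity map as the candidate orbit equivalence, an alternative route would be to abandon identifying the orbit equivalence with the identity and instead produce a conjugating homeomorphism $F : X_\gothG \to X_\gothG$ whose orbit cocycles are continuous by construction, exploiting the conjugacy $\phi$ to $(X_\mathfrak{H}, \sigma_\mathfrak{H})$ from Theorem \ref{bddspeedupthm}.
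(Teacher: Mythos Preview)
The statement you are attempting to prove is labeled as a \emph{Conjecture} in the paper and is explicitly left open: the authors write ``We conjecture that this is always true, at least in the $d=2$ case, and leave it as an open problem.'' There is therefore no proof in the paper to compare your proposal against.

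On the merits of your proposal itself, the concern you flag as the ``main obstacle'' is not actually the difficulty. The proof of Theorem~\ref{bddspeedupthm} already produces a single level $J$ such that $\p(\cdot,\v)$ is constant on atoms of $\P_J$ for \emph{every} $\v\in\Z^2$ simultaneously (once the generators $\p(\cdot,\e_j)$ are constant on atoms of $\P_J$ and $\S$ permutes those atoms, the cocycle equation propagates constancy to all $\v$). For $\v\in G_J'$ your commutativity argument then does yield global constancy of $\p(\cdot,\v)$, so that step goes through.

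The genuine gap is the step you pass over quickly: the claim that injectivity of $M:G_j'\to G_j$ together with $[\Z^2:G_j']=[\Z^2:G_j]$ ``forces $M:G_j'\to G_j$ to be a bijection with $|\det M|=1$'' is false. An injective endomorphism of a rank-$2$ lattice need not be surjective. Concretely, take $\gothG$ with $G_j=3^j\Z\times 2^j\Z$ and the constant cocycle $\p(x,(v_1,v_2))=(2v_1,v_2)$. Since $2$ is a unit modulo every power of $3$, the resulting $\S$ is a minimal bounded speedup, and one checks directly that $G_j'=G_j$ for all $j$, so $(X_\gothG,\S)$ is conjugate to $(X_\gothG,\sigma_\gothG)$. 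Here
\[
M=\begin{pmatrix}2&0\\0&1\end{pmatrix},\qquad \det M=2,
\]
and $M^T$ does \emph{not} carry $H(\sigma_\gothG)=\Z[\tfrac13]\times\Z[\tfrac12]$ onto itself, because $1\notin 2\,\Z[\tfrac13]$. Thus your candidate $\alpha=M^T$ fails even in a case where the conjecture holds trivially by conjugacy. The average-slope matrix is simply not the right object; any proof of the conjecture would have to manufacture the matrix $\alpha$ by some other mechanism, and this is presumably why the authors leave the question open.
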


The result in the other direction is almost immediate:

%
%

\begin{thm} 
\label{orbteqvandspeedups}
Let $(X_\gothG,\sigma_\gothG)$ and and $(X_\gothH,\sigma_\gothH)$ be $\Z^d$-odometers that are continuously orbit equivalent.  Then there exists a cone $\C\subseteq \Z^d$ such that $(X_\gothH,\sigma_\gothH)$ is conjugate to a $\C$-speedup of $(X_\gothG,\sigma_\gothG)$.
\end{thm}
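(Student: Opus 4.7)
The plan is to realize the conjugacy and speedup directly from the continuous orbit equivalence. Let $\Phi \colon X_\gothG \to X_\gothH$ be the given continuous orbit equivalence with continuous orbit cocycle $\h_\Phi$. First, I would define a $\Z^d$-action $\S$ on $X_\gothG$ by
\[
\S^\v(x) := \Phi^{-1}\bigl(\sigma_\gothH^\v(\Phi(x))\bigr) = \sigma_\gothG^{\h_\Phi(x,\v)}(x).
\]
By construction $\Phi$ is a topological conjugacy between $(X_\gothG, \S)$ and $(X_\gothH, \sigma_\gothH)$, and $\S$ is a speedup of $\sigma_\gothG$ with speedup cocycle $\p(x,\v) := \h_\Phi(x,\v)$. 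Because $\p$ is continuous, $X_\gothG$ is compact, and $\Z^d$ is discrete, for each $j \in \{1, \ldots, d\}$ the image $F_j := \{\p(x,\e_j) : x \in X_\gothG\}$ is a finite subset of $\Z^d - \{\0\}$ (nonvanishing by freeness of the two odometers). Setting $F := F_1 \cup \cdots \cup F_d$, everything reduces to exhibiting a cone $\C \subseteq \Z^d$ containing $F$, since then $\S$ is the desired $\C$-speedup.

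To produce $\C$, I would invoke the asymptotic slope of $\h_\Phi$, namely the linear map $\alpha \colon \R^d \to \R^d$ defined by $\alpha(\v) := \int_{X_\gothG} \h_\Phi(x,\v)\, d\mu_\gothG(x)$. Linearity follows from the cocycle identity combined with the fact that $\mu_\gothG$ is $\S$-invariant (Lemma \ref{speedupmeasures}), and $\S$ is itself a free $\Z^d$-odometer by Theorem \ref{bddspeedupthm}. For continuously orbit equivalent $\Z^d$-odometers, the asymptotic slope $\alpha$ is a well-studied invariant and is invertible; for $d \leq 2$ this is part of the Giordano-Putnam-Skau rigidity (Theorem \ref{isomorphismtest}(3) writes $\alpha \in GL_d(\mathbb{Q})$ with $\det\alpha = \pm 1$), and analogous reasoning applies in higher dimensions. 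Thus $\alpha(\e_1), \ldots, \alpha(\e_d)$ are linearly independent vectors in $\R^d$ which span an open simplicial cone bounded by $d$ hyperplanes through $\0$.

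The final step is to enlarge this simplicial cone into one that contains all of $F$. Because the deviations $\h_\Phi(x,\e_j) - \alpha(\e_j)$ are uniformly bounded (the cocycle takes only finitely many values), a small widening of the cone suffices in principle, and intersecting the resulting filled cone with $\Z^d - \{\0\}$ yields the cone $\C$. The main obstacle is ensuring that this widening stays within the restrictive definition of a cone (open, simplicial, bounded by exactly $d$ hyperplanes through $\0$) when the deviations are large relative to the angular opening of the cone generated by $\{\alpha(\e_j)\}$; in such marginal cases, one may first need to pre-compose $\Phi$ with an orbit-preserving homeomorphism of $X_\gothG$ from the topological full group of $\sigma_\gothG$, chosen to adjust $\alpha$ so the cone spanned by $\alpha(\e_1), \ldots, \alpha(\e_d)$ is spacious enough to absorb the bounded fluctuations of $\h_\Phi$. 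Once $\C$ is in hand, the speedup $\S$ is the desired $\C$-speedup conjugate to $\sigma_\gothH$.
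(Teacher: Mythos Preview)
Your first two paragraphs reproduce the paper's argument exactly: pull back $\sigma_\gothH$ through $\Phi$ to get a speedup $\S$ of $\sigma_\gothG$ with cocycle $\p = \h_\Phi$, and use continuity plus compactness to see that each $\p(\cdot,\e_j)$ takes only finitely many values. The paper then concludes in a single sentence: ``Let $\C$ be any cone that contains all the values of the $\h(x,\e_j)$.'' There is no asymptotic slope, no widening, no appeal to the topological full group.

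Everything from your third paragraph onward is an attempt to justify the existence of such a $\C$, which the paper simply asserts. That attempt has real gaps. Theorem~\ref{isomorphismtest}(3) only gives $\alpha \in GL_d(\mathbb{Q})$ for $d \le 2$, so your appeal to it for general $d$ is unsupported. More seriously, even granting that $\alpha(\e_1),\ldots,\alpha(\e_d)$ are linearly independent, a ``small widening'' of the simplicial cone they span need not absorb $F$: the deviations $\h_\Phi(x,\e_j)-\alpha(\e_j)$ are bounded in norm, but nothing you have said rules out some $\h_\Phi(x,\e_j)$ lying in the opposite half-space from $\alpha(\e_j)$, and no cone (being closed under addition) can contain a vector together with its negative. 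Your fallback---pre-composing $\Phi$ with a full-group element to make the cone ``spacious enough''---is asserted with no mechanism for how such an element would be selected or why it would succeed. So while you have correctly put your finger on a point the paper leaves unelaborated (an arbitrary finite subset of $\Z^d \setminus \{\0\}$ need \emph{not} lie in any cone), your proposed repair does not actually close that gap, and the machinery you introduce is not what the paper uses.
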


\begin{proof}
The definition of continuous orbit equivalence gives us a homeomorphism $\Phi : X_\gothG \to X_\gothH$ and a continuous orbit
cocycle $\h$ such that $\Phi\, \circ\, \sigma_\gothG^{\h(x,\v)}(x) = \sigma_\gothH^\v \circ \Phi(x)$ for all $x \in X$ and $\v \in \Z^d$.  Since $\h$ is continuous and $X_\gothG$ compact, we see that each function $x \mapsto \h(x, \e_j)$ can only take finitely many values as $x$ ranges over $X_\gothG$. Let $\C$ be any cone that contains all the values of the $\h(x, \e_j)$.  We can then define a speedup cocycle by $\p(x,\v) = \h(x,\v)$: the homeomorphism $\Phi$ then gives a conjugacy between the $\C$-speedup so defined and  $(X_\gothH,\sigma_\gothH)$.  
\end{proof}

The above theorem assures us that given two continuously orbit equivalent odometers, there is a cone $\C$ (in fact, many cones) such that one odometer is a bounded $\C$-speedup of the other.  But does this remain true for an arbitrary cone $\C$?  The answer can be no, even if the odometers are assumed conjugate.  We prove this in Corollary \ref{nospeedup}  via an example constructed in Theorem \ref{nospeedup}.  We also show in Theorem  \ref{notrigid}  that there are odometers for which the shape of the cone $\C$ is not an obstruction. We first need this preliminary lemma:

%
%

\begin{lemma}
\label{groupstructure}
Let $G < \mathbb{Z}^2$ be a group with index $[\mathbb{Z}^2:G]=6^j$ for some integer $j$.  If 
there exist $m,\widetilde{m}\in\mathbb{N}$ with 
\[  
 3^{m}\Z \times 2^{m}\Z \leq G \leq 3^{\widetilde{m}}\Z \times 2^{\widetilde{m}}\Z,
\]
then $G = 3^{j}\Z \times 2^{j}\Z$.
\end{lemma}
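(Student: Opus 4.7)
The plan is to put $G$ into Hermite normal form and read off its structure from the two inclusions. Any finite-index subgroup $G \leq \Z^2$ can be written uniquely as $G = \Z(p,0) + \Z(q,r)$ with integers $p, r > 0$ and $0 \leq q < p$, and then $[\Z^2 : G] = pr$; so the hypothesis gives $pr = 6^j$. The strategy is to show, using the two inclusions plus coprimality of $2$ and $3$, that $q = 0$, $p = 3^j$, and $r = 2^j$.

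First I would apply the upper inclusion $G \leq 3^{\widetilde{m}}\Z \times 2^{\widetilde{m}}\Z$ to the generators $(p,0)$ and $(q,r)$ to obtain $3^{\widetilde{m}} \mid p$, $3^{\widetilde{m}} \mid q$, and $2^{\widetilde{m}} \mid r$. Next I would use the lower inclusion: since $(3^m, 0), (0, 2^m) \in G$, I would write each as an integer combination of the generators and examine the second coordinate. The equation for $(3^m,0)$ forces the $(q,r)$-coefficient to vanish, hence $p \mid 3^m$; the equation for $(0, 2^m)$ forces that coefficient to equal $2^m/r$, hence $r \mid 2^m$. Combined with the upper divisibilities, $p = 3^a$ and $r = 2^b$ for some $\widetilde{m} \leq a, b \leq m$.

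It remains to eliminate $q$. Expanding $(0, 2^m) = \alpha(p,0) + 2^{m-b}(q,r)$ gives $\alpha\cdot 3^a + 2^{m-b} q = 0$, so $3^a \mid 2^{m-b} q$; by coprimality of $2$ and $3$, $3^a \mid q$, and the normalization $0 \leq q < p = 3^a$ forces $q = 0$. Hence $G = 3^a \Z \times 2^b \Z$, and the index equation $3^a 2^b = 6^j$ together with unique factorization of $6^j$ yields $a = b = j$. The whole argument is just careful bookkeeping of $2$-adic and $3$-adic valuations; there is no substantial obstacle once Hermite normal form reduces $G$ to the two explicit generators $(p,0)$ and $(q,r)$, after which the coprimality of $2$ and $3$ does all the work.
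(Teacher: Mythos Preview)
Your proof is correct and follows essentially the same route as the paper's: both put $G$ into upper-triangular form with generators $(p,0)$ and $(q,r)$, then use the divisibility constraints from the inclusions together with coprimality of $2$ and $3$ to pin down $p$, $r$, and $q$. The paper derives this triangular form by hand (choosing the minimal $x$ with $(x,0)\in G$, then the minimal second coordinate, etc.) rather than invoking Hermite normal form, and uses the element $(3^m,2^m)$ rather than $(0,2^m)$ to force the off-diagonal entry to vanish, but these are cosmetic differences; your version is somewhat cleaner.
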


\begin{proof}
This result follows from investigating the form of the elements in $G$.  We begin by noting that $(3^m,0)\in 3^{m}\Z \times 2^{m}\Z \le G$.  Let $x$ be the smallest positive integer such that $(x,0)\in G$ and note $x\in 3^{\widetilde{m}}\Z$, i.e. $x = 3^{\widetilde{m}}z$ for $z\in \Z$.  We then have that the group generated by the two vectors $(3^m,0)$ and $(x,0)$ is a subgroup of $G$.  Looking just at the first coordinates of the elements in this subgroup, note that these coordinates must be generated by the greatest common divisor of $x$ and $3^m$, which thus will have the form $3^{k}$ for some integer $k$ (later, we will show $k=j$).  So we have $(3^k,0)\in G$ and by definition of $x$, it must in fact be that $x = 3^k$.

We next define $b\in \Z$ to be the smallest positive number such that there exists a nonnegative integer $a$ with $(a,b)\in G$.  We may assume $a<3^k$, else subtract a multiple of $(3^k,0)$ until it is so.

We claim that $G$ is generated by $(3^k,0)$ and $(a,b)$.  To show this, let $g\in G$ and write $g = (g_1, g_2)$.  If $g_2=0$, then $g_1$ must be a multiple of $3^k$ and thus $(g_1,g_2)$ is in the group generated by $(3^k,0)$ and $(a,b)$, as wanted.
So assume $g_2 \neq 0$.  Note that $g_2$ must be a multiple of $b$, else we can add to $(g_1, g_2)$ multiples of $(a,b)$ and $(0,3^k)$ until the second coordinate is between 0 and $b$ and the first coordinate is between $0$ and $3^k$, contradicting our definition of $b$.  So we have $g_2 = h b$ for $h\in \Z$.  
We can then say that $(g_1,g_2) - h(a,b) = (g_1-ha, 0) \in G$, which means $3^k$ must divide $g_1-ha$, or $g_1 = ha + c 3^k$ for some integer $c$.  We thus have that $g = (g_1, g_2) = (ha + c3^k, hb) = h(a,b) + c(3^k,0)$, as wanted.

This tells us that $G = \left( \begin{array}{cc}  3^k & a \\
                                       0  & b  \end{array}\right) \Z^2$.  What remains is to show is that $j = k$, $b=2^j$ and $a=0$.
                                       
To investigate the form of $b$, we note that since the index $[\mathbb{Z}^2:G]$ is the determinant of the matrix, it must be that $3^kb = 6^j$.  We thus have that $b = 2^j3^{j-k}$.  On the other hand, we know that $(0,2^m)\in 3^{m}\Z \times 2^{m}\Z \le G$ and thus there must be 
integers $p$ and $q$ such that $(0,2^m) = p(3^k,0) + q(a,b) = (p\, 3^k + q\, a, q\, b)$ or $2^m = q\, b$.  
Thus $b=2^j3^{j-k}$ must divide $2^m$.  This implies that $j=k$ and $b=2^j$, yielding
\[G = \left( \begin{array}{cc}  3^j & a \\
                                       0  & 2^j  \end{array}\right) \Z^2.\]
 
 Finally, we consider $a$.  Since $3^{m}\Z \times 2^{m}\Z \leq G$, we know that $(3^m, 2^m)\in G$.  Thus there must be integers $z$ and $w$ such that  
\[
(3^m, 2^m) = z(3^j,0) + w(a,2^j) = (z3^j+wa, \, w2^j).
\]
 This tells us that $2^m = w2^j$ and $3^m = z3^j+wa$.  The first says that $w$ is a power of 2; together with the second equality we then have that $a$ must be a multiple of $3$.
 But we know that $3^k=3^j$ divides $3^m$ and thus must divide $wa$, hence divide $a$ as well.  If $a$ is nonzero, it follows that $a = 3^l$ for some $l\ge k$.  But we assumed $a<3^k$, and thus we must have $a=0$, as wanted.
\end{proof}

With this technical result in hand, we can show that in some cases, a $\C$-speedup must have a very specific structure:

%
%

\begin{thm} \label{nospeedupnew}  For each $j$, define $G_j = 3^j\Z \times 2^j \Z$ and let $(X_\gothG, \sigma_\gothG)$ be the $\Z^2$-odometer given by $\gothG = \{G_1, G_2, G_3, ...\}$.  Let $\C = \{0,1,2,...\} \times \{0,1,2,...\} - \{(0,0)\}$.

Suppose $(X_\gothG, \S)$ is a bounded $\C$-speedup of $(X_\gothG, \sigma_\gothG)$ which is conjugate to $(X_\gothG, \sigma_\gothG)$.  Then the speedup is ``a product of two one-dimensional speedups'', meaning that $\S^{\e_1}$ is a speedup of $\sigma_\gothG^{\e_1}$  and $\S^{\e_2}$  is a speedup of $\sigma_\gothG^{\e_2}$.
\end{thm}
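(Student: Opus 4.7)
The plan is first to show that the subgroups $H_j$ that Theorem \ref{bddspeedupthm} attaches to $\S$ must equal the original $G_j = 3^j\Z \times 2^j\Z$, and then to exploit the cyclic structure of the $\S^{\e_i}$-action on atoms of the K-R partitions $\P_j$ to force each $\p_i$ onto a coordinate axis. Applying Theorem \ref{bddspeedupthm} produces a decreasing sequence $\gothH = \{H_j\}$ of finite-index subgroups of $\Z^2$ with $(X_\gothG, \S) \cong (X_\gothH, \sigma_\gothH)$, and the conjugacy hypothesis together with Theorem \ref{isomorphismtest}(1) yields
\[
H(\S) = H(\sigma_\gothG) = \bigcup_{j=1}^\infty G_j^* = \Z[\tfrac{1}{3}] \times \Z[\tfrac{1}{2}],
\]
so each dual lattice $H_j^*$ sits inside $\Z[\tfrac{1}{3}] \times \Z[\tfrac{1}{2}]$. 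Choose $K$ large enough that the bounded continuous functions $\p_1, \p_2$ are constant on each atom of $\P_K$; for every $j \geq K$ the map $\S^\v$ then permutes the $6^j$ atoms of $\P_j$, and since $\Z^2$ is abelian all point-stabilizers of this permutation action coincide with $H_j$, so minimality turns the action of $\Z^2/H_j$ into a simply transitive one and $[\Z^2 : H_j] = 6^j$. A direct Hermite-normal-form inspection of index-$6^j$ sublattices of $\Z^2$ whose duals lie in $\Z[\tfrac{1}{3}] \times \Z[\tfrac{1}{2}]$ then shows that the only possibility is $H_j = G_j$.

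With $H_j = G_j$ (for every $j \geq K$) we have $\p(x, \h) \in G_j$ for every $\h \in G_j$ and every $x$. Writing $\p_1 = (p_1^{(1)}, p_1^{(2)})$ and $\p_2 = (p_2^{(1)}, p_2^{(2)})$, the goal is $p_1^{(2)} \equiv 0$ and $p_2^{(1)} \equiv 0$. In $\Z^2/G_K \cong \Z/3^K\Z \times \Z/2^K\Z$ the element $\e_1$ has order exactly $3^K$, so $\S^{\e_1}$ permutes the atoms of $\P_K$ in cycles of length $3^K$; since $p_1^{(2)}$ is constant on atoms of $\P_K$, its values along any $\S^{\e_1}$-orbit repeat with period $3^K$. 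The cocycle relation gives, for every $m \geq K$ and every $x$,
\[
\sum_{k=0}^{3^m - 1} p_1^{(2)}\bigl(\S^{k\e_1}(x)\bigr) = 3^{m-K}\, s(x), \qquad s(x) := \sum_{k=0}^{3^K - 1} p_1^{(2)}\bigl(\S^{k\e_1}(x)\bigr) \geq 0,
\]
and $\p(x, (3^m, 0)) \in G_m$ forces $2^m$ to divide $3^{m-K} s(x)$. Since $\gcd(3^{m-K}, 2) = 1$, this gives $2^m \mid s(x)$ for every $m \geq K$, whence $s(x) = 0$; non-negativity of $p_1^{(2)}$ then yields $p_1^{(2)} \equiv 0$. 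The symmetric argument using $(0, 2^m) \in G_m$ and the order-$2^K$ orbits of $\S^{\e_2}$ on $\P_K$-atoms gives $p_2^{(1)} \equiv 0$. Because $\p_1, \p_2 \in \C$ are nonzero, we conclude $\p_1(x) = (p_1^{(1)}(x), 0)$ and $\p_2(x) = (0, p_2^{(2)}(x))$ with both remaining components at least $1$, so each $\S^{\e_i}$ is a speedup of $\sigma_\gothG^{\e_i}$.

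I anticipate the main obstacle to be this cyclic-sum step: the naive bound $\sum p_1^{(2)} \leq 3^m \cdot \max p_1^{(2)}$ is too coarse for divisibility by $2^m$ alone to force the sum to vanish (since $2 < 3$). Extracting the common factor $3^{m-K}$ via the exact order-$3^K$ structure of $\S^{\e_1}$ on $\P_K$-atoms is what allows the coprimality-to-$2$ argument to close the loop, and setting this up cleanly requires careful tracking of the stabilization level $K$ and of the simply transitive nature of the $\Z^2/H_j$-action on atoms.
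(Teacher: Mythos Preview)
Your argument is correct and follows the same overall architecture as the paper's proof: first pin down the stabilizer subgroups $H_j$ attached to $\S$ by Theorem~\ref{bddspeedupthm}, then combine the cocycle identity
\[
\p(x,3^m\e_1)=\sum_{k=0}^{3^m-1}\p_1(\S^{k\e_1}x)
\]
with a $2$-adic divisibility constraint and the nonnegativity of second coordinates in $\C$ to force $p_1^{(2)}\equiv 0$ (and symmetrically $p_2^{(1)}\equiv 0$).

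The difference lies in how you identify the $H_j$. The paper invokes Cortez' cofinality criterion (Lemma~1 of \cite{C}) to obtain a sandwich $3^{m_j}\Z\times 2^{m_j}\Z\leq H_j\leq 3^{\widetilde m_j}\Z\times 2^{\widetilde m_j}\Z$, and then appeals to the ad~hoc Lemma~\ref{groupstructure} to conclude $H_j=3^{k_j}\Z\times 2^{k_j}\Z$ for some $k_j$ whose exact value is never computed. You instead use Theorem~\ref{isomorphismtest}(1) to get $H(\S)=\Z[\tfrac13]\times\Z[\tfrac12]$, observe directly that $[\Z^2:H_j]=6^j$ from the simply transitive action on $\P_j$-atoms, and run a short Hermite-normal-form check to conclude $H_j=G_j$ exactly. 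This is sharper (you get $k_j=j$) and bypasses Lemma~\ref{groupstructure} entirely; the price is that it leans on the cohomological classification in Theorem~\ref{isomorphismtest}, whereas the paper's route stays closer to the elementary subgroup picture. Your periodicity device---extracting the factor $3^{m-K}$ from the cocycle sum so that coprimality to $2$ kills $s(x)$---is the same mechanism the paper uses when it iterates $\S^{(3^{k_J},0)}$ to force $b=0$, just packaged slightly more efficiently because you already know $H_m=G_m$.
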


\begin{proof}  Suppose $\S$ is a bounded $\C$-speedup of $\sigma_\gothG$ that is conjugate to $\sigma_\gothG$, and let $\p$ be its speedup cocycle.  We will prove the theorem by showing that $\p(x, \e_1) = p_1(x)\e_1$ and $\p(x, \e_2) = p_2(x)\e_2$ for functions $p_1, p_2 : X_\gothG \to \Z^+$. 

We begin by considering some of the structure of an odometer and its speedup.  For instance, let $\P_j$ be the sequence of K-R partitions for $(X_\gothG, \sigma_\gothG)$ as described in Theorem \ref{KRpartthm}. 
Just as was done in Theorem \ref{bddspeedupthm}, we can let $J$ be the smallest $j \geq 0$ such that for each $\v \in \Z^2$, $\p(x,\v)$ is constant on the atoms of $\P_J$ (and hence constant on all atoms of any $\mathcal{P}_j$ for any $j \geq J$).  Thus we can think of 
${ \sigma_\gothG}^{\p(\cdot,\v)} = \S^{\v}$ as 
simply permuting the $6^j$ elements of $\mathcal{P}_j$.

Also similar to what we did in Theorem \ref{bddspeedupthm},
 let $H_j = \{\v \in \Z^2 : \S^\v(E_j) = E_j\}$, where $E_j$ is the atom of $\P_j$ containing the zero element of $X_\gothG$.  Recall this yields a sequence of groups with  $H_J \geq H_{J+1} \geq ...$ which can be used to define (a conjugate version of) the odometer 
$(X_\gothG,\S)$.

Choose any $x \in X_\gothG$.  For each $j \geq J$, let $A_j$ be the atom of $\P_j$ containing $x$.  Observe that it follows from the justification of Claim 3 in the proof of Theorem \ref{bddspeedupthm} that for any $j \geq J$, 
\[H_j = \{\v \in \Z^2 : \S^\v(A_j) = A_j\}.\]

Next, since we are assuming $(X_\gothG,\S)$ and $(X_\gothG, \sigma_\gothG)$ are conjugate, we can use Lemma 1 of \cite{C}  to say that for each $j$, there exists $m_j$ and $\widetilde{m}_j$ such that 
\[
   3^{m_j}\Z \times 2^{m_j}\Z \leq H_j \leq 3^{\widetilde{m}_j}\Z \times 2^{\widetilde{m}_j}\Z. 
\]
By Lemma \ref{groupstructure}, we then know there is some $k_j\in\mathbb{N}$ such that $H_j = 3^{k_j}\Z \times 2^{k_j}\Z$.  In particular, this says that $(3^{k_J},0) \in H_J$ and so $\S^{(3^{k_J},0)}$ sends $A_J$  to itself.  We can rewrite this as saying that 
${ \sigma_\gothG}^{\p(\cdot,(3^{k_J},0))} $ sends $A_J$ to $A_J$.  But we said above that $\p(\cdot,(3^{k_J},0))$ is constant on $A_J$  and thus we can find a constant vector $(\alpha_J, \beta_J)\in \mathbb{Z}^2$ which equals $\p(\cdot,(3^{k_J}, x))$ when restricted to $A_J$.  That is, 
$\S^{(3^{k_J},0)} = { \sigma_\gothG}^{(\alpha_J, \beta_J)} $ when restricted to $A_J$.

Our goal is to show $\beta_J =0$, and our main tool will be the fact that by the definition of $\sigma_\gothG$, we know that for  all $j$,
\[
\{\v \in \Z^2 : \sigma_\gothG^\v(A_j) = A_j\} = 3^j\Z \times 2^j\Z.
\] 
We begin by noting that when $j=J$, the above tells us that $(\alpha_J, \beta_J) = (3^Ja, 2^Jb)$ for some nonnegative integers $a$ and $b$.

We next consider $j=J+1$.  We similarly find that $\S^{(3^{k_{J+1}},0)}$ sends $A_{J+1}$ to $A_{J+1}$. But for all points in $A_{J+1}\subset A_J$,
\begin{align*}
\S^{(3^{k_{J+1}},0)} & = \left(\, \S^{(3^{k_{J}},0)} \,\right)^{3^{k_{J+1}-k_J}} \\
& = \left(\, { \sigma_\gothG}^{(3^Ja, 2^Jb)} \,\right)^{3^{k_{J+1}-k_J}} \\
& = \sigma_\gothG^{(3^{k_{J+1}-k_J}3^Ja, 3^{k_{J+1}-k_J}2^Jb)}.
\end{align*}
We then know that $\sigma_\gothG^{(3^{k_{J+1}-k_J}3^Ja, 3^{k_{J+1}-k_J}2^Jb)}$ sends $A_{J+1}$ to $A_{J+1}$.  But we also know that the only vectors $\v \in \Z^2$ such that $\sigma_\gothG^{\v}$ sends $A_{J+1}$ to $A_{J+1}$ are those in $3^{J+1}\Z \times 2^{J+1}\Z$, and thus we have that 
\[
(3^{k_{J+1}-k_J}3^Ja, 3^{k_{J+1}-k_J}2^Jb) = (3^{J+1} m, 2^{J+1} n) \textrm{ for some }m, n \in \Z.
\]
In particular, this says that $3^{k_{J+1}-k_J}2^Jb = 2^{J+1} n$, implying that $b$ must be divisible by $2$.

We can then repeat this argument for each $j>J$.  In other words, we have $\S^{(3^{k_{j}},0)}$ sends $A_{j}$ to $A_{j}$ and, for points in $A_j$, we can rewrite  $\S^{(3^{k_{j}},0)}$ as 
\begin{align*}
\S^{(3^{k_{j}},0)} & = \left(\, \S^{(3^{k_{J}},0)} \,\right)^{3^{k_{j}-k_J}} \\
& = \left(\, { \sigma_\gothG}^{(3^Ja, 2^Jb)} \,\right)^{3^{k_{j}-k_J}} \\
& = \sigma_\gothG^{(3^{k_{j}-k_J}3^Ja, 3^{k_{j}-k_J}2^Jb)}.
\end{align*}
We then again use that the only vectors $\v \in \Z^2$ such that $\sigma_\gothG^{\v}$ sends  $A_j$ to $A_j$ must be of the form $3^{j}\Z \times 2^{j}\Z$ to conclude that
\[
(3^{k_{j}-k_J}3^Ja, 3^{k_{j}-k_J}2^Jb) = (3^{j} p, 2^{j} q) \textrm{ for some }p, q \in \Z.
\]  
In particular, this says that $3^{k_{j}-k_J}2^Jb = 2^{j} q$, implying that $b$ must be divisible by $2^{j-J}$.  As this holds for all  $j>J$, $b$ must be zero.  \\

We now proceed with proving $\p(x, \e_1) = (p_1(x),0)$ for every $x \in X_\gothG$.  Since $b = 0$, the vector  $(\alpha_J, \beta_J)$ found above actually can be written  $(\alpha_J,0)$ and  thus,
\begin{equation} \label{xyz}
\S^{(3^{k_J},0)}(x) = \sigma_\gothG^{(\alpha_J, \beta_J)}(x) = \sigma_\gothG^{(\alpha_J,0)}(x).
\end{equation}
We have now shown that for every $x\in X_\gothG$,   $\p(x, 3^{k_J}\e_1) = (\alpha_J,0)$ for some $\alpha_J = \alpha_J(x) \in \Z^+$.

Next, we see from the cocycle equation that
\begin{equation}\label{cocycleeqn3J}
\p(x, 3^{k_J}\e_1) 
 = \p(x,\e_1) + \p(\S^{\e_1}x, \e_1) + \p(\S^{2\e_1}x, \e_1) + ... + \p(\S^{(3^{k_J}-1)\e_1}x, \e_1).
\end{equation}
From the choice of $\C$, each vector in this sum has a second coordinate which is non-negative, so if any of these vectors have a positive second coordinate, then $\p(x, 3^{k_J}\e_1) = (\alpha_J,0)$ must also have a positive second coordinate, a contradiction.  In particular, this says that the second coordinate of $ \p(x,\e_1)$ must be zero and we can write 
 $\p(x, \e_1) = (p_1(x),0)$ for some function $p_1 : X_\gothG \to \Z^+$, as wanted.  \\
 
A similar argument shows that there is a function $p_2 : X_\gothG \to \Z^+$ such that $\p(x, \e_2) = (0,p_2(x))$ for all $x \in X_\gothG$.  In particular, we can write $\S^{(0,2^{k_J})}$ as $\sigma_\gothG^{(\gamma_J, \delta_J)}$ where $(\gamma_J, \delta_J) = (3^Jc, 2^Jd)$.  We can prove $c = 0$, and therefore $\gamma_J = 0$, by observing that whenever $j > J$, $\S^{(0,2^{k_j})}$ sends each atom of each $\P_j$ to itself, which implies $c$ is divisible by $3^{j-J}$ for all $j > J$.   \end{proof}

%
%

The following corollary then says that the form of the cone $\C$ can severely restrict what a bounded $\C$-speedup can be conjugate to.

\begin{cor} \label{nospeedup} For each $j$, define $G_j = 3^j\Z \times 2^j \Z$ and let $(X_\gothG, \sigma_\gothG)$ be the $\Z^2$-odometer given by $\gothG = \{G_1, G_2, G_3, ...\}$.  Let $\C \subseteq \Z^2$ be a cone.  If there is a bounded $\C$-speedup of $(X_\gothG,\sigma_\gothG)$ which is conjugate to  $(X_\gothG,\sigma_\gothG)$, then $\C$ must contain points along both coordinate axes.
\end{cor}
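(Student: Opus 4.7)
My plan is to deduce Corollary~\ref{nospeedup} almost immediately from the proof (not just the statement) of Theorem~\ref{nospeedupnew}. The key observation is that the portion of that proof which identifies $H_j = 3^{k_j}\Z \times 2^{k_j}\Z$ via Lemma~\ref{groupstructure} and then establishes $\p(x, 3^{k_J}\e_1) = (\alpha_J, 0)$ on the atom $A_J$ (and, symmetrically, $\p(x, 2^{k_J}\e_2) = (0, \delta_J)$ on $A_J$) uses only the conjugacy of $\S$ to $\sigma_\gothG$ together with the multiplicative structure of $G_j = 3^j\Z \times 2^j\Z$. Nowhere does it use the specific shape of the cone. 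So for the bounded $\C$-speedup $\S$ conjugate to $\sigma_\gothG$ hypothesized in the corollary, I may invoke both of these identities verbatim.

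With these in hand, the next step is to apply the cocycle relation to write $\p(x, 3^{k_J}\e_1) = \sum_{k=0}^{3^{k_J}-1} \p\bigl(\S^{k\e_1}(x), \e_1\bigr)$, noting that every summand lies in $\C$ by the $\C$-speedup hypothesis. Suppose, toward contradiction, that $\C$ contains no vector on the $x$-axis. The filled cone underlying $\C$ is an open connected sector whose boundary consists of two distinct lines through the origin, so the absence of $x$-axis lattice points forces this filled cone to lie entirely in either the open upper or the open lower half-plane. Consequently every vector in $\C$ has second coordinate of a single uniform sign and integer magnitude at least $1$, and the sum above must then have second coordinate of absolute value at least $3^{k_J} \geq 3$. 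This contradicts $\p(x, 3^{k_J}\e_1) = (\alpha_J, 0)$, and so $\C$ must contain a vector on the $x$-axis.

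An entirely symmetric argument, using $\p(x, 2^{k_J}\e_2) = (0, \delta_J)$ and the cocycle expansion into $2^{k_J}$ many $\e_2$-increments, forces $\C$ to contain a vector on the $y$-axis as well. The main work in this plan is not in either the sign/magnitude step or its symmetric copy, both of which are short; rather it is the bookkeeping check that the segments of Theorem~\ref{nospeedupnew} I wish to reuse are genuinely cone-independent. Once that is confirmed, the cone-dependent part reduces to the two-line half-plane dichotomy together with the trivial observation that a sum of nonzero vectors whose integer second coordinates all share a common sign cannot equal $(\alpha_J, 0)$.
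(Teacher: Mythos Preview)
Your proposal is correct and follows essentially the same approach as the paper's proof: both observe that the derivation of $\p(x,3^{k_J}\e_1)=(\alpha_J,0)$ (and its symmetric counterpart) in the proof of Theorem~\ref{nospeedupnew} is cone-independent, and then use the half-plane dichotomy for a cone missing a coordinate axis together with the cocycle expansion to obtain a contradiction. Your write-up adds a bit more detail on the cone geometry and the magnitude bound, but the argument is the same.
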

\begin{proof} Notice from the proof of Theorem \ref{nospeedupnew} that equation (\ref{xyz}) can be obtained irrespective of the cone $\C$, i.e. the speedup cocycle $\p$ defining $\S$ must satisfy $\p(x, 3^{k_J}\e_1) = (\alpha_J(x),0)$ for all $x \in X_\gothG$.  
If $\C$ contains no points along the horizontal axis, then by definition of it being a cone, it must lie entirely above or entirely below the horizontal axis.
Thus the second coordinate of  $\p(\cdot, \e_1)$ will always be positive or will always be negative.
By using equation (\ref{cocycleeqn3J}),  this in turn tells us that the second coordinate of $\p(x, 3^{k_J}\e_1)$ cannot be zero, contrary to what we said above.  Thus we know that $\C$ must indeed contain some points on the horizontal axis.
Similarly, we can obtain $\p(x, 2^{k_J}\e_2) = (0,\delta_J(x))$ for all $x \in X_\gothG$; since $\C$ contains the range of $\p(\cdot, \e_2)$, it must also contain points along the vertical axis.
\end{proof}

Thus for the odometer $\sigma_\gothG$ given by the groups $G_j = 3^j \Z \times 2^j \Z$,  there is substantial rigidity in the speedups of $\sigma_\gothG$ which are conjugate to $\sigma_\gothG$.  In the next example, we demonstrate that this rigidity does not always exist:

%
%

\begin{thm} \label{notrigid} Let $\mathcal{G} = \{G_n\}_{n=1}^\infty$, where $G_n = 2^n \Z \times 2^n\Z$.  Then, for any cone $\C \subseteq \Z^2$, there is a $\C$-speedup of the $\Z^2$-odometer $(X_{\mathcal{G}}, \sigma_{\mathcal{G}})$ which is conjugate to $(X_{\mathcal{G}}, \sigma_{\mathcal{G}})$.
\end{thm}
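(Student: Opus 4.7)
The plan is to build $\S$ using the simplest possible speedup cocycle, one that is \emph{constant} in $x$. Given a cone $\C \subseteq \Z^2$, I will find two lattice vectors $\v_1, \v_2 \in \C$ that form a $\Z$-basis of $\Z^2$, set $A := [\v_1 \mid \v_2] \in GL_2(\Z)$, and define $\S$ by $\S^\v(x) := \sigma_\mathcal{G}^{A\v}(x)$. The associated cocycle $\p(x,\v) = A\v$ is continuous (hence bounded) and satisfies $\p(x, \e_j) = \v_j \in \C$, so $\S$ is a bounded $\C$-speedup of $\sigma_\mathcal{G}$. Because the columns of $A$ generate $\Z^2$, the $\S$-orbits coincide with the $\sigma_\mathcal{G}$-orbits, so $\S$ is minimal and (since $A$ is invertible) free.

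The only step requiring a geometric argument is producing $\v_1, \v_2 \in \C$ with $\det[\v_1 \mid \v_2] = \pm 1$. Writing $\C = W \cap (\Z^2 \setminus \{\0\})$ for an open angular sector $W$, primitive lattice vectors are dense in direction, so we may pick a primitive $\v_1 \in \C$; since $W$ is open, $\v_1$ lies in the interior of $W$. B\'ezout supplies $\w_0 \in \Z^2$ with $\det(\v_1, \w_0) = 1$, and every element of the coset $\w_0 + \Z\v_1$ also has determinant $1$ with $\v_1$. Geometrically this coset lies on the affine line through $\w_0$ parallel to $\v_1$, which is offset from the line $\R\v_1$ by perpendicular distance $1/\|\v_1\|$. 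As $t \to +\infty$ the angle between $\w_0 + t\v_1$ and $\v_1$ tends to $0$, so for all sufficiently large $t$ the lattice point $\v_2 := \w_0 + t\v_1$ lies in $W$, giving the desired basis in $\C$.

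To confirm $(X_\mathcal{G}, \S)$ is conjugate to $(X_\mathcal{G}, \sigma_\mathcal{G})$, I will apply Theorem \ref{bddspeedupthm}. Using the sequence of K-R partitions $\{\mathcal{P}_j\}$ from Theorem \ref{KRpartthm} and letting $E_j$ denote the atom of $\mathcal{P}_j$ containing $0$, the Cortez subgroup sequence for $\S$ is
\[
H_j = \{\h \in \Z^2 : \S^\h(E_j) = E_j\} = \{\h \in \Z^2 : A\h \in G_j\} = A^{-1}(2^j\Z^2) = 2^j\Z^2 = G_j,
\]
where the penultimate equality uses $A \in GL_2(\Z)$. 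Since the Cortez sequences $\{H_j\}$ and $\{G_j\}$ agree, Theorem \ref{bddspeedupthm} yields a conjugacy between $(X_\mathcal{G}, \S)$ and $(X_\mathcal{G}, \sigma_\mathcal{G})$; equivalently, both first cohomology groups equal $\Z[\tfrac12]^2$, so Theorem \ref{isomorphismtest}(1) applies. The only real obstacle is the lattice-geometry step producing $\v_1, \v_2$ in a potentially very narrow cone; everything afterwards is driven by the $GL_2(\Z)$-invariance of the sequence $\{G_j\} = \{2^j\Z^2\}$, which is exactly the symmetry conspicuously absent from the rigid example of Theorem \ref{nospeedupnew}.
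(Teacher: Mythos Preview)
Your proof is correct and takes a genuinely cleaner route than the paper's. The paper also uses a constant cocycle $\p(x,\v)=A\v$, but with $A=[\tilde\p\mid\tilde\p+(0,1)]$ where $\tilde\p=(p_1,p_2)\in\C$ has $p_1$ odd; this $A$ has determinant $p_1$, which need not be $\pm 1$. Because of that, the paper must verify minimality of $\S$ by a direct computation, then prove by induction that the resulting Cortez subgroups $G'_n$ all have the form $2^{a_n}\Z\times 2^{b_n}\Z$ with $a_n,b_n\to\infty$, and finally invoke Theorem~\ref{isomorphismtest}(1) after checking both first cohomology groups equal $\Z[\tfrac12]^2$. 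By insisting instead on $A\in GL_2(\Z)$---at the cost of the short lattice-geometry step producing two cone vectors with determinant $\pm 1$---you get $\S$-orbits equal to $\sigma_\mathcal{G}$-orbits (so minimality and freeness are immediate) and the one-line computation $H_j=A^{-1}(2^j\Z^2)=2^j\Z^2=G_j$. Your emphasis on the $GL_2(\Z)$-invariance of the sequence $\{2^j\Z^2\}$ is exactly the mechanism behind the shortcut and makes transparent the contrast with the rigid $3^j\Z\times 2^j\Z$ example of Theorem~\ref{nospeedupnew}. The paper's approach, by allowing any odd determinant, incidentally shows a slightly broader phenomenon (non-unimodular constant cocycles also yield conjugate speedups here), but for the stated theorem your argument is shorter and more conceptual.
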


\begin{proof} Given $\C \subseteq \Z^2$, choose an integer vector $\tilde{\p} = (p_1, p_2)$ such that $\tilde{\p} \in \C$, $\tilde{\p} + (0,1) \in \C$, and $p_{1}$ is odd.  We use this $\tilde{\p}$ to define the speedup of $(X_{\mathcal{G}}, \sigma_{\mathcal{G}})$:  define the cocycle 
$\p(x, (v_1, v_2)) = v_1\tilde{\p} + v_2(\tilde{\p} + (0,1))$ for all $x \in X_{\mathcal{G}}$, and let  $(X_\mathcal{G}, \S)$ be the $\C$-speedup of $(X_\mathcal{G}, \sigma_\mathcal{G})$ given by cocycle $\p$. \\

\noindent \emph{Claim 1:}  $\S$ is minimal. 

Consider the refining and generating sequence of partitions $\P_n$ described in Theorem \ref{KRpartthm}. We will show that the $\S$-orbit of the identity element $0\in X_{\mathcal{G}} $ intersects every atom of $\P_n$ for every $n$.  So fix $n$ and an atom of $\P_n$: such an atom must be the inverse image of some coset $(u_1, u_2) + G_n$ under the map $\pi_n$, as defined in Section \ref{odometers}.
We will know the orbit of $0$ intersects this atom if we can find $\v\in\Z^2$ such that the $n^{th}$ coordinate of $\S^{\v}(0)$ equals $(u_1, u_2) $ mod $G_n = 2^n \Z \times 2^n\Z$.

Since $p_1$ is odd, we can find an integer $m$ such that $m p_1 = u_1$ mod $2^n$.  With this $m$ fixed, we can then pick an integer $k$ such that
$(m p_2 + m) +k = u_2$ mod $2^n$.  Now let $v_1 = -k$ and $v_2 = k+m$.  Then
\begin{align*}
\S^{(v_1,v_2)}(0) & =  \sigma_{\mathcal{G}}^{\p(0, (v_1, v_2))}(0) \\
& = (v_1\tilde{\p} + v_2(\tilde{\p} + (0,1)) \textrm{ mod }G_1, v_1\tilde{\p} + v_2(\tilde{\p} + (0,1))   \textrm{ mod }G_2, ...).
\end{align*}
The $n^{th}$ coordinate is thus  
\begin{align*}
v_1\tilde{\p} + v_2(\tilde{\p} + (0,1)) & =  (v_1 p_1 + v_2 p_1, v_1 p_2 + v_2 (p_2+1) ) \\
                                                         & =  (m p_1, k + m p_2 + m ) \\
                                                         & = (u_1, u_2) \,\,{\rm{ mod }}\,\, 2^n \Z \times 2^n\Z,
\end{align*} 
as wanted.

Because the partitions $\P_n$  generate the topology of $X_\mathcal{G}$, the above tells us that the $\S$-orbit of $0$ must be dense in $X_{\mathcal{G}}$.
Since the $\S$-orbit of any $x \in X_\mathcal{G}$ is just the $\S$-orbit of $0$ translated by $x$, it follows that every $\S$-orbit is dense, meaning $\S$ is minimal. \\

By Theorem \ref{bddspeedupthm}, $(X_\mathcal{G}, \S)$ is therefore a $\Z^2$-odometer and is thus associated to some decreasing sequence $\{G'_n\}$ of finite-index subgroups of $\Z^2$.  One such sequence can be obtained by following the construction in the proof of Theorem \ref{bddspeedupthm}, where we found that   $G'_n = \{\h \in \Z^2 : \S^\h(E_n) = E_n\}$, where $E_n$ is the atom of $\P_n$ containing $0$.\\

\noindent \emph{Claim 2:} For every $n$ there are positive integers $a_n$ and $b_n$ such that $G'_n = 2^{a_n}\Z \times 2^{b_n}\Z$.\\
We prove this by induction.  Consider $G'_1 = \{\h \in \Z^2 : \S^\h(E_1) = E_1\}$.  Note that $(2,0)\in G'_1$ because 
$$S^{(2,0)} =  \sigma_{\mathcal{G}}^{\p(\cdot,(2,0))} =   \sigma_{\mathcal{G}}^{2\tilde{\p}} =  \sigma_{\mathcal{G}}^{(2p_1,2p_2)}$$ and 
$(2p_1,2p_2) = (0,0)$ mod $2 \Z \times 2\Z$.  
Similarly we have $(0,2)\in G'_1$ and thus $2 \Z \times 2\Z \subseteq G'_1$.
If they are not equal, we can find $(a,b) \in G'_1$ with at least one of $a,b$ odd.  By subtracting off elements from $2 \Z \times 2\Z$ we can then find $(\tilde{a},\tilde{b})  \in G'_1$ with one or both of $\tilde{a},\tilde{b}$ equal to 1.
But note that $(1,0)\notin G'_1$, since 
$$S^{(1,0)} =  \sigma_{\mathcal{G}}^{\p(\cdot,(1,0))} =  \sigma_{\mathcal{G}}^{\tilde{\p}} =  \sigma_{\mathcal{G}}^{(p_1,p_2)}$$ and $p_1$ is odd. We similarly see that $(0,1)\notin G'_1$.  This leaves $(1,1)$ as the only possibility, yet
$$S^{(1,1)} =  \sigma_{\mathcal{G}}^{\p(\cdot,(1,1))} =  \sigma_{\mathcal{G}}^{2\tilde{\p}+(0,1)} = 
 \sigma_{\mathcal{G}}^{(2p_1,2p_2+1)}.$$  But $2p_2+1\neq 0 \, \mod 2$, and thus $ \sigma_{\mathcal{G}}^{(2p_1,2p_2+1)}$ cannot send $E_1$ to $E_1$. \\
 
 Now assume $G'_n = 2^{a_n}\Z \times 2^{b_n}\Z$ for nonnegative integers $a_n$ and $b_n$.  We will in fact show that 
$G'_{n+1} = 2^{a_{n+1}}\Z \times 2^{b_{n+1}}\Z$, where $a_{n+1} \in \{a_n, a_n + 1\}$ and $b_n \in \{b_n, b_n + 1\}$.  To do so, it suffices to show that if 
$(w_1, w_2) \in G'_n$, then  $(2w_1, 2w_2) \in G'_{n+1}$.  Toward that end, suppose $(w_1, w_2) \in G'_n$.  Then 
\begin{align*}
\S^{(w_1,w_2)}(0)  = \sigma_\gothG^{  w_1{\tilde{\p} } + w_2(  {\tilde{\p}}+(0,1) )  }(0) \in E_n.
\end{align*}
Note that when the partition $\P_n$ is refined into $\P_{n+1}$, the atom $E_n$ of $\P_n$ is subdivided into four atoms of $\P_{n+1}$, namely $\pi^{-1}((0,0) + G_{n+1})$, $\pi^{-1}((2^{n},0) +G_{n+1})$, $\pi^{-1}((0,2^{n})+G_{n+1})$ and $\pi^{-1}((2^{n}, 2^{n}) + G_{n+1})$.  Thus $\S^{(w_1,w_2)}(0)$ lies in exactly one of these four atoms.  No matter which of these atoms contains $\S^{(w_1,w_2)}(0)$, it must be the case that $\S^{(2w_1,2w_2)}(0) \in \pi^{-1}((0,0) + G_{n+1}) = E_{n+1}$, making $(2w_1, 2w_2) \in G'_{n+1}$ as wanted.  \\

At this point, we have $G'_n = 2^{a_n} \Z \times 2^{b_n}\Z$ for sequences $\{a_n\}, \{b_n\}$ of integers satisfying $a_1 = b_1 = 1$ and $a_{n+1} - a_n \in \{0,1\}$, $b_{n+1}-b_n \in \{0,1\}$ for all $n$.  \\

\noindent \emph{Claim 3:}   The sequences $a_n \to \infty$ and $b_n \to \infty$. 

We prove this by contradiction.  So assume one of these, say $a_n$, does not diverge to infinity. That means there is $N$ such that $a_n = a_N$ for all $n \geq N$.  So for all $n \geq N$, $G'_n = 2^{a_N}\Z \times 2^{b_n}\Z$, which implies $(2^{a_N},0) \in G'_n$.  By the definition of $G'_n$, this means
\[
\S^{(2^{a_N},0)} (0) = \sigma_{\mathcal{G}}^{2^{a_N}\tilde{\p}}(0) \in E_n
\]
for all $n \geq N$, and it follows that $\sigma_{\mathcal{G}}^{  2^{a_N}\tilde{\p}  }(0) = 0$.  This contradicts the freeness of $\sigma_{\mathcal{G}}$.  Therefore $a_n \to \infty$, and a similar proof shows $b_n \to \infty$. \\

Since $(X_\mathcal{G}, \S)$ is conjugate to $(X_{\mathcal{G}'}, \sigma_{\mathcal{G}'})$, where $\mathcal{G}' = \{G'_1, G'_2, ...\}$ and each $G'_n$ is of the form $ 2^{a_n}\Z \times 2^{b_n}\Z$ with $a_n, b_n \to \infty$, we see that
\[
H(\S) = \bigcup_{n=1}^\infty (G'_n)^* = \Z\left[\frac 12\right] \times \Z\left[\frac 12\right] = H(\sigma_{\mathcal{G}}).
\]
By Theorem \ref{isomorphismtest}, $(X_{\mathcal{G}}, \S)$ is conjugate to $(X_\mathcal{G}, \sigma_\mathcal{G})$.
\end{proof}

Theorems \ref{nospeedupnew} and \ref{notrigid}  show a role that the particular structure of the speedup and the choice of cone $\C$ play in the theory of bounded $\C$-speedups of higher-dimensional odometers.  In dimension 1, a bounded minimal speedup of an odometer is a conjugate odometer \cite{AAO}.  Furthermore, there are only two cones in $\Z$, namely the positive integers and negative integers.  Since any $\Z$-odometer $\sigma_G$ is conjugate to its inverse $\sigma_\gothG^{-1}$,  the structure of speedups whose cocycle takes values in the positive cone is identical to the structure of speedups whose cocycle takes negative values.

But in higher dimensions, there are many cones one could be interested in, and depending on the odometer being sped up, the choice of cone $\C$ can play a substantial role in the structure of its bounded, minimal $\C$-speedups. For example, while any resulting speedup must be an odometer, no matter the cone (see Theorem \ref{bddspeedupthm}), whether or not said speedup can be conjugate to the original may be completely independent of $\C$, exemplified in Theorem \ref{notrigid}, or highly dependent on the choice of $\C$, as demonstrated in Corollary \ref{nospeedup}.

\bibliographystyle{amsplain}

\begin{thebibliography}{13}

\bibitem[AAO]{AAO} L. Alvin, D. Ash and N. Ormes.  Bounded topological speedups.  \emph{Dyn. Syst.} \textbf{33} (2018), 303-331.

\bibitem[A]{Ash} D. Ash.  Topological speedups.  Preprint, available at \verb"arXiv:1605.08446".

\bibitem[AOW]{AOW} P. Arnoux, D. Ornstein and B. Weiss.  Cutting and
stacking, interval exchanges and geometric models.  \emph{Isr. J.
Math.} \textbf{50} (1985), 160-168.

\bibitem[BBF]{BBF} A. Babichev, R. Burton and A. Fieldsteel.  Speedups of 
ergodic group extensions.  \emph{Ergodic Theory and Dyn. Sys.} \textbf{33} (2013), 969-982.



\bibitem[BT]{BT} M. Boyle and J. Tomiyama.  Bounded topological orbit equivalence and $C^*-$algebras.   \emph{J. Math. Soc. Japan} \textbf{50} (1998), 317-329.


\bibitem[Cor]{C} M.I. Cortez.  $\Z^d$ Toeplitz arrays.  \emph{Disc. Cont. Dyn. Sys.} \textbf{15} (2006), 859-881.

\bibitem[CM]{CM} M.I. Cortez and K. Medynets.  On virtual conjugacy of generalized odometers.  \emph{J. London Math. Soc. (2)} \textbf{94} (2016) 545-556.

\bibitem[CP]{CP} M.I. Cortez and S. Petite.  $G$-odometers and their almost $1-1$ extensions.  \emph{J. London Math. Soc.} \textbf{78} (2008), 1-20.


\bibitem[D]{Dow} T. Downarowicz.  Survey of odometers and Toeplitz flows.  In S. Kolyada, Y. Manin and T. Ward (Eds.) \emph{Algebraic and Topological Dynamics} Contemporary Math. \textbf{385} (2005), 7-37.

\bibitem[Du]{D} F. Durand.  Combinatorics on Bratteli diagrams and dynamical systems.  \emph{Combinatorics, automata and number theory}, 324-372, Encyclopedia Math. Appl. 135.  Cambridge Univ. Press, Cambridge, 2010.

 \bibitem[D1]{Dye} H.A. Dye.  On groups of measure preserving transformations I.  \emph{Amer. J. Math.} \textbf{81} (1959), 119-159.

 \bibitem[D2]{Dye2} H.A. Dye.  On groups of measure preserving transformations II. \emph{Amer. J. Math.} \textbf{85} (1963), 551-576.

\bibitem[Ef]{Ef} E.G. Effros.  \emph{Dimensions and $C^*$-algebras.}  CMBS Regional Conference Series in Mathematics 46.  \emph{Conference Board of the Mathematical Sciences} Amer. Math. Soc., Providence, RI, 1981.

\bibitem[E]{E} G. Elliott.  On the classification of inductive limits of sequences of semi-simple finite dimensional algebras.  \emph{J. Algebra} \textbf{38} (1976), 29-44.

\bibitem[EHS]{EHS} E. Effros, D. Handelman, and C.-L. Shen.  Dimension groups and their affine representations. \emph{Amer. J. Math.} \textbf{102} (1980), 385-407.


\bibitem[For]{For} A.Forrest. A  Bratteli diagram for commuting homeomorphisms of the Cantor set.  \emph{Internat. J. Math.} \textbf{11} (2000), 177-200.

\bibitem[FH]{FH} A. Forrest and J. Hunton.  The cohomology and $K$-theory of commuting homeomorphisms of the Cantor set.  \emph{Ergod. Th. and Dynam. Sys.} \textbf{19} (1999), 611-625.



\bibitem[GMPS]{GMPS} T. Giordano, H. Matui, I. Putnam and C. Skau.  Orbit equivalence for Cantor minimal $\Z^d$-systems.  \emph{Invent. Math.} \textbf{179} (2010), 119-158. 

\bibitem[GPS]{GPS} T. Giordano, I. Putnam and C. Skau.  Topological orbit equivalence and $C^*$-crossed products.  \emph{J. Reine Angew. Math.} \textbf{469} (1995) 51-111.

\bibitem[GPS2]{GPS2} T. Giordano, I. Putnam and C. Skau.  The orbit structure of Cantor minimal $\Z^2$-systems.  \emph{Operator Algebras:  The Abel Symposium 2004}, 145-160, Abel Symp., 1, Springer, Berlin, 2006.

\bibitem[GPS3]{GPS3} T. Giordano, I. Putnam and C. Skau. $\Z^d$-odometers and cohomology.  \emph{Groups, Geometry and Dynamics} \textbf{13} (2019), 909-938.


\bibitem[HPS]{HPS} R. Herman, I. Putnam and C. Skau.  Ordered Bratteli diagrams, dimension groups and topological dynamics.   \emph{Internat. J. Math.} \textbf{3} (1992), 827-864.


\bibitem[JM1]{JM1} A.S.A. Johnson and D. McClendon.  Speedups of ergodic group extensions of $\mathbb{Z}^d$-actions.  \emph{Dyn. Syst.} \textbf{29} (2014), 255-284.

\bibitem[JM2]{JM2} A.S.A. Johnson and D. McClendon.  Speedups and orbit equivalence of finite extensions of ergodic $\mathbb{Z}^d$-actions.  \emph{New York J. Math.} \textbf{21} (2015), 1371-1387.

\bibitem[L]{L} X. Li.  Orbit equivalence rigidity.  \emph{Ergodic Theory Dynam. Systems} \textbf{38} (2018), 1543-1563.



 \bibitem[N1]{N} J. Neveu.  Temps d'arr\^{e}t d'un syst\`{e}m dynamique. [Stopping times of a dynamical system.]  \emph{Z. Wahrscheinlichkeitstheor. Verw. Geb.} \textbf{13} (1969), 81-94.
 
 \bibitem[N2]{Neveu} J. Neveu.  Une d\'{e}monstration simplif\'{e}e et une extension de la formule d'Abramov sur l'entropie des tranformations induites. [A simple proof and extension of the Abramov formula for entropy of induced transformations.] \emph{Z. Wahrscheinlichkeitstheor. Verw. Geb.} \textbf{13} (1969), 135-140.





\end{thebibliography}

\end{document}